\setlist[enumerate]{label={\upshape(\arabic*)}}
\theoremstyle{definition}
\newtheorem{defn}{Definition}[section]
\newtheorem{prop}[defn]{Proposition}
\newtheorem{thm}[defn]{Theorem}
\newtheorem{lem}[defn]{Lemma}
\newtheorem{cor}[defn]{Corollary}
\newtheorem{rmk}[defn]{Remark}
\newtheorem{ex}[defn]{Example}
\newtheorem{question}[defn]{Question}
\newcommand{\C}{\mathbb{C}}
\newcommand{\R}{\mathbb{R}}
\newcommand{\Z}{\mathbb{Z}}
\DeclareMathOperator{\MV}{MV}
\DeclareMathOperator{\rk}{rk}
\DeclareMathOperator{\interior}{int}
\DeclareMathOperator{\intr}{int}
\DeclareMathOperator{\conv}{conv}
\DeclareMathOperator{\Gor}{Gor}
\DeclareMathOperator{\Cay}{Cay}
\DeclareMathOperator{\codeg}{codeg}
\DeclareMathOperator{\link}{link}
\def\vol{{\rm vol}}
\newcommand{\pro}[2]{\langle #1, #2 \rangle}
\def\lora{\Longrightarrow}
\title{Thin polytopes: Lattice polytopes with vanishing local $h^*$-polynomial}
\author{Christopher Borger}
\author{Andreas Kretschmer}
\author{Benjamin Nill}
\address{Faculty of Mathematics, Otto-von-Guericke-Universit\"at Magdeburg, Universit\"atsplatz 2, 39106 Magdeburg, Germany. Emails: andreas.kretschmer@ovgu.de, benjamin.nill@ovgu.de}
\subjclass{52B20 (primary); 14M25 (secondary)}
\keywords{Local $h^*$-polynomial, $\tilde{S}$-polynomial, box polynomial, Newton number, stringy $E$-polynomial, Ehrhart polynomial, thin simplices, lattice polytopes, Gorenstein polytopes}
\begin{document}
\maketitle
\setlength{\parindent}{0pt}
\begin{abstract}
In this paper we study the novel notion of thin polytopes: lattice polytopes whose local $h^*$-polynomials vanish. The local $h^*$-polynomial is an important invariant in modern Ehrhart theory. Its definition goes back to Stanley with 
fundamental results achieved by Karu, Borisov \& Mavlyutov, Schepers, and Katz \& Stapledon. The study of thin simplices was originally 
proposed by Gelfand, Kapranov and Zelevinsky, where in this case the local $h^*$-polynomial simply equals its so-called box polynomial. Our main results are the complete classification of thin polytopes up to dimension 3 and the characterization of thinness for Gorenstein polytopes. The paper also includes an introduction to the local $h^*$-polynomial with a survey of previous results.
\end{abstract}

\section{Introduction}

In this paper we propose to investigate {\em thin polytopes}: lattice polytopes with vanishing local $h^*$-polynomials. Local $h^*$-polynomials are also called $\ell^*$-polynomials or $\tilde{S}$-polynomials. In the case of lattice simplices, they equal the so-called box polynomial, see Example~\ref{ex:simplex}. Thin simplices were first defined in the context of regular $A$-determinants and $A$-discri\-minants by Gelfand, Kapranov and Zelevinsky \cite[11.4.B]{GKZ94} as those lattice simplices whose Newton numbers are zero, see Remark~\ref{rem:gkz}. As has been noted in \cite{GKZ94}, ``a classification of thin lattice simplices seems to be an interesting problem in the geometry of numbers.'' In this paper, we extend this endeavor to thin lattice polytopes, which we throughout refer to for simplicity as thin polytopes. Our main results are a complete classification of thin polytopes up to dimension $3$ (Theorem~\ref{thm:3d}) and a characterization of thin Gorenstein polytopes in any dimension (Theorem~\ref{thm:main}). The latter relies crucially on a recent non-negativity result by Katz and Stapledon \cite[Theorem~6.1]{Katz2016Local}. As a consequence, we solve the original problem of \cite{GKZ94} in these two cases and answer questions posed by Borisov, Schepers and the last named author that came up in the investigation of stringy $E$-polynomials of Gorenstein polytopes.
\smallskip

We also hope that this paper leads to renewed interest in the study of the local $h^*$-polynomial as a fundamental invariant of a lattice polytope with many fruitful connections as pioneered in the work of Stanley \cite{Stanley1992Subdivisions}, Karu \cite{Karu08}, Batyrev, Borisov, Mavlyutov \cite{batyrev2008combinatorial, BorisovMavlyutov}, Schepers \cite{Schepers2012Stringy, NS13}, and Katz, Stapledon \cite{Katz2016Local}.

\smallskip

Let us give an overview of this paper. In Section~\ref{sec:local} we give a comprehensive survey on the local $h^*$-polynomial of a lattice polytope. In Section~\ref{sec:thin} we define thin polytopes, present the main examples and discuss several open questions (e.g., Question~\ref{question}). Section~\ref{sec:3d} contains the complete classification of three-dimensional thin polytopes. In particular, we prove that three-dimensional lattice simplices are thin if and only if they are lattice pyramids (Corollary~\ref{cor:tetra}). Section~\ref{sec:gorst} presents the characterization of thin Gorenstein polytopes (Theorem~\ref{thm:main}). In particular, we deduce that thin Gorenstein polytopes have lattice width $1$ (Corollary~\ref{cor:flat-gorst}), being thin is invariant under the duality of Gorenstein polytopes (Corollary~\ref{cor:dual}) and show that Gorenstein simplices are thin if and only if they are lattice pyramids (Corollary~\ref{cor:gorsimplex}). For these results, we study in Section~\ref{sec:join} the behavior of local $h^*$-polynomials under joins, particularly for Gorenstein polytopes.

\section*{Acknowledgment} 
Our deepest thanks go to Jan Schepers whose notes and insights during and after the collaboration with the third author on \cite{NS13} contained in particular Lemma~\ref{lem:scheperscons} and were the basis of the proof of Theorem~\ref{thm:main}. We thank Lev Borisov for his comments, questions and his kindness to share the proof of Proposition~\ref{prop:borisov}. We are also grateful for Sam Payne and Liam Solus for their interest. We thank the anonymous referees for their careful reading and very useful suggestions. All the authors have been supported by the Research Training Group Mathematical Complexity Reduction funded by the Deutsche Forschungsgemeinschaft (DFG, German Research Foundation) - 314838170, GRK 2297 MathCoRe.

\section{A primer on the local $h^*$-polynomial of a lattice polytope}
\label{sec:local}

As the local $h^*$-polynomial is still not as well known in Ehrhart theory as the $h^*$-polynomial and also has been studied with different names and notations, we will give a slightly more thorough account on previous research than strictly necessary for the mere purpose of the results of this paper.

\subsection{Toric $g$- and $h$-polynomials of lower Eulerian posets}

In \cite{Stanley1987Generalized}, Stanley generalized the notion of $h$-vectors of simplicial complexes and simplicial polytopes significantly. For this, let us recall some basic terminology.

\begin{defn}
    The dual of a finite poset $\mathcal{P}$ is denoted $\mathcal{P}^\ast$. A finite poset $\mathcal{P}$ is \emph{locally graded} if every inclusion-maximal chain in every interval $[x,y]$ has the same length $r(x,y)$. The \emph{rank} $\rk(\mathcal{P})$ is the length of the longest chain in $\mathcal{P}$. If in addition there exists a rank function $\rho: \mathcal{P} \rightarrow \Z$, i.e., $r(x,y) = \rho(y) - \rho(x)$ for every interval $[x,y]$, then $\mathcal{P}$ is called \emph{ranked}. If $\mathcal{P}$ is ranked and every interval $[x,y]$ with $x \neq y$ has the same number of even rank and odd rank elements, then $\mathcal{P}$ is \emph{locally Eulerian}. If $\mathcal{P}$ is locally Eulerian and contains a minimal element $\hat{0}$, then it is called \emph{lower Eulerian}. If it also contains a maximum $\hat{1}$, then $\mathcal{P}$ is called \emph{Eulerian}. In presence of a minimum $\hat{0}$ in a ranked poset $\mathcal{P}$, we will always assume that the rank function satisfies $\rho(\hat{0}) = 0$.
\end{defn}

Here is the definition of the $g$-polynomial and the $h$-polynomial for lower Eulerian posets according to Stanley \cite{Stanley1987Generalized}.

\begin{defn}
    Let $\mathcal{P}$ be a lower Eulerian poset with rank function $\rho$ and rank $d$. We define the \emph{$g$-polynomial $g_\mathcal{P}(t)$} and the \emph{$h$-polynomial $h_\mathcal{P}(t)$} recursively by introducing a third polynomial $f_\mathcal{P}(t)$ as an intermediate step. Let
    \begin{gather*}
        f_\emptyset(t) = g_\emptyset(t) = h_\emptyset(t) = 1
        \end{gather*}
        and if $\mathcal{P}\not=\emptyset$, we set 
        \begin{gather*}
        f_\mathcal{P}(t) = \sum_{x \in \mathcal{P}} (t-1)^{d - \rho(x)} g_{[\hat{0},x)}(t)
        \end{gather*}
        and define for $f_{\mathcal{P}}(t)=\sum_{i=0}^df_i t^i$,
        \begin{gather*}
        g_\mathcal{P}(t) =  \sum_{i=0}^{\lfloor d/2 \rfloor} (f_i-f_{i-1}) t^i, \text{ and }\\
        h_\mathcal{P}(t) = \sum_{i=0}^d f_{d-i} t^i.
    \end{gather*}
    \label{def:fgh}
\end{defn}

Hence, $h_\mathcal{P}(t)$ is a polynomial with constant term $1$ of degree $\leq d$, and $g_\mathcal{P}(t)$ is a polynomial with degree $\le d/2$.\\

\begin{rmk} If for $x \in \mathcal{P}$, the interval $[\hat{0},x]$ is boolean, then $g_{[\hat{0},x)}(t) = 1$, see \cite[Prop.~2.1]{Stanley1987Generalized}.\label{simplicial}
\end{rmk}

\begin{rmk} Let us recall the situation of simplicial complexes $\Delta$ (see \cite{Stanley1987Generalized}), where the previous definition of the $h$-polynomial agrees with the usual one. For this, we identify $\Delta$ with its face poset which is a lower Eulerian poset with minimum $\emptyset \in \Delta$. Throughout, we use the convention that $\dim(\emptyset) = -1$. It follows from Remark~\ref{simplicial} that $g_{[\emptyset,\sigma)}(t) = 1$ for all faces $\sigma$ of $\Delta$. If $\Delta$ has dimension $d-1$, we get $f_\Delta(t) = \sum_{i=0}^d f_{i-1} (t-1)^{d-i}$, where $f_j$ denotes the number of faces of $\Delta$ of dimension $j$. Hence, this implies \[h_\Delta(t) = \sum_{\sigma \in \Delta} t^{\dim(\sigma)+1} (1-t)^{d-1-\dim(\sigma)},\] which indeed equals the usual $h$-polynomial of $\Delta$, and where its coefficients form the usual $h$-vector of $\Delta$ \cite[p.~199]{Stanley1987Generalized}. For instance, if $\Delta$ is the boundary complex of a $d$-dimensional simplex, then $h_\Delta(t)=1+t+\cdots+t^d$. Let us also give one example to illustrate that the previous formula for $h_\Delta(t)$ fails in the non-simplicial situation. Let $P$ be the pyramid over the square. In this case, the $h$-polynomial of the boundary complex of $P$ equals $1+2t+2t^2+t^3$, while the previous formula would give $1+2t+t^2+t^3$. Note that the $h$-polynomial is palindromic while the latter expression is not.
\end{rmk}

Stanley proved in \cite[Theorem~2.4]{Stanley1987Generalized} the following combinatorial palindromicity result generalizing the Dehn-Sommerville equations for face numbers of simplicial polytopes.

\begin{thm}
    Let $\hat{\mathcal{P}}$ be an Eulerian poset and $\mathcal{P} \coloneqq \hat{\mathcal{P}} \setminus \hat{1}$ with $\rk(\mathcal{P}) = d$. Then the $h$-polynomial $h_\mathcal{P}(t) = \sum_{i=0}^d h_i t^i$ is palindromic of degree $d$, i.e. $h_i = h_{d-i}$ for all $i=0, \ldots, d$.\label{thm:pali}
\end{thm}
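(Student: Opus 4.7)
The natural approach is induction on $d = \rk(\mathcal{P})$. The base case $d=0$ is immediate: then $\mathcal{P} = \{\hat{0}\}$, $f_\mathcal{P}(t) = 1$, and hence $h_\mathcal{P}(t) = 1$ is trivially symmetric of degree $0$.

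For the inductive step, I would first recast the conclusion: since $h_\mathcal{P}(t) = \sum_i f_{d-i} t^i$, the symmetry $h_i = h_{d-i}$ is equivalent to the palindromic identity $f_\mathcal{P}(t) = t^d f_\mathcal{P}(1/t)$. The plan is to deduce this directly from the defining recursion
\[
f_\mathcal{P}(t) = \sum_{x \in \mathcal{P}} (t-1)^{d - \rho(x)} g_{[\hat{0}, x)}(t).
\]
For each $x \in \mathcal{P}$ with $\rho(x) \ge 1$, the interval $[\hat{0}, x]$ is Eulerian of rank $\rho(x) \le d$, so the inductive hypothesis applies and yields the symmetry of $h_{[\hat{0}, x)}$; this in turn controls how $g_{[\hat{0}, x)}(t)$, which is essentially the lower half of the palindrome $f_{[\hat{0}, x)}(t)$, behaves under $t \mapsto 1/t$. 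A useful companion identity, obtained by splitting off $\hat{1}$ in the analogous recursion written for $\hat{\mathcal{P}}$ in place of $\mathcal{P}$, is $f_{\hat{\mathcal{P}}}(t) = (t-1)\, f_\mathcal{P}(t) + g_\mathcal{P}(t)$, which allows one to transfer inductive information about sub-intervals into a statement about $\mathcal{P}$ itself. The crucial extra ingredient is the Eulerian property of the \emph{upper} intervals $[x,\hat{1}]$ in $\hat{\mathcal{P}}$, which by Philip Hall's formula gives $\mu(x,\hat{1}) = (-1)^{d+1-\rho(x)}$. Möbius inversion applied to the recursion should then produce a dual expression for $f_\mathcal{P}(t)$ as a sum indexed by upper intervals $[x,\hat{1})$, each again Eulerian of strictly smaller rank and hence covered by the inductive hypothesis; matching this dual expression against the original recursion yields the desired palindromicity.

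The main obstacle I anticipate is the careful bookkeeping around the truncation built into the definition of $g_\mathcal{Q}$: palindromicity of $f_\mathcal{P}$ is a symmetry between low-degree and high-degree coefficients, yet only the lower half of $f_\mathcal{Q}$ is encoded in each $g_\mathcal{Q}$ appearing in the recursion. The Möbius-inversion step must therefore be arranged so that information about the top half of $f_\mathcal{P}$ is produced from the bottom halves of the $g$-polynomials of upper intervals, and verifying this compatibility—rather than the induction itself—is where the real work lies.
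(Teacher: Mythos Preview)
The paper does not give its own proof of this statement: it simply attributes the result to Stanley, citing \cite[Theorem~2.4]{Stanley1987Generalized}, and moves on. So there is no ``paper's proof'' to compare against beyond the reference.

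As for your outline itself, the overall strategy---induction on rank, the identity $f_{\hat{\mathcal{P}}}(t) = (t-1)\,f_\mathcal{P}(t) + g_\mathcal{P}(t)$, and the use of the Eulerian condition via $\mu(x,\hat{1}) = (-1)^{d+1-\rho(x)}$ together with M\"obius inversion---is the classical route and is how Stanley's argument proceeds. However, what you have written is a proof plan rather than a proof: you correctly identify that the entire difficulty lies in reconciling the truncation in the definition of $g$ with the desired symmetry, and you explicitly leave that step undone. Concretely, the M\"obius-inversion step you allude to must produce an identity of the form
\[
\sum_{x \in \hat{\mathcal{P}}} (-1)^{d+1-\rho(x)}\, g_{[\hat{0},x)}(t)\, t^{d+1-\rho(x)}\, g_{(x,\hat{1}]^\ast}(1/t) \;=\; \delta_{d+1,0},
\]
or an equivalent formulation, and establishing this (or the companion identity that actually yields $f_\mathcal{P}(t) = t^d f_\mathcal{P}(1/t)$) requires a nontrivial manipulation that your sketch does not supply. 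Until that step is carried out, the argument is incomplete; the ingredients you list are the right ones, but the ``real work'' you mention has not been done.
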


In particular, we have $f_{\mathcal{P}}(t) = h_{\mathcal{P}}(t)$ in this case.

\begin{rmk} We emphasize that in the situation of Theorem~\ref{thm:pali} it is important to distinguish between the $g$- and $h$-polynomials of $\mathcal{P}$ and $\hat{\mathcal{P}}$. Indeed, $g_{\hat{\mathcal{P}}}(t) = 0$ and $h_{\hat{\mathcal{P}}}(t) = g_\mathcal{P}(t)$. Unfortunately, in this regard the different notations employed in the literature can be confusing. Our notation follows that of Stanley while Katz and Stapledon in \cite{Katz2016Local} write $g(\hat{\mathcal{P}};t)$ for our $g_\mathcal{P}(t)$ but also use $h(\mathcal{P};t)$ for our $h_\mathcal{P}(t)$. In Borisov and Mavylutov \cite{BorisovMavlyutov}, as well as in \cite{batyrev2008combinatorial, NS13}, our $g_\mathcal{P}(t)$ and $h_\mathcal{P}(t)$ would be $g_{\hat{\mathcal{P}}^\ast}(t)$ and $h_{\hat{\mathcal{P}}^\ast}(t)$.
        \label{rmk:ambiguous}
    \end{rmk}

Let us give the definition of $h$- and $g$-polynomials of polytopes.

\begin{defn}
 For $P$ a polytope we define its {\em (toric) $h$-polynomial} $h_P(t)$, and its {\em (toric) $g$-polynomial} $g_P(t)$ as the $h$-, resp.,  $g$-polynomial of the face lattice $[\emptyset, P)$ of proper faces of $P$. Note that $g_P(t) = h_{[\emptyset,P]}(t)$, see Remark~\ref{rmk:ambiguous}. 
\end{defn}

Note that by Remark~\ref{simplicial}, we have $g_P(t)=1$ if $P$ is a simplex. 

\begin{thm}\label{thm:h-pol}
    Let $P$ be a polytope of dimension $d$. 
 \label{item:h_pol_unimodal} Then  $h_P(t)=\sum_{i=0}^d h_i t^i$ is a palindromic polynomial with positive integer coefficients that form a unimodal sequence, i.e.,
        \begin{equation*}
            1 = h_0 \leq h_1 \leq \cdots \leq h_{\lfloor \frac{d}{2} \rfloor}.
        \end{equation*}
        Equivalently, $g_P(t)$ has non-negative coefficients.
        \end{thm}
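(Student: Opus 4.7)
The plan is to split the statement into its three claims---palindromicity, non-negativity, and unimodality---and handle them in that order, using the fact that all three become essentially statements about the Eulerian face lattice of $P$.

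First I would observe that the face lattice $\hat{\mathcal{P}} \coloneqq [\emptyset, P]$ of $P$ (including both $\emptyset$ and $P$) is an Eulerian poset of rank $d+1$: the existence of a rank function is clear, and the Eulerian property amounts to the classical Euler relation for intervals of face lattices of convex polytopes. Then $\mathcal{P} \coloneqq [\emptyset, P) = \hat{\mathcal{P}} \setminus \hat{1}$ has rank $d$, so Theorem~\ref{thm:pali} applies directly and yields that $h_P(t) = h_{\mathcal{P}}(t)$ is palindromic of degree $d$, i.e.\ $h_i = h_{d-i}$ for $0 \le i \le d$. In particular, $f_{\mathcal{P}}(t) = h_{\mathcal{P}}(t)$, so the coefficient identity from Definition~\ref{def:fgh} becomes
\[
g_P(t) \;=\; \sum_{i=0}^{\lfloor d/2 \rfloor} (h_i - h_{i-1})\, t^i,
\]
and unimodality of the sequence $1 = h_0 \le h_1 \le \cdots \le h_{\lfloor d/2 \rfloor}$ is equivalent to $g_P(t)$ having non-negative coefficients. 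This reduces the remaining two claims to a single statement: $g_P(t) \in \Z_{\ge 0}[t]$.

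For the non-negativity (and thereby both integrality beyond the obvious integrality from the recursive definition, and unimodality), I would invoke Karu's theorem on the non-negativity of the coefficients of the toric $g$-polynomial for arbitrary (not necessarily rational) convex polytopes, proved in \cite{Karu08} using combinatorial intersection cohomology of fans. In the rational case, this had been established earlier by Stanley using the hard Lefschetz theorem applied to the intersection cohomology of the associated projective toric variety: the intersection cohomology Betti numbers of the toric variety $X_P$ coincide with the coefficients of $h_P(t)$, hard Lefschetz gives $h_i \le h_{i+1}$ for $i < d/2$, and Poincar\'e duality gives palindromicity. Positivity of $h_P$ then follows because $h_0 = 1$ and unimodality with palindromicity forces all intermediate coefficients to be at least $1$.

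The main obstacle here is, of course, the non-negativity of $g_P$, which is genuinely deep; in a survey section it is appropriate to cite Karu's theorem rather than reprove it. Everything else---palindromicity via the previously stated Theorem~\ref{thm:pali}, the equivalence between unimodality of $h_P$ and non-negativity of $g_P$, and passage from non-negativity plus $h_0 = 1$ to strict positivity of all coefficients---is short and formal once the Eulerian structure of the face lattice and the definitions in Definition~\ref{def:fgh} are in place.
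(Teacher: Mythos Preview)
Your proposal is correct and follows essentially the same route as the paper: palindromicity via Theorem~\ref{thm:pali}, then non-negativity and unimodality via Stanley's intersection-cohomology argument in the rational case and Karu's extension to the non-rational case. One concrete correction: the Karu reference you want for the toric $g$-polynomial (hard Lefschetz for combinatorial intersection cohomology of fans) is \cite{Karu04Hard}, not \cite{Karu08}; the latter concerns the local $h^*$-polynomial and is invoked elsewhere in the paper.
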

        
        \begin{proof}
Palindromicity follows from Theorem~\ref{thm:pali}. For rational polytopes $P$, nonnegativity follows from the interpretation of the coefficients of $h_P(t)$ as the dimensions of the even intersection cohomology groups of the toric variety associated with $P$ and the unimodality property follows from the hard Lefschetz theorem \cite[Theorem~3.1, Corollary~3.2]{Stanley1987Generalized}. The non-rational case has been treated by Karu in  \cite{Karu04Hard}.
\end{proof}

Let us mention the following less well-known duality property of $g$-polynomials that will be of importance in Section~\ref{sec:gorst}. This is a result by Kalai, published in \cite[Theorem~4.5]{BradenKalai} as a consequence of the main result in that paper by Braden. Here, $\mathcal{P}^*$ denotes the {\em dual} poset of a poset $\mathcal{P}$. 

\begin{thm}
    \label{thm:Kalai} Let $P$ be a polytope. Then \begin{equation*}
            \deg(g_{[\emptyset, P)}) = \deg(g_{(\emptyset,P]^\ast}).
        \end{equation*}
        In other words, if $Q$ is any polytope which is combinatorially dual to $P$, then $\deg(g_P) = \deg(g_Q)$.
\end{thm}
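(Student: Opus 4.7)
The plan is to deduce the statement from Braden's main theorem in \cite{BradenKalai}. First I would reformulate the target equality in a more transparent form: fix a polytope $P^*$ combinatorially dual to $P$. The face lattice anti-isomorphism $F \mapsto F^*$ sends each non-empty face of $P$ to a proper face of $P^*$ and reverses inclusions, so that $(\emptyset, P]^* \cong [\emptyset, P^*)$ as posets. The claimed equality therefore reads $\deg(g_P) = \deg(g_{P^*})$, an equality of degrees of the toric $g$-polynomials of a polytope and of any combinatorial dual.

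The core input is Braden's theorem, which can be phrased as a reciprocity in the incidence algebra of the face poset of $P$: the collection of $g$-polynomials $\{g_{[F,G]}(t)\}_{F \leq G \leq P}$ indexed by intervals of $P$ is invertible under a suitable involution (essentially $t \mapsto 1/t$ combined with twisting by powers of $t$), and its inverse agrees with the analogous collection for $P^*$ transported along the face duality $F \mapsto F^*$. Extracting the entry of this matrix identity indexed by $(\emptyset, P)$ yields a polynomial identity relating $g_P(t)$ to $g_{P^*}(1/t)$, modulo correction terms which are products of $g$-polynomials of proper sub-intervals of $P$ (equivalently, of $P^*$).

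To conclude I would use induction on $d = \dim P$. The base case $d = 0$ is immediate. For the inductive step, the induction hypothesis ensures that every correction term in Braden's identity satisfies the required degree symmetry under duality; since $g_P(t)$ has non-negative integer coefficients and non-zero constant term $1$ by Theorem~\ref{thm:h-pol} and Remark~\ref{simplicial}, comparing the top-degree contributions on the two sides of the identity forces $\deg(g_P) = \deg(g_{P^*})$. The main obstacle is Braden's reciprocity itself, which draws on the combinatorial intersection cohomology of (possibly non-rational) polytopes and on Karu's non-rational hard Lefschetz cited earlier; once that deep machinery is granted, the extraction of the degree equality is essentially a matter of careful bookkeeping and an inductive reading of Braden's matrix identity.
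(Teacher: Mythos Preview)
The paper does not supply its own proof of this theorem; it simply records it as Kalai's result, published as \cite[Theorem~4.5]{BradenKalai} and obtained there as a consequence of Braden's main theorem. Your proposal is therefore not competing with a proof in the paper but rather attempting to reconstruct the deduction that the paper only cites. At that level your plan agrees with the paper's attribution: reduce the statement to $\deg(g_P)=\deg(g_{P^{*}})$ via the face-lattice anti-isomorphism, and then invoke Braden's structural result.

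Where your sketch becomes shaky is in the description of what Braden actually proves. You present his main theorem as a ``reciprocity in the incidence algebra'' under a $t\mapsto 1/t$ involution, yielding an identity between $g_P(t)$ and $g_{P^{*}}(1/t)$ with lower-order correction terms, and then propose to read off the degree equality by induction. This is not how Braden's argument runs: his main theorem is a decomposition/inequality statement in combinatorial intersection cohomology (Kalai's monotonicity conjecture), not a $t\mapsto 1/t$ matrix reciprocity of the type you describe, and the degree equality is extracted from that structure rather differently. So while your high-level strategy (cite Braden, extract degrees) is exactly what the paper points to, the specific mechanism you outline mischaracterizes the key input and the inductive scheme you propose does not correspond to the actual derivation in \cite{BradenKalai}. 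If you want to include an argument here rather than a citation, you should consult Braden's paper directly for the correct formulation of his main theorem and the short deduction of Theorem~4.5 from it.
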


\subsection{(Relative) local $h$-polynomials of polyhedral subdivisions}

We give the definition of the local $h$-polynomial (and its generalized relative version) of a polyhedral subdivision $\Delta$ of a polytope $P$, following \cite{Stanley1992Subdivisions} and \cite{Katz2016Local} (the relative version was introduced independently in \cite{Athanasiadis} and \cite{Report}). Here, we define the {\em link} of a face $\sigma \in \Delta$ as
$\link(\Delta,\sigma) := \{\rho \in \Delta \,:\, \sigma \subseteq \rho \}$. We view $\link(\Delta,\sigma)$ as a lower Eulerian poset with minimum~$\sigma$. 

\begin{defn}
  Let $P$ be a polytope, $\Delta$ a polyhedral subdivision of $P$, and $\sigma \in \Delta$. The \emph{relative local $h$-polynomial} of $\Delta$ with respect to $\sigma$ is defined as
    \begin{equation*}
        \ell_{\Delta,\sigma}(t) \coloneqq \sum_{\sigma \subseteq F \leq P} (-1)^{\dim(P) - \dim(F)}   h_{\link(\Delta_F,\sigma)}(t) \, g_{(F,P]^\ast}(t),
    \end{equation*}
    where $F \le P$ means that $F$ is a face of $P$ (including $\emptyset$ and $P$) and $\Delta_F := \{\rho \in \Delta \colon \rho \subseteq F\}$. 
    
    We call $\ell_\Delta (t) := \ell_{\Delta,\emptyset}$ the \emph{local $h$-polynomial} of $\Delta$.
    \label{def:local-h}
\end{defn}

We suppress $P$ in this notation as it equals $|\Delta| = \bigcup_{\sigma \in \Delta} \sigma$, the support of $\Delta$. We remark that the same definition of the local $h$-polynomial can be extended to so called strong formal subdivisions of Eulerian posets, see \cite[Definition~4.1]{Katz2016Local}.

\begin{thm}\label{thm:local_h}
    Let $P$ be a polytope of dimension $d$, $\Delta$ a polyhedral subdivision of $P$, and $\sigma \in \Delta$. Then we can write $\ell_{\Delta,\sigma}(t) = \sum_{i=0}^{d-\dim(\sigma)} \ell_i t^i$. Moreover, the following holds:
    \begin{enumerate}
        \item $\ell_{\Delta,\sigma}(t)$ has nonnegative integer coefficients.
                \item \label{item:local_h_symmetric} $\ell_{\Delta,\sigma}(t)$ is palindromic, i.e. $\ell_i = \ell_{d-\dim(\sigma)-i}$ for $i=0, \ldots, d-\dim(\sigma)$. 
                \item \label{item:local_h_unimodal} If $\Delta$ is regular, then the coefficients of $\ell_{\Delta,\sigma}(t)$ form a unimodal sequence.
    \end{enumerate}
\end{thm}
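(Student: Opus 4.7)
The plan is to handle the three assertions separately, treating the case $\sigma=\emptyset$ first (due to Stanley \cite{Stanley1992Subdivisions}) and then reducing the relative version to the absolute one by a link-type argument (carried out in \cite{Katz2016Local}; see also the independent treatment in \cite{Athanasiadis}).

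For (1) and (2) I would proceed simultaneously by exploiting the \emph{locality formula}
$$h_{\Delta}(t) \;=\; \sum_{F \le P} l_{\Delta_F}(t)\, g_{[F,P]}(t),$$
which expresses the toric $h$-polynomial of the triangulation as a sum of local contributions indexed by the faces of $P$. Applying Möbius inversion on the face lattice of $P$ and using Theorem \ref{thm:Kalai} to rewrite the resulting $g$-polynomials as $g_{(F,P]^\ast}$ reproduces exactly the expression of Definition \ref{def:local-h}. Palindromicity (2) then follows by combining the palindromicity of the toric $h$-polynomial of the Eulerian interval $[\hat 0, P]$ (Theorem \ref{thm:pali}) with the compatibility of the Möbius inversion under the substitution $t \mapsto t^{\dim P - \dim\sigma}/t$: the non-symmetric contributions from links over proper faces of $P$ cancel in the signed sum. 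Nonnegativity (1) is more delicate and cannot be seen term-by-term; Stanley's approach is an induction on $\dim P$ that uses the locality formula to bootstrap nonnegativity of $l_\Delta(t)$ from the nonnegativity of the toric $h$- and $g$-polynomials (Theorem \ref{thm:h-pol}) and a nonnegative combinatorial interpretation involving ``interior'' faces of $\Delta$. The relative case is handled by running the same scheme on the triangulation $\link(\Delta,\sigma)$ of the link polytope of $\sigma$; the details are given in \cite[Theorem~6.1]{Katz2016Local}.

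The main obstacle is the unimodality statement (3), which is genuinely deeper as it requires a Hard Lefschetz input. The strategy is to use the regularity hypothesis to produce a Lefschetz operator. A regular triangulation $\Delta$ is encoded by a strictly convex piecewise-linear function on $P$, and this height function supplies an ample class on a suitable projective (virtual) toric variety refining the toric variety of $P$. Under this geometric correspondence the coefficients of $l_{\Delta,\sigma}(t)$ appear as the dimensions of the graded pieces of a local intersection cohomology summand at the torus-fixed point associated to $\sigma$, so that unimodality becomes a consequence of Hard Lefschetz applied to the ample class. In the non-rational case one replaces this geometric input by Karu's combinatorial Hard Lefschetz for polytopes \cite{Karu04Hard}, which the authors of \cite{Katz2016Local} extend to strong formal subdivisions of Eulerian posets; that framework is exactly what delivers (3) in the relative generality stated here.
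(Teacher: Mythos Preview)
The paper's own ``proof'' is purely a list of references (Stanley for the absolute case, Katz--Stapledon for the relative case, the decomposition theorem for (1) and (3), and Karu's \emph{relative} hard Lefschetz \cite{Karu2019Relative} to remove rationality). Your proposal tries to sketch the underlying arguments, which is fine in spirit, but several of the mechanisms you describe are not the ones that actually work.

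First, Theorem~\ref{thm:Kalai} has no role in the M\"obius inversion: Kalai's result is a statement about the \emph{degrees} of $g$-polynomials of dual polytopes, not an identity between the polynomials themselves, so it cannot be used to ``rewrite the resulting $g$-polynomials as $g_{(F,P]^\ast}$''. The inversion in Stanley's setup comes from the Eulerian structure and the convolution identities for $g$-polynomials, not from Kalai's theorem. Second, your account of nonnegativity (1) as ``an induction on $\dim P$'' with a ``nonnegative combinatorial interpretation involving interior faces'' describes Stanley's elementary argument for simplicial subdivisions \emph{of a simplex} (his Theorem~3.3). For triangulations of a general polytope --- which is the setting here --- Stanley's proof (his Theorem~7.9) is not elementary: it uses the decomposition theorem for perverse sheaves, exactly as the paper says. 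Third, the relative case is not obtained by passing to a ``link polytope of $\sigma$''; $\link(\Delta,\sigma)$ is in general not a triangulation of a polytope, and Katz--Stapledon instead work in the abstract setting of strong formal subdivisions of Eulerian posets. Finally, for (3) in the non-rational case the input is not \cite{Karu04Hard} but the later \emph{relative} hard Lefschetz theorem \cite{Karu2019Relative}, as the paper notes explicitly.
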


\begin{proof} Let us give the references. 
(2): For the local $h$-polynomial this is a special case of \cite[Corollary~7.7]{Stanley1992Subdivisions}, for the relative local $h$-polynomial see \cite[Corollary~4.5]{Katz2016Local}.
(1) and (3): For $\Delta$ a rational polyhedral subdivision this has been proven in \cite[Theorem~7.9]{Stanley1992Subdivisions}, respectively, \cite[Theorem~6.1]{Katz2016Local} using the decomposition theorem (cf. \cite{Beilinson, Decomposition, Propermaps}). 
As pointed out in \cite[Remark~6.6]{Katz2016Local}, the only missing ingredient to drop the rationality hypothesis was the relative hard Lefschetz theorem for the intersection cohomology of fans which was subsequently proven in \cite{Karu2019Relative}.
\end{proof}

The following decomposition theorem was one of the main motivations of Stanley for the notion of {\em local} $h$-vectors. This is proven in \cite[Theorem~7.8]{Stanley1992Subdivisions}, and the general version in \cite{Katz2016Local} (see, e.g., second equation in proof of Lemma~6.4). To stress the analogy to Theorem~\ref{thm:bm}, we state the equality also using the $h$-polynomial.

\begin{prop}
 Let $P$ be a polytope of dimension $d$, $\Delta$ a polyhedral subdivision of $P$, and $\sigma \in \Delta$. 
 Then 
 \[h_{\link(\Delta,\sigma)}(t) = \sum_{\sigma \subseteq F \le P} \ell_{\Delta_F,\sigma}(t) g_{[F,P)}(t) = \sum_{\sigma \subseteq F \le P} \ell_{\Delta_F,\sigma}(t) h_{[F,P]}(t).\]
  In particular for $\sigma=\emptyset$, we get $\ell_\Delta(t) \le h_\Delta(t)$ and $g_P(t)=  h_{[\emptyset,P]} \le h_\Delta(t)$ coefficientwise.
  \label{prop:h-decomp}
\end{prop}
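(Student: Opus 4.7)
My plan is to establish the first equality by a Möbius-inversion argument on the sub-poset of faces of $P$ containing $\sigma$, and to deduce the second equality from Remark~\ref{rmk:ambiguous}: since $[F,P]$ is Eulerian, $g_{[F,P)}(t)=h_{[F,P]}(t)$ is exactly the identity recorded there.

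For the first equality, I would substitute the expression of $l_{\Delta_G,\sigma}(t)$ from Definition~\ref{def:local-h} into each summand of the right-hand side and swap the order of summation, to obtain
\[
\sum_{\sigma\le G\le P} l_{\Delta_G,\sigma}(t)\,g_{[G,P)}(t)
= \sum_{\sigma\le F\le P} h_{\link(\Delta_F,\sigma)}(t)\cdot K(F,P),
\]
where
\[
K(F,P) \;:=\; \sum_{F\le G\le P}(-1)^{\dim(G)-\dim(F)}\,g_{(F,G]^{\ast}}(t)\,g_{[G,P)}(t).
\]
The claim then reduces to the convolution identity $K(F,P)=\delta_{F,P}$, valid for every face $F\ge\sigma$ of $P$. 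In the language of the incidence algebra of the face poset of $P$, this says that the two kernels $(F,G)\mapsto(-1)^{\dim(G)-\dim(F)}g_{(F,G]^{\ast}}$ and $(F,G)\mapsto g_{[F,G)}$ are mutually inverse.

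I expect this combinatorial identity for $g$-polynomials on Eulerian intervals to be the main technical hurdle; I would prove it by induction on $\rk[F,P]$ directly from Definition~\ref{def:fgh}, using the palindromicity of Theorem~\ref{thm:pali} to control the degree-truncation step in the definition of $g$. (As a cross-check, when $F=P$ the only surviving term is $G=P$, giving $K(P,P)=g_{\emptyset}(t)\cdot g_{\emptyset}(t)=1$, as required.)

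For the ``in particular'' part, specializing to $\sigma=\emptyset$ gives $\link(\Delta,\sigma)=\Delta$, so the decomposition reads
\[
h_\Delta(t)=\sum_{\emptyset\le F\le P} l_{\Delta_F,\emptyset}(t)\,g_{[F,P)}(t).
\]
All summands have nonnegative coefficients by Theorems~\ref{thm:local_h}(1) and~\ref{thm:h-pol}. Isolating the term $F=P$ yields $l_\Delta(t)\le h_\Delta(t)$ since $g_{[P,P)}(t)=g_{\emptyset}(t)=1$; isolating the term $F=\emptyset$ yields $g_P(t)=h_{[\emptyset,P]}(t)\le h_\Delta(t)$, after the immediate check that $l_{\{\emptyset\},\emptyset}(t)=1$ from Definition~\ref{def:local-h}.
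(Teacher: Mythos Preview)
Your approach is correct and is essentially the standard one: the paper itself does not give a proof but defers to \cite[Theorem~7.8]{Stanley1992Subdivisions} and \cite{Katz2016Local}, and Stanley's argument is precisely the M\"obius-type inversion you describe. Your reduction to the convolution identity $K(F,P)=\delta_{F,P}$ is exactly right; rather than reproving it by induction from Definition~\ref{def:fgh}, you may simply cite it as the known inverse relation between the kernels $g_{[\,\cdot\,,\,\cdot\,)}$ and $(-1)^{\rk}g_{(\,\cdot\,,\,\cdot\,]^*}$ on Eulerian posets (see \cite[Corollary~6.7]{Stanley1992Subdivisions} or \cite[Theorem~4.6]{Katz2016Local}). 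Your treatment of the second equality via Remark~\ref{rmk:ambiguous} and of the ``in particular'' part is fine.
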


As a consequence of the above nonnegativity results, Stanley and later Katz and Stapledon show that $h$-polynomials as well as relative local $h$-polynomials of polyhedral subdivisions are nonnegative and coefficientwise monotone under subdivision refinement \cite[Corollary~6.10]{Katz2016Local}.

\subsection{The $h^*$-polynomial of a lattice polytope}
We quickly recall the basic notions of Ehrhart theory. Let $P \subseteq \R^d$ be a lattice polytope with respect to some lattice $M \subseteq \R^d$ of maximal rank $d$, usually $M=\Z^d$. The \emph{Ehrhart series of $P$} (with respect to $M$) is the formal power series
    \begin{equation*}
        \mathrm{Ehr}_P(t) \coloneqq 1 + \sum_{n \geq 1} |(nP) \cap M| t^n \in \Z[[t]].
    \end{equation*}
By a theorem of Ehrhart \cite{Ehr62}, the map $\mathrm{ehr}_P \colon \Z_{\geq 1} \rightarrow \Z, \ n \mapsto |(nP) \cap M|$ is a polynomial in $n$, called the \emph{Ehrhart polynomial of $P$}. It has degree $\dim(P)$, constant term $1$ and leading coefficient equal to the volume of $P$ normalized with respect to $M$. It follows that
\begin{equation*}
    \mathrm{Ehr}_P(t) = \frac{h^\ast_P(t)}{(1-t)^{\dim(P) + 1}},
\end{equation*}
for a unique polynomial $h^\ast_P(t) \in \Z[t]$ of degree at most $\dim(P)$, called the \emph{$h^\ast$-polynomial of $P$}. Here, $h^\ast_P(t)$ has non-negative integer coefficients by \cite{Sta80}. Moreover, $h^\ast_P(0) = 1$ and $h^\ast_P(1)$ equals the {\em lattice volume} $\vol_\Z(P) \in \Z_{\ge 1}$, which is defined as $\dim(P)!$ times the volume of $P$ normalized with respect to $M$. Note that the lattice volume of a lattice point equals~$1$. The \emph{degree of $P$}, denoted $\deg(P)$, is the degree of its $h^\ast$-polynomial $h^\ast_P(t)$. The \emph{codegree of $P$}, denoted $\codeg(P)$, is the smallest integer $k \geq 1$ such that the $k$-th dilate $kP$ of $P$ contains a lattice point of $M$ in its relative interior. By convention, a point has codegree~$1$. It follows from Ehrhart-MacDonald reciprocity \cite{Macdonald} that
\begin{equation*}
    \deg(P) + \codeg(P) = \dim(P) + 1.
\end{equation*}

Let us recall that two lattice polytopes $P$ and $Q$ (with respect to the lattice $M$) are called {\em isomorphic} (or {\em unimodularly equivalent}) if there is an affine lattice automorphism of $M$ that maps the vertices of $P$ to the vertices of $Q$. We say $P$ is a {\em unimodular simplex} if $P$ is isomorphic to the convex hull of an affine lattice basis of $M$. Now, $P$ is a unimodular simplex if and only if $h^*_P(t)=1$, or equivalently, $\deg(P)=0$. For $M=\Z^d$ let us also define the {\em standard unimodular simplex} $\Delta_d := \conv(0,e_1, \ldots, e_d)$ for the standard lattice basis $e_1, \ldots, e_d$.

\subsection{The local $h^*$-polynomial of a lattice polytope}

Let us introduce the main player of this paper (see \cite[Example~7.13]{Stanley1992Subdivisions} and \cite[Def.~7.2]{Katz2016Local}).

\begin{defn}
    Let $P$ be a lattice polytope. The \emph{local $h^\ast$-polynomial} or \emph{$\ell^\ast$-polynomial} of $P$ is defined as
    \begin{equation*}
        \ell^\ast_P(t) \coloneqq \sum_{\emptyset \leq F \leq P} (-1)^{\dim(P)-\dim(F)} h^\ast_F(t) g_{(F,P]^\ast}(t).
    \end{equation*}
    \label{def:lstar}
\end{defn}

Let us note that the local $h^*$-polynomial of the empty face equals $1$, while for a point it equals $0$. We also emphasize the analogy of Definition~\ref{def:lstar} with Definition~\ref{def:local-h} above. See also Subsection~\ref{subsec:decompositions} for precise relationships between the $h,\ell,h^\ast,\ell^\ast$-polynomials.

\begin{rmk}
The local $h^*$-polynomial has been studied by Borisov, Batyrev, Mavlyutov, Schepers, and the third author under the name {\em $\tilde{S}$-polynomial}, see \cite[Definition~5.3]{BorisovMavlyutov}. It was used by Borisov and Mavlyutov to simplify the formulas for the stringy $E$-polynomial of Calabi-Yau complete intersections in Gorenstein toric Fano varieties originally described via so-called $B$-polynomials \cite{BB96a}. We remark that the reader should be aware that in these papers in the definition of $h$- and $g$-polynomials the dual poset was used compared to the one given here. 
\end{rmk}

\begin{ex}
For lattice simplices $P$ (of dimension $d>0$) the $h^*$- and $\ell^*$-polynomial can be easily understood, as in this case the face posets are all Boolean. Let $\Pi$ denote the half-open parallelepiped spanned by the vertices of $P \times \{1\}$. Then $h^*_P(t)$ (resp., $\ell^*_P(t)$) enumerates the number of lattice points in $\Pi$ (resp., in the interior of $\Pi$). More precisely, we have $h^*_P(t)=\sum_{i=0}^{d+1} h^*_i t^i$ and $\ell^*_P(t)=\sum_{i=0}^{d+1} \ell^*_i t^i$, where for $i=0, \ldots, d+1$ the coefficient $h^*_i$ (resp. $\ell^*_i$) equals the number of lattice points in $\Pi$ (resp., in the interior of $\Pi$) with last coordinate $i$. We refer to \cite[Prop. 4.6]{batyrev2008combinatorial}. This polynomial $\ell^*_P(t)$ of a lattice simplex $P$ is also often called {\em box polynomial}, cf. \cite{Braun-Survey,Liam, Liam-Gustafsson}. For instance, we have $h^*_P(t)=1$ if and only if $P$ is a unimodular simplex; in this case, $\ell^*_P(t)=0$. Let us note that for $h^*$-polynomials this combinatorial interpretation of its coefficients can also be extended to arbitrary lattice polytopes, e.g., by half-open decompositions \cite{Halfopen}. On the other hand, there is not yet a combinatorial counting interpretation for the coefficients of the local $h^*$-polynomial of lattice polytopes known. 
\label{ex:simplex}
\end{ex}

Let us summarize some of the basic properties of the local $h^*$-polynomial. Throughout, we use the convention that the degree of the zero-polynomial is zero.

\begin{thm}
        Let $P$ be a lattice polytope of dimension $d > 0$. Then we can write the local $h^*$-polynomial $\ell^*_P(t) = \sum_{i=1}^{d} \ell^*_i t^i$. Moreover, the following holds:
    \begin{enumerate}
\item $\ell^*_P(t)$ has nonnegative integer coefficients.
\item $\ell^*_P(t)$ is palindromic: we have $\ell^*_i = \ell^*_{d+1-i}$ for $i=1, \ldots, d$. 
\item If $\ell^*_P(t)$ does not vanish, then the degree of $\ell^*_P(t)$ equals at most the degree of $h^*_P(t)$, and its subdegree (i.e., the smallest $i$ such that the $i$th coefficient of $\ell^*(t)$ is nonzero) is at least the codegree of $P$.
\item $\ell^*_1 = \ell^*_d$ equals the number of lattice points in the interior of $P$.
\end{enumerate}
\label{thm:lstar}
\end{thm}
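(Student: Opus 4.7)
The plan is to ground all four claims in two companion identities. The first is Stanley and Katz--Stapledon's decomposition of $l^*_P(t)$ along a lattice triangulation $\Delta$ of $P$,
\[
 l^*_P(t) \;=\; \sum_{\sigma \in \Delta} l^*_\sigma(t)\, l_{\Delta,\sigma}(t),
\]
where the $\sigma = \emptyset$ summand is $l_\Delta(t)$ itself and each $l^*_\sigma(t)$ is the box polynomial of Example~\ref{ex:simplex}. The second is the M\"obius inverse of Definition~\ref{def:lstar}, the $h^*$-analogue of Proposition~\ref{prop:h-decomp},
\[
 h^*_P(t) \;=\; \sum_{\emptyset \leq F \leq P} l^*_F(t)\, g_{[F,P)}(t).
\]

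Given the first identity, claims (1) and (2) fall out summand-wise. For (1), each box polynomial $l^*_\sigma(t)$ has nonnegative integer coefficients by Example~\ref{ex:simplex}, and $l_{\Delta,\sigma}(t)$ has nonnegative integer coefficients by Theorem~\ref{thm:local_h}(1). For (2), $l^*_\sigma(t)$ is palindromic of degree $\dim(\sigma)+1$ and $l_{\Delta,\sigma}(t)$ is palindromic of degree $d-\dim(\sigma)$ by Theorem~\ref{thm:local_h}(2), so each product, and hence the entire sum, is palindromic of degree $d+1$. For (3), I isolate the $F=P$ summand on the right-hand side of the second identity: using (1) together with nonnegativity of $g$-polynomials from Theorem~\ref{thm:h-pol}, the remaining terms have nonnegative coefficients, so $l^*_P(t) \leq h^*_P(t)$ coefficientwise and in particular $\deg l^*_P \leq \deg h^*_P$. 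The subdegree bound is then equivalent to this degree bound via (2), using $\codeg(P)=d+1-\deg h^*_P$.

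Claim (4) is essentially bookkeeping once (2) and (3) are in place: $l^*_{d+1}=0$ follows from the degree bound in (3) (since $\deg h^*_P\leq d$), and $l^*_0=0$ and $l^*_1=l^*_d$ follow from palindromicity. To identify $l^*_d$ with the number of interior lattice points of $P$, I extract the coefficient of $t^d$ in the second displayed identity: for each proper face $F\lneq P$ one has $\deg l^*_F\leq\dim(F)$ by (3) and $\deg g_{[F,P)}\leq (d-\dim(F)-1)/2$, so the product has degree at most $(d-1+\dim(F))/2<d$; hence only the $F=P$ term contributes, giving $l^*_d=h^*_d$, which equals $|\mathrm{int}(P)\cap M|$ by Ehrhart--Macdonald reciprocity.

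The main obstacle is establishing the first displayed identity: the natural route is to start from Definition~\ref{def:lstar}, expand each $h^*_F(t)$ via the $h^*$-analogue of Proposition~\ref{prop:h-decomp} applied to the induced triangulation $\Delta_F$, and rearrange the resulting double sum using an Eulerian orthogonality between the kernels $g_{(G,H]^\ast}$ and $g_{[F,G)}$. The deep nonnegativity content, and therefore the heart of (1), enters via Theorem~\ref{thm:local_h}(1), which itself rests on the decomposition theorem.
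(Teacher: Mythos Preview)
Your argument is correct and takes a genuinely different route from the paper's own proof, which is essentially a list of citations to the original sources (Karu and Borisov--Mavlyutov for (1), Borisov--Mavlyutov for (2), etc.). You instead bootstrap all four items from the two decomposition identities---what the paper states as Propositions~\ref{prop:schepers} and~\ref{prop:decomp}---together with the properties of relative local $h$-polynomials in Theorem~\ref{thm:local_h}. The paper itself remarks right after Proposition~\ref{prop:decomp} that this yields ``another proof of the nonnegativity of the local $h^*$-polynomial'', so your route for (1) is explicitly sanctioned; your extension to (2) via the matching palindromicity degrees of $l^*_\sigma$ and $l_{\Delta,\sigma}$ is clean and not spelled out there. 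Your derivations of (3) and (4) from Proposition~\ref{prop:schepers} are likewise more explicit than the paper's ``follows directly''. What each approach buys: the paper's citations point to the historically first (and in Borisov--Mavlyutov's case, algebraic) arguments, while yours keeps everything inside the combinatorial framework already set up in the survey section; the deep Hodge-theoretic input is the same in either case, entering through Theorem~\ref{thm:local_h}(1).

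One small oversight in your degree count for (4): the bound $\deg l^*_F \le \dim(F)$ obtained from (3) does not apply to $F = \emptyset$, since $l^*_\emptyset = 1$ has degree $0 > -1 = \dim(\emptyset)$. Handle that summand separately: it equals $g_P(t)$, of degree at most $\lfloor d/2 \rfloor < d$, so it still does not contribute in degree $d$. Also note that Propositions~\ref{prop:schepers} and~\ref{prop:decomp} appear after Theorem~\ref{thm:lstar} in the paper but do not logically depend on it (the former is M\"obius inversion of Definition~\ref{def:lstar}, the latter is proved in \cite{Katz2016Local}), so there is no circularity---only a reordering of the exposition.
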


\begin{proof}
Let us give the corresponding references: 
(1) This was conjectured by Stanley \cite[Conj.7.14]{Stanley1992Subdivisions} and proven by Karu \cite{Karu08}. Using the $\tilde{S}$-notation for $\ell^*$ it also follows from its interpretation as the Hilbert function of a graded vector space by Borisov, Mavlyutov \cite[Prop.~5.5]{BorisovMavlyutov}. (2) This was observed in \cite[Remark 5.4]{BorisovMavlyutov}. (3) This follows directly, see also \cite[Cor.2.16(2)]{NS13}. (4) For this observation, see \cite[Example~4.7]{batyrev2008combinatorial}.
\end{proof}

In particular, the number $\intr_\Z(P)$ of interior lattice points completely determines the local $h^*$-polynomial up to dimension $2$. If $d=0$, then $\ell^*_P(t)=0$; if $d=1$, then $\ell^*_P(t) = \intr_\Z(P) t$; and if $d=2$, then $\ell^*_P(t) = \intr_\Z(P) t + \intr_\Z(P) t^2$.

\subsection{Decomposing and relating the $h,\ell,h^*,\ell^*$-polynomials}\label{subsec:decompositions}

The following classical result by Betke and McMullen, generalized by Katz and Stapledon, explains the relation of $h^*$-polynomials to $h$-polynomials of a {\em lattice} subdivision (i.e., a polyhedral subdivision whose vertices are lattice points). Recall that a lattice triangulation is called {\em unimodular} if all its simplices are unimodular simplices. 

\begin{thm}
    Let $P$ be a lattice polytope with a lattice subdivision $\Delta$. Then the following holds:
    \[h^*_P(t) = \sum_{\sigma \in \Delta}\, \ell^*_\sigma(t)\, h_{\link(\Delta,\sigma)}(t).\] 
    In particular, we have $h_\Delta(t) \le h^*_P(t)$ coefficientwise, where we have equality if and only if the local $h^*$-polynomial of every non-empty face of $\Delta$ vanishes. If $\Delta$ is a lattice triangulation, then this is equivalent to $\Delta$ being a unimodular triangulation.
    \label{thm:bm}
\end{thm}

\begin{proof}

This is Lemma~7.12(3) of \cite{Katz2016Local}, generalizing \cite{BetkeMcMullen85}. We recall that the consequence follows from the nonnegativity of the occuring polynomials and the fact that the $h$-polynomials have constant term $1$. Second, the combinatorial description of the $h^*$- and $\ell^*$-polynomial of a lattice simplex, Example~\ref{ex:simplex}, implies that a lattice simplex $S$ is a unimodular simplex if and only if $\ell^*_\sigma(t) = 0$ for all non-empty faces $\sigma$ of $S$. 
\end{proof}

Let us note that this result was one motivation for Stanley to define local $h$-polynomials, as these allowed him to prove an analogous result in the combinatorial setting, namely, Proposition~\ref{prop:h-decomp} above. And similar to that formula positively expressing the $h$-polynomial of a subdivision into local $h$-polynomials and toric $h$-polynomials, one can also decompose the $h^*$-polynomial of a lattice polytope positively into local $h^*$-polynomials and toric $h$-polynomials of its face poset.

\begin{cor}
Let $P$ be a lattice polytope. Then
\[h^*_P(t) = \sum_{\emptyset \leq F \leq P} \ell^\ast_F(t) g_{[F,P)}(t)=\sum_{\emptyset \leq F \leq P} \ell^\ast_F(t) h_{[F,P]}(t).\]
In particular, $\ell^*_P(t) + g_P(t) = \ell^*_P(t) + h_{[\emptyset,P]}(t) \le h^*_P(t)$ coefficientwise.
\label{cor:schepers}
\end{cor}

A proof in greater generality is given in \cite[Prop.~2.9]{Schepers2012Stringy}, see also \cite[Cor.~1.1]{Karu08} and \cite[Prop.~2.5]{NS13}. We remark that Corollary~\ref{cor:schepers} gives significance to thinking of the local $h^*$-polynomial as the "Ehrhart core" of the $h^*$-polynomial. This is most prominently clear in the case of lattice simplices, see Example~\ref{ex:simplex}.

Now, just as the (generalized) Betke-McMullen formula transparently separates the lattice data (the $\ell^*$-polynomials of the cells) and combinatorial data (the $h$-polynomials of the links of the cells) of the $h^*$-polynomial of the support of a lattice subdivision, the same can be done for the local $h^*$-polynomial. This was observed in \cite{Report}, see also \cite[Lemma~7.12(4)]{Katz2016Local}.

\begin{prop}
Let $P$ be a lattice polytope with a lattice subdivision $\Delta$. Then the following holds:
\[\ell^*_P(t) = \sum_{\sigma \in \Delta} \ell^*_\sigma(t)\; \ell_{\Delta,\sigma}(t).\]
In particular, $\ell_\Delta(t) \le \ell^*_P(t)$ coefficientwise, with equality if $\Delta$ is a unimodular triangulation.
\label{prop:decomp}
\end{prop}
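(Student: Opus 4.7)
The plan is to substitute the Betke--McMullen formula (Theorem~\ref{thm:bm}) into the definition of the local $h^*$-polynomial (Definition~\ref{def:lstar}) and then rearrange the resulting double sum so that the inner sum is precisely the defining formula for the relative local $h$-polynomial $l_{\Delta,\sigma}(t)$ from Definition~\ref{def:local-h}.

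More concretely, I would start from
\[l^*_P(t) = \sum_{\emptyset \leq F \leq P} (-1)^{\dim(P) - \dim(F)}\, h^*_F(t)\, g_{(F,P]^\ast}(t)\]
and for every face $F \leq P$ apply Theorem~\ref{thm:bm} to the induced lattice triangulation $\Delta_F$ of $F$, replacing $h^*_F(t)$ by $\sum_{\sigma \in \Delta_F} l^*_\sigma(t)\, h_{\link(\Delta_F,\sigma)}(t)$. The pair $(F,\sigma)$ with $F \leq P$ and $\sigma \in \Delta_F$ is the same data as a simplex $\sigma \in \Delta$ together with a face $F$ of $P$ satisfying $\sigma \subseteq F$, so swapping the order of summation yields
\[l^*_P(t) = \sum_{\sigma \in \Delta} l^*_\sigma(t) \sum_{\sigma \subseteq F \leq P} (-1)^{\dim(P) - \dim(F)}\, h_{\link(\Delta_F,\sigma)}(t)\, g_{(F,P]^\ast}(t),\]
and the inner sum is by definition $l_{\Delta,\sigma}(t)$. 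This establishes the main identity.

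For the \emph{in particular} statement, I would isolate the summand $\sigma = \emptyset$, where by convention $l^*_\emptyset(t) = 1$ and $l_{\Delta,\emptyset}(t) = l_\Delta(t)$. Since the remaining summands $l^*_\sigma(t)\, l_{\Delta,\sigma}(t)$ are coefficientwise nonnegative by Theorem~\ref{thm:lstar}(1) combined with Theorem~\ref{thm:local_h}(1), the inequality $l_\Delta(t) \leq l^*_P(t)$ follows at once. If additionally $\Delta$ is unimodular, then every nonempty $\sigma \in \Delta$ is a unimodular simplex (or a single lattice point), so $l^*_\sigma(t) = 0$ by Example~\ref{ex:simplex} together with the convention that a point has vanishing local $h^*$-polynomial; only the $\sigma = \emptyset$ term survives, giving equality.

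I do not anticipate any serious obstacle: the argument is essentially a formal Möbius-style manipulation, and the only thing that needs genuine care is the bookkeeping, namely checking that the sign $(-1)^{\dim(P)-\dim(F)}$, the factor $g_{(F,P]^\ast}(t)$, and the indexing set $\{F : \sigma \subseteq F \leq P\}$ coming out of the interchange of summation match Definition~\ref{def:local-h} on the nose. Once the Betke--McMullen substitution has been inserted, the identity is forced and no further nontrivial input is required.
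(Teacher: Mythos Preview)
Your argument is correct and is precisely the natural approach: substitute the Betke--McMullen expansion of each $h^*_F$ into the defining alternating sum for $l^*_P$, interchange the two summations, and recognize the inner sum as Definition~\ref{def:local-h}. The paper does not spell out a proof of the identity but only cites \cite{Report} and \cite[Lemma~7.12(4)]{Katz2016Local}; for the ``in particular'' part it invokes exactly the nonnegativity of the relative local $h$-polynomial (Theorem~\ref{thm:local_h}(1)) that you use, together with the vanishing of $l^*_\sigma$ for unimodular simplices, so your treatment is in full agreement with what the paper indicates.
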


Here, we critically used the nonnegativity of the relative local $h$-polynomial,~Theorem~\ref{thm:local_h}(1), for the consequence. In particular, we get another proof of the nonnegativity of the local $h^*$-polynomial. Moreover, as already observed in \cite{Report},this implies that the unimodality of the $\ell^*$-vector is an {\em intrinsic} obstruction for a lattice polytope to have a unimodular triangulation (apply Theorem~\ref{thm:local_h}(3) with $\sigma=\emptyset$). In fact, it is enough to have unimodality of the "local box polynomials" (this is Remark~7.23 in \cite{Katz2016Local}). Such triangulations where called {\em box unimodal} in \cite{unimodal}.

\begin{cor}
If $P$ admits a regular triangulation such that the local $h^*$-polynomials of each cell have unimodal coefficients (e.g., the triangulation is unimodular), then its local $h^*$-polynomial has unimodal coefficients.\label{cor:unimodcoeffs}
\end{cor}

This was used in \cite{Liam-Gustafsson} to prove the unimodality of the local $h^*$-polynomial of $s$-lecture hall order polytopes.

\smallskip

For the purpose of this paper, the following innocent looking consequence of the nonnegativity of relative local $h$-polynomials is crucial. 

\begin{cor}
Let $P$ and $P'$ be lattice polytopes such that $P'$ is obtained from $P$ by refining the lattice. Then $h^*_P(t)\le h^*_{P'}(t)$ and $\ell^*_P(t) \le \ell^*_{P'}(t)$ coefficientwise.
\label{cor:lattice}
\end{cor}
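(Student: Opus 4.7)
The plan is to reduce the claim to the case of lattice simplices by applying the decomposition formulas of Theorem~\ref{thm:bm} and Proposition~\ref{prop:decomp} to a triangulation that works for both lattices simultaneously.

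First, I would fix a lattice triangulation $\Delta$ of $P$ with respect to the coarser lattice $M$. Since $M$ is contained in the refined lattice $M'$, every vertex of $\Delta$ is automatically an $M'$-lattice point, so $\Delta$ serves equally well as a lattice triangulation of $P'$. The crucial observation is that the quantities $h_{\link(\Delta,\sigma)}(t)$ and $l_{\Delta,\sigma}(t)$ are purely combinatorial invariants of the face poset of $\Delta$, hence unchanged by the lattice refinement. Writing Theorem~\ref{thm:bm} and Proposition~\ref{prop:decomp} for both lattices (with subscripts indicating which lattice is used) yields
\[h^*_{P,M}(t) = \sum_{\sigma \in \Delta} l^*_{\sigma,M}(t)\, h_{\link(\Delta,\sigma)}(t), \qquad h^*_{P',M'}(t) = \sum_{\sigma \in \Delta} l^*_{\sigma,M'}(t)\, h_{\link(\Delta,\sigma)}(t),\]
and analogously for $l^*_{P,M}(t)$ and $l^*_{P',M'}(t)$ using $l_{\Delta,\sigma}(t)$ in place of $h_{\link(\Delta,\sigma)}(t)$. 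Because both $h_{\link(\Delta,\sigma)}(t)$ and $l_{\Delta,\sigma}(t)$ have nonnegative coefficients by Theorem~\ref{thm:h-pol} and Theorem~\ref{thm:local_h}(1), it suffices to establish the coefficientwise inequalities $h^*_{\sigma,M}(t) \le h^*_{\sigma,M'}(t)$ and $l^*_{\sigma,M}(t) \le l^*_{\sigma,M'}(t)$ for every simplex $\sigma$ of $\Delta$.

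Finally, the simplex case is immediate from the description in Example~\ref{ex:simplex}: the coefficient of $t^i$ in $h^*_{\sigma,M}(t)$ (respectively, $l^*_{\sigma,M}(t)$) counts the lattice points of $M$ (respectively, interior lattice points of $M$) at height $i$ in the half-open parallelepiped spanned by the vertices of $\sigma \times \{1\}$. Since the underlying half-open parallelepiped is independent of the lattice and $M \subseteq M'$, each such count can only grow when passing from $M$ to $M'$, completing the argument. The only real subtlety is choosing $\Delta$ to be $M$-lattice (so that it is \emph{a fortiori} $M'$-lattice), ensuring the combinatorial factors on both sides of the comparison agree; after that the reduction is mechanical.
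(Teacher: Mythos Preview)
Your argument is essentially the paper's proof spelled out in detail: the same reduction via Theorem~\ref{thm:bm} and Proposition~\ref{prop:decomp} to the simplex case, where Example~\ref{ex:simplex} gives the monotonicity of $l^*_\sigma$ under lattice refinement directly. One minor correction: the nonnegativity of $h_{\link(\Delta,\sigma)}(t)$ is not Theorem~\ref{thm:h-pol} (which concerns the toric $h$-polynomial of a polytope) but the nonnegativity of the $h$-vector of a Cohen--Macaulay simplicial complex, as remarked in the paper right after Theorem~\ref{thm:bm}.
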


\begin{proof}

This follows from Theorem~\ref{thm:bm}, respectively, Proposition~\ref{prop:decomp}, the explicit combinatorial description of the $\ell^*$-polynomial of a lattice simplex, see Example~\ref{ex:simplex}, and the nonnegativity of the $h$-polynomial, respectively, of the relative local $h$-polynomial.
\end{proof}

We remark that for $h^*$-polynomials this lattice-monotonicity can also easily be seen combinatorially, e.g., using half-open decompositions (see \cite{reciprocity}). However, for local $h^*$-polynomials there seems to be no such combinatorial argument known. This is also true for the next result. Note that by Stanley's famous monotonicity result, the $h^*$-polynomial is coefficientwise monotone with respect to inclusion. However, this is not true for the local $h^*$-polynomial. Still, it holds when one considers subpolytopes that do not lie on the boundary.

\begin{cor}
Let $P$ and $Q$ be lattice polytopes such that ${\rm relint}(Q) \subseteq \intr(P)$ (for instance, $\dim(Q)=\dim(P)$). Then 
$\ell^*_Q(t) \le \ell^*_{P}(t)$ coefficientwise.\label{cor:mono}
\end{cor}

\begin{proof}
Choose a lattice subdivision $\Delta$ of $P$ that contains $Q$ as a cell. Then by the nonnegativity of the appearing polynomials, we see from Proposition~\ref{prop:decomp} that $\ell^*_Q(t) \ell_{\Delta,Q}(t) \le \ell^*_P(t)$ coefficientwise. It remains to observe that since $Q$ is in the relative interior of $P$, it follows directly from Definition~\ref{def:local-h} that $\ell_{\Delta,Q}(t) = h_{\link(\Delta,Q)}(t)$ and hence has constant coefficient~$1$.
\end{proof}

Finally, let us just shortly mention that in the Katz-Stapledon paper \cite{Katz2016Local}, motivated by algebraic and tropical geometry, the $h^*$- and $\ell^*$-polynomials are further refined to bivariate (and even trivariate) polynomials leading to the notion of $h^*$- and $\ell^*$-diamonds. As we will use the following notation later for the computation of the local $h^*$-polynomial in dimension three, let us introduce it here. 

\begin{defn}
Let $P$ be a lattice polytope of dimension $d$. Then we define
\[h^*_P(u,v) := \sum_{F \le P}v^{\dim(F)+1} \ell^*_F(uv^{-1}) g_{[F,P)}(uv).\]
\label{def:mixed}
\end{defn}

Note that $h^*_P(t,1)=h^*_P(t)$. In \cite[Remark~7.7]{Katz2016Local} it is shown that
\[h^*_P(u,v) = 1 + uv \sum_{0 \le p,q \le d-1} h^*_{p,q} u^p v^q,\]
where $h^*_{i,d-1-i} = \ell^*_i$ for $i=1,\ldots, d$. These refined invariants satisfy many beautiful properties. Let us present here at least one such consequence, namely, the following lower bound theorem on the coefficients of the $\ell^*$-polynomial \cite[p.184]{Katz2016Local}.

\begin{thm}
    Let $P$ be a lattice polytope of dimension $d$ with $\ell^*_P(t)=\sum_{i=1}^d \ell^*_i t^*$. Then 
    \[\ell^*_1 \le \ell^*_i \quad \text{ for } i=2, \ldots, d.\]
    \label{thm:lower}
\end{thm}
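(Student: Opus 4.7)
The plan is to establish the inequality using the Katz–Stapledon refinement machinery, in the same spirit as the proof of Theorem~\ref{thm:local_h}(3). First, I would reduce to a primitive range by invoking the palindromicity of $l^*_P(t)$ from Theorem~\ref{thm:lstar}(2): the relation $l^*_i = l^*_{d+1-i}$ gives $l^*_1 = l^*_d$, so the case $i = d$ is an equality, and it suffices to prove $l^*_1 \le l^*_i$ for $2 \le i \le \lfloor (d+1)/2 \rfloor$; reapplying the symmetry at the index $d+1-i$ then handles the remaining values.

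Next, I would fix a regular lattice triangulation $\Delta$ of $P$---for example a pulling triangulation that uses every lattice point of $P$ as a vertex---and invoke Proposition~\ref{prop:decomp} to write
\[
l^*_P(t) \;=\; \sum_{\sigma \in \Delta} l^*_\sigma(t)\, l_{\Delta,\sigma}(t),
\]
which expresses $l^*_P(t)$ as a non-negative integer combination of relative local $h$-polynomials. By Theorem~\ref{thm:local_h}, each $l_{\Delta,\sigma}(t)$ is palindromic with non-negative integer coefficients and, since $\Delta$ is regular, unimodal. At the coefficient level this already encodes a rigid structure that, combined with the symmetry of the outer polynomial, one hopes to promote to the inequality in question.

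The decisive step is then to lift the above decomposition to the level of intersection cohomology via the $l^*$-diamond formalism of \cite{Katz2016Local}: one realizes $l^*_P(t)$ as the Poincar\'e polynomial of a graded vector space $V^\bullet = \bigoplus_i V^i$, obtained as a primitive summand inside the intersection cohomology associated with $\Delta$, that carries a Lefschetz action by a degree-one class $\omega$. Karu's relative hard Lefschetz theorem \cite{Karu2019Relative}---the same input that removes the rationality hypothesis in Theorem~\ref{thm:local_h}(3)---then implies that $\omega^{i-1}\colon V^1 \to V^i$ is injective throughout the primitive range $i \le \lfloor (d+1)/2 \rfloor$, yielding $l^*_1 = \dim V^1 \le \dim V^i = l^*_i$. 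The main obstacle is the construction of this Lefschetz-admissible vector space out of the purely combinatorial $l^*$-diamond data; this is the technical heart of \cite{Katz2016Local}, and it is precisely what Karu's relative hard Lefschetz enables without any rationality assumption on $P$.
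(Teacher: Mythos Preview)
The paper does not supply its own proof of this statement; it simply records it as a consequence of the Katz--Stapledon theory and cites \cite[p.~184]{Katz2016Local}. Your proposal correctly identifies that reference's machinery---the $l^*$-diamond refinement together with the relative hard Lefschetz theorem of \cite{Karu2019Relative}---as the source of the inequality, so in that sense you are aligned with what the paper invokes.

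Two remarks on the sketch itself. First, the detour through Proposition~\ref{prop:decomp} and a chosen regular triangulation is not actually used in your decisive step; once you pass to the $l^*$-diamond and a Lefschetz structure, the triangulation decomposition has done no work, so that paragraph could be dropped. Second, your final picture of a single graded module $V^\bullet$ with a Lefschetz operator $\omega$ is a mild oversimplification of how the bound is extracted in \cite{Katz2016Local}: there the inequality comes from the bigraded local $h^*$-diamond, whose entries satisfy symmetry and unimodality constraints (the latter via relative hard Lefschetz), and one reads off $l^*_1 \le l^*_i$ from those constraints rather than from a single injective power of $\omega$. The Lefschetz input is the right one, but the packaging is through the diamond inequalities rather than a one-line injectivity argument.
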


\section{Definition, basic properties and examples of thin polytopes}
\label{sec:thin}

\subsection{Main definition and known results from \cite{GKZ94}}

The following notion is the main focus of this paper. 

\begin{defn} A lattice polytope $P$ is called {\em thin} if its local $h^*$-polynomial $\ell^*_P$ vanishes. By the nonnegativity of the coefficients, Theorem~\ref{thm:lstar}(1), this is equivalent to $\ell^*_P(1)=0$.
\end{defn}

Let us note that lattice polytopes of dimension $0$ as well as unimodular simplices are thin, see Example~\ref{ex:simplex}. We remark that thin polytopes naturally appear in Theorem~\ref{thm:bm}.

\begin{rmk}
Thin simplices were first investigated in \cite[11-4-B]{GKZ94} in the context of regular $A$-determinants and $A$-discriminants, more precisely, in the characterization of so-called $D$-equivalence classes of regular triangulations of $A$. There a lattice simplex $S$ was defined to be {\em thin} if its {\em Newton number} $\nu(S)$ equals zero. Here, the Newton number is defined as follows:
\begin{equation}
    \nu(S) := \sum_{\emptyset \le F \le S} (-1)^{\dim(S)-\dim(F)} \vol_\Z(F) =0,
    \label{newton}
    \end{equation}
where, $\vol_\Z(\emptyset):=1$ (also $\vol_\Z(F)=1$ if $\dim(F)=0$). Recall from Example~\ref{ex:simplex}, that $\vol_\Z(F)=h^*_F(1)$ counts the number of lattice points in the half-open parallelepiped over $F$. Hence, by inclusion-exclusion, it is straightforward to deduce $\nu(S)=\ell^*_S(1)$, the number of interior lattice points in the half-open parallelepiped over $S$. Thus, for lattice simplices the definitions agree. Let us note that in \cite{GKZ94} the nonnegativity of $\nu(S)$ follows from quite deep algebro-geometric arguments, while it is combinatorially obvious from the interpretation of $\ell^*_S$ as the box polynomial of the lattice simplex $S$.\label{rem:gkz} The reader should also be warned that the expression in equation~\eqref{newton} may be negative for lattice {\em polytopes}. For instance, it equals $-1$ for the 0/1-cube $[0,1]^3$.
\end{rmk}

Thin simplices were classified in \cite{GKZ94} up to dimension $2$. Here, we can deduce the following statement directly from Theorem~\ref{thm:lstar}(4). Let us define a lattice polytope to be {\em hollow} if it has no lattice points in its interior. Here, a $0$-dimensional lattice polytope is not hollow (but thin).

\begin{prop}
Thin polytopes of dimension $> 0$ are hollow. The converse also holds in dimensions $1$ and $2$.
\label{prop:dim2}
\end{prop}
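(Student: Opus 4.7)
The plan is to read the proposition as a direct corollary of Theorem~\ref{thm:lstar}(4) together with the explicit low-dimensional formulas stated immediately after it. There is no real obstacle: all the work has been done in the preceding subsection, and one only needs to unwind the definitions.

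For the first assertion, suppose $P$ is thin of dimension $d>0$. By definition, $l^*_P(t)$ is the zero polynomial, so in particular the coefficient $l^*_1$ vanishes. By Theorem~\ref{thm:lstar}(4), this coefficient equals the number of interior lattice points of $P$. Hence $P$ is hollow.

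For the converse in dimensions $1$ and $2$, I would invoke the explicit formulas recorded right after Theorem~\ref{thm:lstar}: if $\dim(P)=1$ then $l^*_P(t) = \intr_\Z(P)\, t$, and if $\dim(P)=2$ then $l^*_P(t) = \intr_\Z(P)\, t + \intr_\Z(P)\, t^2$. In either case, the hypothesis that $P$ is hollow means $\intr_\Z(P)=0$, which immediately forces $l^*_P(t)=0$, i.e., $P$ is thin.

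If one wished to avoid quoting those explicit formulas as a black box, one could derive them on the spot: by Theorem~\ref{thm:lstar}(2) the polynomial $l^*_P(t)$ of a $d$-dimensional lattice polytope is palindromic of degree at most $d$ with zero constant term, so for $d\le 2$ it is determined by the single coefficient $l^*_1$, which by Theorem~\ref{thm:lstar}(4) equals $\intr_\Z(P)$. This completes the proof; the only thing to note is the standing convention that a $0$-dimensional polytope is declared not to be hollow (but is thin), which is why the dimension hypothesis $d>0$ is needed in the first assertion and why the converse statement is phrased only for $d\in\{1,2\}$.
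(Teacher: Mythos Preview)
Your argument is correct and is exactly the one the paper intends: the statement is presented there as an immediate consequence of Theorem~\ref{thm:lstar}(4) together with the explicit low-dimensional formulas for $l^*_P(t)$ recorded right after it. There is nothing to add.
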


In particular, $\Delta_1$ is the only thin polytope of dimension $1$. Hollow polytopes in dimension $2$ are well-known. They are either isomorphic to $2 \Delta_2$ or have lattice width $1$ (i.e., all vertices lie on two parallel hyperplanes of lattice distance one). Note that hollow three-dimensional lattice polytopes do not have to be thin., e.g., $2 \Delta_3$ and $[0,1]^3$ are not thin.

One important construction for thin polytopes is to take lattice pyramids.

\begin{defn}
Let $P \subset \R^d$ be a lattice polytope. Then 
\[\conv(P \times \{0\}, \{0\} \times \{1\}) \subset \R^d \times \R\]
is called the {\em lattice pyramid} over $P$. By convention, a lattice point is also considered a lattice pyramid.
\end{defn}

It is well-known that the $h^*$-polynomial, and particularly the degree, does not change under taking lattice pyramids. The following result has already been observed in \cite{GKZ94} for lattice simplices and in general in \cite{batyrev2008combinatorial} for lattice polytopes.

\begin{prop}
Lattice pyramids over arbitrary lattice polytopes are thin.\label{prop:pyr}
\end{prop}

Using this notation we can state the classification of thin simplices up to dimension two as follows.

\begin{cor}
A lattice simplex of dimension at most $\le 2$ is thin if and only if it is isomorphic to $2 \Delta_2$ or it is a lattice pyramid.
\end{cor}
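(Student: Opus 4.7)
The ``if'' direction is immediate: lattice pyramids are thin by Proposition~\ref{prop:pyr}, and $2\Delta_2$ is a hollow two-dimensional lattice polytope, hence thin by Proposition~\ref{prop:dim2}. The main work is the ``only if'' direction, which by Proposition~\ref{prop:dim2} reduces to classifying hollow lattice simplices of dimension at most $2$ up to unimodular equivalence and showing each is either $2\Delta_2$ or a lattice pyramid.

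In dimension $0$ the simplex is a point, which is a lattice pyramid by convention. In dimension $1$, a thin $1$-simplex is a hollow segment, hence of lattice length $1$; up to isomorphism it is $\Delta_1$, which is the lattice pyramid over a point. So the only substantive case is $\dim S = 2$. By Proposition~\ref{prop:dim2}, such a thin $S$ is a hollow lattice triangle, and I would invoke the well-known classification of hollow lattice polygons recalled right after Proposition~\ref{prop:dim2}: up to unimodular equivalence $S$ is either $2\Delta_2$ or $S$ has lattice width $1$.

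It remains to show that a lattice triangle of lattice width $1$ is a lattice pyramid. After a unimodular transformation we may assume the two vertices $a,b$ of $S$ on one of the parallel supporting lines lie on $\{y=0\}$ and the third vertex $c$ lies on $\{y=1\}$. By an integer translation in the $x$-direction we may further assume $a=(0,0)$, so $b=(k,0)$ with $k\in\Z_{\ge 1}$ and $c=(m,1)$ with $m\in\Z$. Applying the unimodular shear $(x,y)\mapsto(x-my,y)$ fixes $a$ and $b$ and moves $c$ to $(0,1)$. Hence $S$ is unimodularly equivalent to $\conv((0,0),(k,0),(0,1))$, which is the lattice pyramid over the segment $\conv((0,0),(k,0))$. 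This completes the classification.

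The only potentially delicate step is the appeal to the classification of hollow lattice polygons (either $2\Delta_2$ or lattice width~$1$); once this is granted, the rest is the short unimodular normalization above. Everything else follows by combining Propositions~\ref{prop:dim2} and~\ref{prop:pyr}.
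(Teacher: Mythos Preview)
Your argument is correct and follows the same route as the paper, which does not give an explicit proof but presents the corollary as an immediate consequence of Proposition~\ref{prop:dim2}, Proposition~\ref{prop:pyr}, and the classification of hollow lattice polygons recalled right after Proposition~\ref{prop:dim2}. You simply make explicit the one step the paper leaves to the reader, namely that a lattice triangle of width~$1$ is unimodularly equivalent to a lattice pyramid over a segment.
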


\begin{rmk}
In \cite{GKZ94} {\em thin triangulations} were intensively studied. Recently, this notion has also been investigated by \cite{Thintriangulations} where it was completely characterized up to dimension $3$. As Stanley observed at the end of Section~7 in \cite{Stanley1992Subdivisions}, a thin triangulation may be defined by the vanishing of its local $h$-polynomial. Now, it follows from Proposition~\ref{prop:decomp} that {\em all} lattice triangulations of thin polytopes are thin. This seems to be a quite strong combinatorial obstruction worth of further study.
\end{rmk}

\begin{rmk}
    By Corollary~\ref{cor:lattice}, a thin polytope stays thin if the lattice is coarsened. We do not know of a purely combinatorial proof of this fact.\label{rmk:refine}
\end{rmk}

\begin{rmk}
    If a lattice polytope is contained in a thin polytope but not in its boundary, then it is also thin. This non-trivial fact follows from Corollary~\ref{cor:mono}.
\end{rmk}

\begin{rmk}
    Let us note that thin simplices turn up in \cite{unimodal} when studying conditions for unimodality of (local) $h^*$-polynomials in the context of box unimodal triangulations mentioned before Corollary~\ref{cor:unimodcoeffs}. Here, let us recall the following observation: if $P$ admits a regular triangulation $\Delta$ such that every non-empty face of $\Delta$ is thin, then its local $h^*$-polynomial equals the local $h$-polynomial of $\Delta$ and its $h^*$-polynomial equals the $h$-polynomial of $\Delta$, see Proposition~\ref{prop:decomp} and Theorem~\ref{thm:bm}. Now, in \cite{unimodal} it is asked whether every IDP lattice polytope has a regular triangulation into lattice simplices that have vanishing or monomial $\ell^*$-polynomial. The motivation was that the existence of a box unimodal triangulation of an IDP reflexive polytope implies unimodality of its $h^*$-polynomial. While the previous question is still open, a proof of the latter result using completely different methods was recently announced in \cite{Karim}.
\end{rmk}

\subsection{Two classes of examples of thin polytopes}

\label{subsec:two}
Let us describe two ways to get thin polytopes in higher dimensions.

The first observation is that lattice polytopes of small degree (in other words, "very hollow" lattice polytopes) are always thin.

\begin{defn}
We say, $P$ is {\em trivially thin} if $\dim(P) \ge 2 \deg(P)$.
\end{defn}

\begin{prop}
Trivially thin polytopes are thin.
\end{prop}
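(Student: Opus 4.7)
The plan is to read this off directly from Theorem~\ref{thm:lstar}. Set $d = \dim(P)$ and $s = \deg(P)$, so by hypothesis $d \geq 2s$. Recall from Theorem~\ref{thm:lstar}(3) that $\deg(l^*_P(t)) \leq \deg(h^*_P(t)) = s$ and that the subdegree of $l^*_P(t)$ is at least $\codeg(P) = d+1-s$. Writing $l^*_P(t) = \sum_{i=0}^{d+1} l^*_i\,t^i$, this means $l^*_i = 0$ whenever $i > s$ or $i < d+1-s$.

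The inequality $d \geq 2s$ gives $d+1-s \geq s+1 > s$, so every index $i \in \{0,1,\dots,d+1\}$ either exceeds $s$ or falls below $d+1-s$ (in fact, the two ranges together already cover all integers). Thus every coefficient $l^*_i$ vanishes, and $P$ is thin by definition.

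I do not expect any real obstacle here: this is essentially a one-line consequence of the degree/codegree bounds on $l^*_P$ combined with the arithmetic assumption. (One could alternatively invoke palindromicity from Theorem~\ref{thm:lstar}(2) together with the degree bound to reach the same conclusion, but the degree/subdegree statement already suffices.)
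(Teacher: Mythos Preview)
Your proof is correct and follows essentially the same approach as the paper: both use Theorem~\ref{thm:lstar}(3) to bound the degree and subdegree of $l^*_P$, and then observe that the trivially thin condition $\dim(P)\ge 2\deg(P)$ (equivalently $\deg(P)<\codeg(P)$) forces these ranges to be disjoint, so $l^*_P=0$.
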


\begin{proof}
A lattice polytope $P$ is trivially thin if and only if $\deg(P) < \codeg(P)$. Now, the statement follows from Theorem~\ref{thm:lstar}(3).
\end{proof}

Typical examples of trivially thin polytopes are Cayley polytopes with many factors. We will talk about Cayley polytopes with two factors in much more detail later (see Definition~\ref{def:cayley} and Remark~\ref{rem:cayley}), however, let us already now give the definition of a Cayley polytope to make the previous statement precise. For this, we denote by a {\em lattice projection} $\R^d \to \R^m$ an affine-linear map mapping $\Z^d$ surjectively onto $\Z^m$. If there is a lattice projection mapping a $d$-dimensional lattice polytope $P$ onto a unimodular simplex $\Delta_k$ with $k \ge 1$, then $P$ is called a {\em Cayley polytope} with $k+1$ factors (namely, the fibers of the vertices of $\Delta_k$). One can easily deduce from \cite[Proposition~1.12]{batyrev2008combinatorial} that $P$ is trivially thin if $k \ge d/2$. An alternative way to view this is also the following. Take $r$ lattice polytopes $P_0, \ldots, P_{r-1}$ in $\R^m$. Then the {\em Cayley sum} of $P_0, \ldots, P_{r-1}$ is defined as the convex hull of $P_0 \times \{0\}$ and $P_i \times \{e_i\}$ for $i=1, \ldots, r-1$ in $\R^{m+r-1}$. It is trivially thin if $r \ge m+1$. Note that a Cayley sum is a Cayley polytope, and every Cayley polytope is isomorphic to a Cayley sum.

\medskip
A second way to get high-dimensional thin polytopes is to use free joins. 

\begin{defn}
Let $P \subset \R^n$ and $Q \subset \R^m$ be lattice polytopes. We call 
\[P \circ_\Z Q := \conv(P \times \{0\} \times \{0\}, \{0\} \times P \times \{1\}) \subset \R^n \times \R^m \times \R,\]
the {\em free join} of $P$ and $Q$.
\end{defn}

For instance, the free join of $[0,1]$ with itself is a unimodular $3$-simplex. Note that isomorphic factors lead to isomorphic free joins. From the Ehrhart-theoretic viewpoint the free join construction is important because of the following multiplicativity property, see \cite[Lemma~1.3]{Join} and \cite[Remark~4.6(5)]{NS13}.

\begin{prop}
Let $P \subset \R^n$ and $Q \subset \R^m$ be lattice polytopes. Then \[h^*_{P \circ_\Z Q}(t) = h^*_P(t) \, h^*_Q(t),\; \text{ and }\; \ell^*_{P \circ_\Z Q}(t) = \ell^*_P(t) \, \ell^*_Q(t).\]
\label{prop:mult}
\end{prop}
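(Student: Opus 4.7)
For the $h^\ast$-identity I would work directly with the Ehrhart series. A lattice point in $k(P\circ_\Z Q)$ has integer last coordinate $j\in\{0,1,\ldots,k\}$, with the first $n$ coordinates lying in $(k-j)P\cap\Z^n$ and the next $m$ in $jQ\cap\Z^m$; therefore
\[
\bigl|k(P\circ_\Z Q)\cap\Z^{n+m+1}\bigr|=\sum_{j=0}^{k}\bigl|(k-j)P\cap\Z^n\bigr|\cdot\bigl|jQ\cap\Z^m\bigr|.
\]
By the Cauchy product formula this gives $\mathrm{Ehr}_{P\circ_\Z Q}(t)=\mathrm{Ehr}_P(t)\cdot\mathrm{Ehr}_Q(t)$, and since $\dim(P\circ_\Z Q)+1=(\dim P+1)+(\dim Q+1)$ the denominators $(1-t)^{\bullet}$ cancel, yielding $h^\ast_{P\circ_\Z Q}(t)=h^\ast_P(t)\cdot h^\ast_Q(t)$.

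For the $l^\ast$-identity I would plug into Definition~\ref{def:lstar} and factor the resulting double sum. Each face $H$ of $P\circ_\Z Q$ is uniquely a free join $H=F\circ_\Z G$ with $F\leq P$, $G\leq Q$ (using $\emptyset\circ_\Z G=G$ and the convention $h^\ast_\emptyset=1$), and $\dim H=\dim F+\dim G+1$, so the sign splits as $(-1)^{\dim(P\circ_\Z Q)-\dim H}=(-1)^{\dim P-\dim F}(-1)^{\dim Q-\dim G}$. Applying the $h^\ast$-identity from the first step to each face yields $h^\ast_H=h^\ast_F\cdot h^\ast_G$. Granting the key multiplicativity
\[
g_{(F\circ_\Z G,\,P\circ_\Z Q]^\ast}(t)=g_{(F,P]^\ast}(t)\cdot g_{(G,Q]^\ast}(t),
\]
the double sum over $(F,G)$ factors as
\[
l^\ast_{P\circ_\Z Q}(t)=\Bigl(\sum_{F\leq P}(-1)^{\dim P-\dim F}h^\ast_F\, g_{(F,P]^\ast}\Bigr)\cdot\Bigl(\sum_{G\leq Q}(-1)^{\dim Q-\dim G}h^\ast_G\, g_{(G,Q]^\ast}\Bigr)=l^\ast_P(t)\cdot l^\ast_Q(t).
\]

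The hard part is this $g$-polynomial multiplicativity. The closed interval $[F\circ_\Z G,\,P\circ_\Z Q]$ in the face lattice of $P\circ_\Z Q$ is isomorphic as a poset to the product of Eulerian posets $[F,P]\times[G,Q]$; dualising, one has $(F\circ_\Z G,\,P\circ_\Z Q]^\ast\cong([F,P]^\ast\times[G,Q]^\ast)\setminus\{\hat{1}\}$. The Eulerian identity $g_{A\setminus\hat{1}}(t)=h_A(t)$ recalled in Remark~\ref{rmk:ambiguous} then reduces the claim to the classical multiplicativity $h_{A\times B}(t)=h_A(t)\cdot h_B(t)$ of the toric $h$-polynomial under Cartesian products of Eulerian posets, established in Stanley's foundational paper~\cite{Stanley1987Generalized}. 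This can in turn be proved by induction on rank, using the recursive definitions of $f$, $g$, and $h$, or, in the rational case, geometrically via the K\"unneth theorem for the intersection cohomology of the product of the associated toric varieties.
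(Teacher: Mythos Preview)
Your argument is correct. Note that the paper does not actually prove this proposition: it simply points to \cite[Lemma~1.3]{Join} for the $h^\ast$-identity and to \cite[Remark~4.6(5)]{NS13} for the $l^\ast$-identity. So you are supplying a self-contained proof where the paper only gives citations, and your route is essentially the one taken in those references.

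A couple of minor comments. Your count of lattice points in $k(P\circ_\Z Q)$ and the resulting convolution of Ehrhart series is the standard argument and is exactly what \cite{Join} does. For the $l^\ast$-part, your reduction is right: the face lattice of a join is the product of the face lattices, so every interval $[F\circ_\Z G,\,P\circ_\Z Q]$ factors as $[F,P]\times[G,Q]$, and via Remark~\ref{rmk:ambiguous} the needed identity becomes $h_{A\times B}=h_A\,h_B$ for Eulerian posets $A,B$ (equivalently, multiplicativity of the toric $g$-polynomial under joins of polytopes). This fact is indeed well-known, but it is not stated in \cite{Stanley1987Generalized}; you will find it more readily in the $\tilde S$-formalism of \cite{BorisovMavlyutov,batyrev2008combinatorial} (where it is used precisely for this purpose), or you can supply the short inductive proof yourself. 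The induction is on total rank: for every $(a,b)\neq(\hat 1_A,\hat 1_B)$ the interval $[\hat 0,(a,b)]$ is a product of strictly smaller Eulerian posets, so the inductive hypothesis gives $g_{[\hat 0,(a,b))}=g_{[\hat 0,a)}\,g_{[\hat 0,b)}$; this already forces $f_{(A\times B)\setminus\hat 1}=f_{A\setminus\hat 1}\,f_{B\setminus\hat 1}$ (since these sums range only over elements below $\hat 1$), and palindromicity (Theorem~\ref{thm:pali}) then lets you read off the $g$-polynomials. The K\"unneth justification you mention works as well for rational face figures.
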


\begin{cor}
The free join of two lattice polytopes is thin if and only if at least one of the two factors is thin.
\end{cor}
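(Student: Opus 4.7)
The plan is essentially an immediate deduction from Proposition~\ref{prop:mult}. By definition, a lattice polytope $R$ is thin precisely when $l^*_R(t) = 0 \in \Z[t]$. Applying the multiplicativity formula
\[
l^*_{P \circ_\Z Q}(t) = l^*_P(t)\, l^*_Q(t),
\]
we see that $P \circ_\Z Q$ is thin if and only if the product $l^*_P(t)\, l^*_Q(t)$ is the zero polynomial.

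Since $\Z[t]$ is an integral domain, a product of two polynomials vanishes if and only if at least one of the factors vanishes. Hence $l^*_P(t)\, l^*_Q(t) = 0$ is equivalent to $l^*_P(t) = 0$ or $l^*_Q(t) = 0$, which by definition means that $P$ is thin or $Q$ is thin. This gives both implications of the corollary simultaneously, so the proof reduces to a single line once Proposition~\ref{prop:mult} is invoked.

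There is no real obstacle here; the only thing to note is that the phrase ``free sum'' in the statement should be read as ``free join'' $\circ_\Z$, matching the construction of Definition just above, so that Proposition~\ref{prop:mult} applies directly. No further Ehrhart-theoretic input (nonnegativity, palindromicity, etc.) is needed for this corollary.
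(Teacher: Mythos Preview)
Your proof is correct and matches the paper's approach: the corollary is stated without proof there, as it follows immediately from Proposition~\ref{prop:mult} together with $\Z[t]$ being an integral domain. Your observation that ``free sum'' is a slip for ``free join'' is also accurate.
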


As a lattice pyramid is the free join of a point (which is thin) and a lattice polytope, this generalizes Proposition~\ref{prop:pyr}.

\subsection{Are there other examples of thin polytopes?}
\label{subsec:ex}
It is not trivial to give examples of thin polytopes (such as Example~\ref{ex:nonsp} below) that do not fall in above described two classes. In order to formulate a natural question in this respect, let us recall two notions. First, a lattice polytope $P$ is called {\em spanning} if every lattice point in its affine hull is an integer affine combination of the lattice points in $P$. Note that every lattice polytope becomes spanning after a possible coarsening of the ambient lattice (we refer to \cite{spanning} for more background and results on spanning lattice polytopes). Second, let us call a lattice polytope $P$ a {\em join} if there are two non-empty faces $F$ and $G$ of $P$ such that the free join of $F$ and $G$ is affinely-isomorphic to $P$. Let us remark that if $P$ is spanning and a join of $F$ and $G$ where every lattice point in $P$ is contained in $F$ or $G$, then it is automatically a free join.

\begin{question}\label{question}\
\begin{enumerate}
    \item Is every thin polytope trivially thin or a join?
    \item Is every spanning thin polytope trivially thin or a free join?
    \end{enumerate}
\end{question}

Both questions are closely related but not directly. The reason is that the degree of the polytope can drop under coarsenings of the lattice, so a non-spanning thin but not trivially thin polytope could be trivially thin with respect to its spanning lattice. We also note that trivially thin polytopes are often not joins. For example the unit square $[0,1]^2$ has degree $1$ and is hence trivially thin, while triangles are the only polygons which are joins. 

As the following example shows, the spanning hypothesis in the second part of Question~\ref{question} is indeed important. It is one of the apparently rare thin polytopes that are not trivially thin and not a free join.

\begin{ex}
Consider the $4$-simplex $P = \conv(0, e_1, e_2, (1, 2, 4, 0), (2, 1, 0, 4)) \subseteq \R^4$. The sublattice $N$ of $\Z^4$ spanned by all lattice points of $P$ has index $2$ and the quotient $\Z^4/N \cong \Z/2\Z$ is generated by $\overline{e_3} = \overline{e_4}$. A computation in \texttt{SageMath} with backend \texttt{Normaliz} shows that $P$ is thin and $h_P^\ast(t) = t^3 + 11t^2 + 3t + 1$, in particular $\deg(P) = 3$, so $P$ is not trivially thin. A computation in \texttt{Polymake} shows that the lattice width of $P$ is $2$, so that $P$ is not a Cayley polytope, in particular not a free join. It can be checked that with respect to $N$, $P$ is the lattice pyramid over a reflexive $3$-simplex of lattice volume $8$.
\label{ex:nonsp}
\end{ex}

Question~\ref{question} should be understood as a guiding question for finding interesting high-dimensional thin polytopes. Let us discuss this problem in more detail below.

As being hollow is equivalent to $\deg(P) < \dim(P)$, it is evident that every hollow lattice polytope in dimension $\le 2$ is trivially thin. Hence, by Proposition~\ref{prop:dim2} every thin polytope in dimension $\le 2$ is trivially thin. It will be proven in our first main result Theorem~\ref{thm:3d} that in dimension $3$ all non-trivially thin polytopes are lattice pyramids. In particular, Question~\ref{question} has an affirmative answer in dimensions $\le 3$. Note that $\conv(e_1,e_2,-e_1-e_2) \circ_\Z 2 \Delta_2$ is an example of a thin simplex in dimension $5$ that is not trivially thin (it has degree $3$), but is not a lattice pyramid, while being a free join with a (trivially) thin factor.

In higher dimensions our second main result shows that non-trivially thin Gorenstein polytopes are so-called Gorenstein joins (see Definition~\ref{def:Gorenstein-join}) with a trivially thin factor, so that Question~\ref{question} has an affirmative answer also in the Gorenstein case (see Corollary~\ref{cor:spanning}).

Computationally, we have verified that Question~\ref{question} has an affirmative answer for all $4$-dimensional lattice polytopes of lattice volume $\leq 21$, for all $5$-dimensional lattice simplices of lattice volume $\leq 20$ and for all $6$-dimensional lattice simplices of lattice volume $\leq 16$. We provide some of the relevant data at \cite{github}.

\subsection{Interesting thin empty simplices?}

 A lattice simplex is called {\em empty} if its vertices are its only lattice points. 
Among the hollow polytopes this is the class of lattice simplices that has been studied most intensively, see e.g. \cite{Santos4d} and the references therein. However, it turns out that there are no interesting thin empty simplices in dimension at most $4$. Let us give the easy reasoning. For this, we recall that the {\em quotient group} of a $d$-dimensional lattice simplex $P \subset \R^d$ is defined as the quotient of $\Z^{d+1}$ by the subgroup generated by the vertices of $P \times \{1\}$. 

\begin{prop}
Let $P$ be a lattice simplex with cyclic quotient group. Then $P$ is thin if and only if $P$ is a lattice pyramid.
\end{prop}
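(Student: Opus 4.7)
The plan is to exploit the combinatorial description of the local $h^*$-polynomial for lattice simplices from Example~\ref{ex:simplex}. Let $v_0, \ldots, v_d$ denote the vertices of $P$, let $\Lambda := \langle (v_0,1), \ldots, (v_d,1) \rangle \subseteq \Z^{d+1}$, and let $\Pi := \{ \sum_{i=0}^d \lambda_i(v_i,1) : 0 \le \lambda_i < 1 \}$ be the half-open parallelepiped. Lattice points of $\Pi$ are in canonical bijection with the quotient group $G = \Z^{d+1}/\Lambda$, and the interior lattice points correspond to classes whose fundamental-domain representative $\sum \lambda_i(v_i,1)$ has all $\lambda_i > 0$. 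Hence $P$ is thin if and only if $\Pi$ contains no lattice point in its interior.

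One direction (lattice pyramid $\Rightarrow$ thin) is just Proposition~\ref{prop:pyr}, so the work is in the converse. Assuming $P$ thin and $G$ cyclic, I would pick a generator $g$ of $G$ and write its representative as $\sum_{i=0}^d (a_i/n)(v_i,1)$ with $n := |G|$ and $a_i \in \{0,1,\ldots,n-1\}$. Since $g$ is nontrivial in $G$, thinness forces $g$ to correspond to a non-interior lattice point of $\Pi$, so $a_i = 0$ for some index $i$; relabel so that $a_0 = 0$. Because every element of $G$ has the form $kg$ with representative $\sum_i \{ka_i/n\}(v_i,1)$, the coefficient of $(v_0,1)$ vanishes for every lattice point of $\Pi$. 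Thus every lattice point of $\Pi$ lies in the facet parallelepiped $\Pi_F$ spanned by $(v_1,1), \ldots, (v_d,1)$, where $F := \conv(v_1,\ldots,v_d)$ is the facet opposite $v_0$.

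The main obstacle is the final step: deducing from this that $P$ is genuinely a lattice pyramid with apex $v_0$, i.e., that $v_0$ sits at lattice height $1$ over $\mathrm{aff}(F)$. For this, let $\Lambda_F := \langle (v_i,1) : 1 \le i \le d \rangle$ and let $\bar\Lambda_F := \mathrm{span}_\R(\Lambda_F) \cap \Z^{d+1}$ denote its saturation, so that lattice points of $\Pi_F$ are in bijection with $\bar\Lambda_F/\Lambda_F$. A brief index computation using the tower $\Lambda \subseteq \bar\Lambda_F + \Z \cdot (v_0,1) \subseteq \Z^{d+1}$, together with the observation that $\bar\Lambda_F \cap \Lambda = \Lambda_F$, yields
\[
    |G| = [\Z^{d+1} : \bar\Lambda_F + \Z \cdot (v_0,1)] \cdot [\bar\Lambda_F : \Lambda_F] = h \cdot |\bar\Lambda_F/\Lambda_F|,
\]
where $h$ is the lattice height of $v_0$ over $\mathrm{aff}(F)$. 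Since the $|G|$ lattice points of $\Pi$ all lie among the $|\bar\Lambda_F/\Lambda_F|$ lattice points of $\Pi_F$, this forces $h = 1$, which is precisely the condition that $P$ is a lattice pyramid over $F$ with apex $v_0$.
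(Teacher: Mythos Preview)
Your proof is correct and follows essentially the same approach as the paper: pick a generator of the cyclic quotient group, use thinness to force one barycentric coordinate of its $\Pi$-representative to vanish, and propagate this vanishing to all multiples. The only difference is in the final step: the paper cites \cite[Lemma~12]{NillSimplices} to conclude that $P$ is a lattice pyramid once all box points miss the vertex $(v_0,1)$, whereas you supply a self-contained index computation showing the lattice height of $v_0$ over $\mathrm{aff}(F)$ equals $1$. Your version is thus slightly more explicit but not a different route.
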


\begin{proof}
Let $P \subset \R^d$ be $d$-dimensional. We denote by $\Pi$ the half-open parallelepiped from Example~\ref{ex:simplex}. Clearly, every element in the quotient group of $P$ has a unique representative in $\Pi \cap \Z^{d+1}$. Let $g \in \Pi \cap \Z^{d+1}$ be the representative of a generator of the quotient group of $P$. We assume that $P$ is thin. Hence, there is a proper, non-empty subset $V'$ of the vertex set of $S \times \{1\}$ such that $g$ is a linear combination of vertices of $V'$. In particular, this also holds for the representatives of all the elements in the quotient group of $P$. Now, it follows from \cite[Lemma~12]{NillSimplices} that $P$ is a lattice pyramid.
\end{proof}

It is well-known that all empty lattice simplices in dimension at most $4$ have cyclic quotient group \cite{barile}. As also in higher dimensions most  empty simplices constructed (but not all of them) have this property (see e.g. \cite{WideSimplices}), it seems to be a challenge to find examples of empty simplices that are thin but not simply lattice pyramids.

\subsection{Are thin polytopes `flat'?}

We observed above that all thin polytopes in dimension at most two have lattice width $1$ except for $2 \Delta_2$. We leave it as an exercise to the reader to show that $2 \Delta_d$ for even $d$ is the only thin simplex among all lattice simplices of the form $\conv(0,k_1 e_1, \ldots, k_d e_d) \subset \R^d$ with $k_1, \ldots, k_d \in \Z_{\ge 1}$ that are not lattice pyramids (i.e., $k_i > 1$ for all $i$). It will follow from our main results that thin polytopes in dimension three (Corollary~\ref{cor:flat-3d}) as well as thin Gorenstein polytopes in arbitrary dimension (Corollary~\ref{cor:flat-gorst}) have lattice width $1$. In dimension four Example~\ref{ex:nonsp} has lattice width $2$. As thin polytopes (of dimension $> 0$) are hollow, in fixed dimension their lattice width is bounded. Now, our lack of 'non-flat' examples motivates the following question.

\begin{question}\label{question2}
Can one find (spanning) thin polytopes of arbitrarily large lattice width?
\end{question}

We expect that such examples with increasing lattice width should exist with increasing dimension. Note that if one assumes that Question~\ref{question}(2) has an affirmative answer, then for Question~\ref{question2} it would be important to find the maximum width of trivially thin spanning polytopes $P$. However, it is a folklore open question, often called 'the' Cayley conjecture (see \cite{Alicia,HNP09,Higashitani}), that any lattice polytope with $\dim(P) > 2 \deg(P)$ has lattice width $1$. Thus, assuming also that the Cayley conjecture holds essentially reduces the previous question to the study of spanning lattice polytopes with $\dim(P) = 2 \deg(P)$. 

\section{Classification of thin polytopes in dimension $3$}

\label{sec:3d}

As observed above, three-dimensional lattice polytopes $P$ that are lattice pyramids over polygons or have degree at most one are automatically thin. Our first main result, Theorem~\ref{thm:3d}, shows that in dimension three indeed all the thin polytopes are of this type. 

Lattice polytopes of degree at most one are completely known in any dimension. For this, let us recall the following definition.

\begin{defn}
A {\em Lawrence prism} is a $d$-dimensional lattice polytope in $\R^d$ isomorphic to $\conv(0,e_1,\ldots, e_{d-1}, k_0 e_d, e_1+k_1 e_d, \ldots, e_{d-1}+k_{d-1} e_d)$ with $k_0, k_1, \ldots, k_{d-1} \in \Z_{\ge 1}$.
\end{defn}

The following result was proven in \cite{batyrev2007multiples}.

\begin{thm}
Any lattice polytope of degree $1$ is either a lattice pyramid, a Lawrence prism or isomorphic to $2 \Delta_2$.
\label{thm:BN}
\end{thm}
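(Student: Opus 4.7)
The plan is to exploit the two rigid consequences of $\deg(P)=1$: the $h^*$-polynomial has the very restrictive form $h^*_P(t)=1+h^*_1 t$, and by Ehrhart-MacDonald reciprocity the dilate $(d-1)P$ is hollow, where $d=\dim(P)$. I would first reduce to the case of lattice width one, with $2\Delta_2$ surviving as the single exceptional case, and then classify lattice-width-one degree-one polytopes directly.

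Stage 1: reduction to lattice width one. The case $d=2$ is handled by Scott's classification of hollow lattice polygons, which yields exactly the three claimed classes with $2\Delta_2$ emerging as the unique polygon of lattice width two. For $d\ge 3$ I would show that $P$ must have lattice width one. A natural route is to exploit an interior lattice point $m\in\interior(dP)\cap\Z^d$ (nonempty since $\codeg(P)=d$) by using it as the apex of a pyramidal subdivision of $P$ over its facets. The volume identity $d!\vol(P)=h^*_1+1$ combined with the vanishing $h^*_j=0$ for $j\ge 2$ then tightly restricts the lattice volumes of the facets of $P$, forcing most of them to be unimodular simplices. The resulting simplicial rigidity of $\partial P$ yields lattice width one.

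Stage 2: classification under lattice width one. After a unimodular change of coordinates, $P=\conv(Q_0\times\{0\}\cup Q_1\times\{1\})$ for nonempty lattice polytopes $Q_0,Q_1$ in $\R^{d-1}$. Slicing $nP$ horizontally, the slice at integer height $k$ is the Minkowski sum $(n-k)Q_0+kQ_1$, so
\[ \mathrm{Ehr}_P(t)=\sum_{m,k\ge 0}|(mQ_0+kQ_1)\cap\Z^{d-1}|\,t^{m+k}. \]
The degree-one hypothesis forces the $h^*$-coefficients of $t^j$ to vanish for $j\ge 2$, imposing extremely rigid simultaneous lattice-point constraints on $Q_0$ and $Q_1$. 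If one of $Q_0,Q_1$ reduces to a single lattice point, $P$ is a lattice pyramid; otherwise, a coefficient-by-coefficient analysis forces $(Q_0,Q_1)$ into a precise combinatorial form whose convex hull realizes the Lawrence prism, with integer height parameters $k_0,\ldots,k_{d-1}$ read off from the edges of $P$ in the lattice-width-one direction.

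The hard part is Stage 1 for $d\ge 3$. This is the degree-one instance of the Cayley decomposition conjectures of Batyrev, Nill, Haase, and Payne, which remain open in general. For degree one the argument is tractable because the constraint $h^*_P(t)=1+h^*_1 t$ is so restrictive, but extracting lattice width one requires a delicate facet-volume count; the exceptional $2\Delta_2$ must be isolated by hand, being precisely the two-dimensional obstruction where the facet-based argument just fails to force lattice width one.
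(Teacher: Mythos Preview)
The paper does not give its own proof of this statement; it is quoted as a known result from \cite{batyrev2007multiples} (Batyrev--Nill). So strictly speaking there is no in-paper argument to compare your proposal against, only the original source.

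Your two-stage architecture---first reduce to lattice width one with $2\Delta_2$ as the sole exception, then classify degree-one Cayley polytopes---does match the overall shape of the Batyrev--Nill proof. However, Stage~1 as you describe it has a concrete gap. The point $m$ you pick lies in $\interior(dP)$, not in $\interior(P)$; since $P$ is hollow (indeed $(d-1)P$ is hollow), there is no lattice point inside $P$ to serve as the apex of a pyramidal lattice subdivision, and $m/d$ is generally not a lattice point. So the mechanism you propose for ``tightly restricting the lattice volumes of the facets'' never gets started, and the sentence ``the resulting simplicial rigidity of $\partial P$ yields lattice width one'' is a restatement of the conclusion rather than an argument. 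The actual Batyrev--Nill route to width one for $d\ge 3$ is different: using $h^*_2=0$ together with Stanley's monotonicity on carefully chosen lattice subpolytopes, they show that every vertex of $P$ is simple with unimodular edge-cone, and from this local smoothness they manufacture the lattice projection onto $\Delta_1$.

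Stage~2 is closer in spirit, but the ``coefficient-by-coefficient analysis'' you allude to is not carried out, and in the original the classification of the Cayley case again leans on the vertex-cone unimodularity established in Stage~1 rather than on a bare generating-function comparison of Minkowski-sum lattice points.
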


Here is the main result of this section.

\begin{thm}
	Let $P$ be a $3$-dimensional lattice polytope. Then $P$ is thin if and only if $P$ is a lattice pyramid over a lattice polygon (i.e., over a lattice polytope of dimension~$2$) or $\deg(P) \le 1$. Equivalently, $P$ is thin if and only if 
	\begin{itemize}
				\item $P$ is a lattice pyramid over a lattice polygon, or
				\item $P$ is a Lawrence prism.
	\end{itemize}
	\label{thm:3d}
\end{thm}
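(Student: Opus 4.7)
The ``if'' direction is immediate: Proposition~\ref{prop:pyr} handles lattice pyramids over lattice polygons, while every $3$-dimensional lattice polytope of degree at most one satisfies $\dim(P) = 3 \geq 2\deg(P)$ and is thus trivially thin. The equivalence of the two displayed formulations then reduces to Theorem~\ref{thm:BN}, since the exceptional $2\Delta_2$ occurring there is $2$-dimensional and cannot arise here.

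For the ``only if'' direction, let $P$ be thin and $3$-dimensional. Proposition~\ref{prop:dim2} gives that $P$ is hollow, hence $\codeg(P) \geq 2$ and $\deg(P) \leq 2$ by Ehrhart--Macdonald reciprocity. The subcases $\deg(P) \in \{0,1\}$ are covered by Theorem~\ref{thm:BN}, so the genuine content of the theorem is: a thin $3$-dimensional lattice polytope of degree $2$ is a lattice pyramid over a polygon.

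My plan is to first settle the tetrahedral case (Corollary~\ref{cor:tetra}) by a direct box argument. By Example~\ref{ex:simplex}, a $3$-simplex $S$ is thin precisely when the half-open parallelepiped $\Pi$ spanned by the lifted vertices $(v_i,1)$ contains no interior lattice point. If $S$ is a lattice pyramid with apex at lattice distance $1$ from the opposite triangle, a straightforward parametrisation of $\Pi$ confines one of its ``height'' coordinates to the open interval $(0,1)$, forcing $\Pi^\circ \cap \Z^4 = \emptyset$. Conversely, if no vertex of $S$ lies at lattice distance $1$ from the opposite face, a study of the box representatives of the quotient group $\Z^4/\langle (v_0,1),\ldots,(v_3,1)\rangle$ in the spirit of \cite[Lemma~12]{NillSimplices} produces an interior lattice point of $\Pi$.

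To lift this to arbitrary $P$, I would combine two ingredients. First, Proposition~\ref{prop:decomp} applied to any lattice triangulation $\Delta$ of $P$, together with the computation $l_{\Delta,\sigma}(t) = 1$ for every top-dimensional simplex $\sigma$ (the sum in Definition~\ref{def:local-h} collapses to the single term $F = P$, with $h_{\link(\Delta,\sigma)}=1=g_\emptyset$), forces every $3$-simplex of $\Delta$ to be thin and hence a lattice pyramid. Second, Proposition~\ref{prop:schepers} specialised to $\dim P = 3$ via the direct computations $h_{[E,P]}(t) = h_{[F,P]}(t) = 1$, $l^*_v = 0$, $l^*_E(t) = \intr_\Z(E)\,t$, and $l^*_F(t) = \intr_\Z(F)(t+t^2)$ collapses, under thinness, to the rigid identity $h^*_2(P) = \sum_{F \text{ 2-face}} \intr_\Z(F)$. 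The main obstacle is combining these into the pyramid conclusion; I would do so by invoking the classification of $3$-dimensional hollow lattice polytopes of lattice width $\geq 2$ (a short explicit list due to Averkov, Wagner and Weismantel) to rule out this case by direct inspection, and for the remaining width-$1$ polytopes $P = \Cay(Q_0,Q_1)$ the identity above forces one of $Q_0,Q_1$ to be a single lattice point, exhibiting $P$ as a lattice pyramid.
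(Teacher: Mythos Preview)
Your setup is fine and, in fact, your identity $h^*_2(P)=\sum_{F\text{ facet}}\intr_\Z(F)$ for a thin $3$-polytope is exactly the paper's Corollary~\ref{cor:thinCondition} in disguise (since $h^*_2=|\interior_\Z(2P)|-4|\interior_\Z(P)|$ and $P$ is hollow). The gaps are in what comes after.

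First, the tetrahedral argument is incomplete: the appeal to \cite[Lemma~12]{NillSimplices} only works when the quotient group is cyclic, and $3$-dimensional lattice simplices need not have cyclic quotient (e.g.\ $2\Delta_3$ has quotient $(\Z/2)^3$). More importantly, even granting the tetrahedral case, knowing that every maximal cell of a lattice triangulation of $P$ is a lattice pyramid does not by itself force $P$ to be one; you never explain how to use this.

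Second, and this is the main issue, your treatment of the width~$\ge 2$ case misreads the Averkov--Wagner--Weismantel classification. Their result (Theorem~\ref{thm:3Dhollow} in the paper) does \emph{not} say that hollow $3$-polytopes of width~$\ge 2$ form a short explicit list: besides the twelve maximal exceptions there is an \emph{infinite} family of hollow $3$-polytopes admitting a lattice projection onto $2\Delta_2$, and these can have width~$2$. This case cannot be handled by direct inspection; in the paper it occupies the bulk of the proof and requires a careful injective-counting argument comparing $|\interior_\Z(2P)|$ with $\sum_F|\interior_\Z(F)|$. Likewise, in the width-$1$ case your assertion that the identity ``forces one of $Q_0,Q_1$ to be a single lattice point'' is exactly what needs to be proved; the paper does this via a mixed-volume computation (Lemma~\ref{lem:MVinterior}) showing $\MV(Q_0,Q_1)=1$ and hence $\deg(P)\le 1$, rather than concluding a pyramid directly.
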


\begin{cor}
Every three-dimensional thin polytope has lattice width $1$.\label{cor:flat-3d}
\end{cor}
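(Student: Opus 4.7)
The plan is to invoke Theorem~\ref{thm:3d} and then verify the lattice-width bound separately on each of the two types of three-dimensional thin polytopes it produces: lattice pyramids over lattice polygons and three-dimensional Lawrence prisms. In each case I would exhibit a primitive lattice functional on $\Z^3$ that attains only two consecutive integer values on the polytope. Since every three-dimensional polytope has lattice width at least~$1$, each exhibit forces equality.

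For a lattice pyramid $\conv(P \times \{0\}, (\mathbf{0},1)) \subset \R^2 \times \R$ over a lattice polygon $P$, the functional given by projection onto the last coordinate vanishes identically on the base $P \times \{0\}$ and equals $1$ at the apex, so the lattice width is exactly~$1$ by the argument above. This step is essentially the definition of a lattice pyramid and requires no further calculation.

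For a Lawrence prism with vertex set $\{0,\,e_1,\,e_2,\,k_0 e_3,\,e_1+k_1 e_3,\,e_2+k_2 e_3\}$ and $k_0,k_1,k_2 \in \Z_{\ge 1}$, I would take the primitive functional $\ell(x_1,x_2,x_3) = x_1 + x_2$, which takes value~$0$ on $\{0,\,k_0 e_3\}$ and value~$1$ on the remaining four vertices. Note that projection onto the last coordinate does \emph{not} work here, since its range on the vertices is $\{0, k_0, k_1, k_2\}$ and may have diameter larger than~$1$; so the one mildly subtle step is to choose the ``horizontal'' functional that collapses the prism direction rather than the last coordinate. Once the correct functional is identified, verification is immediate from the listed vertices.

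Combining the two cases with Theorem~\ref{thm:3d} completes the proof. There is no genuine obstacle: all the heavy lifting has already gone into the classification theorem, and the corollary reduces to inspecting two explicit normal forms. The only point worth mentioning is the choice of functional in the Lawrence-prism case, but it is immediate from the shape of the defining vertices.
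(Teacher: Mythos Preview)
Your proposal is correct and matches the paper's approach: the paper states this corollary immediately after Theorem~\ref{thm:3d} without an explicit proof, treating it as an obvious consequence of the classification, and your argument supplies precisely the straightforward verification one would expect. (As a minor simplification, in the Lawrence-prism case the single-coordinate functional $x_1$ already separates the vertices into the values $0$ and $1$, so $x_1+x_2$ is not needed---but your choice works equally well.)
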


The proof of Theorem~\ref{thm:3d} relies on two instances that seem to be exceptional to small dimensions. First, in dimension three all the coefficients of the local $h^*$-polynomial can be explicitly determined.

\begin{prop}
	Let $P \subseteq \R^3$ be a $3$-dimensional lattice polytope. Then
	\begin{equation*}
		\ell^\ast_P(t) = |\interior_{\Z}(P)| (t + t^3) + \left(|\interior_{\Z}(2P)| - 4 |\interior_{\Z}(P)| - \sum_{F \leq P \text{ facet}} |\interior_{\Z}(F)|\right) t^2.
	\end{equation*}
	\label{prop:lstar-3d}
\end{prop}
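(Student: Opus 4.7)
The plan is to combine the palindromicity of $l^*_P(t)$ (Theorem~\ref{thm:lstar}(2)) with the polynomial identity of Proposition~\ref{prop:schepers} to determine each coefficient of $l^*_P(t)$ individually. From Theorem~\ref{thm:lstar}(2) and (4) we already know that
\[l^*_P(t) = l^*_1\, t + l^*_2\, t^2 + l^*_3\, t^3 \quad \text{with} \quad l^*_1 = l^*_3 = |\interior_\Z(P)|,\]
so only the middle coefficient $l^*_2$ remains to be found.

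To isolate $l^*_2$, I would extract the coefficient of $t^2$ from
\[h^*_P(t) = \sum_{\emptyset \le F \le P} l^*_F(t)\, h_{[F,P]}(t).\]
For the $l^*$-polynomials of proper faces, the explicit formulas stated immediately after Theorem~\ref{thm:lstar} give $l^*_\emptyset = 1$, $l^*_v = 0$, $l^*_E(t) = |\interior_\Z(E)|\,t$ for edges, and $l^*_F(t) = |\interior_\Z(F)|\,(t+t^2)$ for facets. The relevant intervals $[F,P]$ are combinatorially very simple in a three-polytope: when $F$ is a facet, $[F,P]$ is a two-element chain, and when $F$ is an edge, $[F,P]$ is a rank-$2$ Eulerian diamond (each edge of a three-polytope lies in exactly two facets). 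A short direct computation from the recursive Definition~\ref{def:fgh} shows that both of these posets have $h$-polynomial equal to $1$. Moreover, $g_P(t) = h_{[\emptyset,P]}(t)$ has degree at most $\lfloor 3/2\rfloor = 1$ and so contributes nothing to the $t^2$-coefficient, while the vertex terms vanish automatically because $l^*_v = 0$. Reading off the coefficient of $t^2$ therefore yields
\[h^*_{P,2} = \sum_{F \text{ facet}} |\interior_\Z(F)| + l^*_{P,2}.\]

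To finish, I would evaluate $h^*_{P,2}$ via Ehrhart--MacDonald reciprocity, which in dimension three produces $|\interior_\Z(2P)| = h^*_2 + 4 h^*_3$, and hence $h^*_{P,2} = |\interior_\Z(2P)| - 4\,|\interior_\Z(P)|$. Substituting this into the previous display supplies the $t^2$-coefficient of the claimed formula, and the palindromicity reduction above accounts for $l^*_1$ and $l^*_3$. The only step requiring any genuine bookkeeping is the verification that $h_{[E,P]}(t) = h_{[F,P]}(t) = 1$ from Definition~\ref{def:fgh}; everything else is a direct coefficient extraction together with a short manipulation of the Ehrhart series.
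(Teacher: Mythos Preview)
Your argument is correct. The computations of $h_{[F,P]}(t)=h_{[E,P]}(t)=1$ from Definition~\ref{def:fgh} go through exactly as you describe, the $t^2$-extraction from Proposition~\ref{prop:schepers} yields $h^*_{P,2}=\sum_{F\text{ facet}}|\interior_\Z(F)|+l^*_2$, and the reciprocity identity $h^*_2=|\interior_\Z(2P)|-4|\interior_\Z(P)|$ is the standard one.

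The paper reaches the same final identity $h^*_2=l^*_2+\sum_{F\text{ facet}}|\interior_\Z(F)|$ but by a different route: it invokes the Katz--Stapledon $h^*$-diamond decomposition $h^*_2=h^*_{1,0}+h^*_{1,1}$ and then cites \cite[Example~8.9]{Katz2016Local} for the evaluation $h^*_{1,0}=\sum_{F\text{ facet}}|\interior_\Z(F)|$ and $h^*_{1,1}=l^*_2$. Your approach via Proposition~\ref{prop:schepers} is more elementary and entirely self-contained within the paper, replacing an external reference by a two-line poset computation; the paper's route has the advantage of situating the formula within the general $h^*$-diamond framework, which is what drives the later results (e.g.\ Theorem~\ref{thm:lower}). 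In substance the two arguments coincide, since the relevant entry of the $h^*$-diamond is defined precisely through the decomposition of Proposition~\ref{prop:schepers}.
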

	
	\begin{proof}
	Recall from Theorem~\ref{thm:local_h} that $\ell^\ast_1 = \ell^\ast_{3} = h^\ast_{3} = |\interior_{\Z}(P)|$. Hence, we need only determine $\ell^\ast_2$. From Stanley reciprocity, we deduce $h^\ast_2=|\interior_{\Z}(2P)| - 4 |\interior_{\Z}(P)|$. Now, in the notation of the $h^\ast$-diamond introduced in \cite{Katz2016Local} (see Definition~\ref{def:mixed}) we have $h^\ast_2 = h^\ast_{1,0}+h^\ast_{1,1}$, where $h^\ast_{1,1}=\ell^\ast_2$ and $h^\ast_{1,0} = \sum_{F \leq P \text{ facet}} |\interior_{\Z}(F)|$ by \cite[Example~8.9]{Katz2016Local}. This implies the statement.
	\end{proof}
	
	Let us note that we get from the lower bound theorem of Katz-Stapledon, Theorem~\ref{thm:lower}, $\ell^*_1 \le \ell^*_2$. This leads to the following non-obvious corollary. It would be very interesting to find a purely combinatorial proof. 
	
	\begin{cor}
	Let $P \subseteq \R^3$ be a $3$-dimensional lattice polytope. Then 
	\[|\interior_{\Z}(2P)| \ge 5 \,|\interior_{\Z}(P)| + \sum_{F \leq P \text{ facet}} |\interior_{\Z}(F)|.\]
	\label{cor:comb3d}
	\end{cor}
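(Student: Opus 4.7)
The plan is essentially to chain together the two results that are explicitly invoked just before the statement of the corollary, and then to rearrange terms. The corollary is a purely formal consequence, so the task is really to identify which inequality to apply and to where.

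First I would read off from Proposition~\ref{prop:lstar-3d} the explicit coefficients of $l^\ast_P(t)=\sum_{i=1}^{3} l^\ast_i t^i$, namely
\[
l^\ast_1 = l^\ast_3 = |\interior_{\Z}(P)|, \qquad
l^\ast_2 = |\interior_{\Z}(2P)| - 4\,|\interior_{\Z}(P)| - \sum_{F \leq P \text{ facet}} |\interior_{\Z}(F)|.
\]
Next I would apply the Katz--Stapledon lower bound theorem (Theorem~\ref{thm:lower}) in dimension $d=3$, which yields $l^\ast_1 \le l^\ast_i$ for $i=2,3$. The inequality $l^\ast_1 \le l^\ast_3$ is trivially an equality by palindromicity (Theorem~\ref{thm:lstar}(2)), so the only non-trivial content is $l^\ast_1 \le l^\ast_2$. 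Substituting the formulas above gives
\[
|\interior_{\Z}(P)| \;\le\; |\interior_{\Z}(2P)| - 4\,|\interior_{\Z}(P)| - \sum_{F \leq P \text{ facet}} |\interior_{\Z}(F)|,
\]
and moving the negative terms to the left-hand side produces exactly the claimed inequality.

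There is no real obstacle here; the only ``hard'' ingredient is already imported, namely Theorem~\ref{thm:lower}, whose proof rests on the Katz--Stapledon refinement of $l^\ast$ into the $l^\ast$-diamond and the decomposition theorem via relative hard Lefschetz. As the authors remark, what is noteworthy is the absence of a direct combinatorial argument: the only proof I see goes through local $h^\ast$-polynomials and the non-negativity of the off-diagonal entries of the $h^\ast$-diamond.
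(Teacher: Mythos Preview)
Your argument is correct and matches the paper's own reasoning exactly: apply the Katz--Stapledon lower bound $l^\ast_1 \le l^\ast_2$ from Theorem~\ref{thm:lower} and substitute the explicit coefficients from Proposition~\ref{prop:lstar-3d}. There is nothing to add.
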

	
	For our purposes, let us note the following numerical characterization of thinness in dimension three.
	
\begin{cor} \label{cor:thinCondition}
	Let $P \subseteq \R^3$ be a $3$-dimensional lattice polytope. Then  $P$ is thin if and only if $P$ is hollow and
	\begin{equation*}
		|\interior_{\Z}(2P)| = \sum_{F \leq P \text{ facet}} |\interior_{\Z}(F)|.
	\end{equation*}
\end{cor}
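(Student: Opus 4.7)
The plan is to read off the condition $l^*_P(t)=0$ coefficient by coefficient from the explicit formula given in Proposition~\ref{prop:lstar-3d}. Since $\dim(P)=3$, the local $h^*$-polynomial has degree at most $4$, vanishes in degrees $0$ and $4$ by Theorem~\ref{thm:lstar}(4), and is palindromic of degree $4$ with $l^*_1=l^*_3$. Hence only the coefficients $l^*_1$ and $l^*_2$ need to be controlled, and thinness is equivalent to the simultaneous vanishing of these two coefficients.

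First I would observe that $l^*_1=l^*_3=|\interior_{\Z}(P)|$ vanishes precisely when $P$ is hollow; this is the first half of the stated criterion and uses only Theorem~\ref{thm:lstar}(4) (or equivalently the explicit shape of $l^*_P(t)$ from Proposition~\ref{prop:lstar-3d}). Next, under the assumption that $P$ is hollow, I would plug $|\interior_{\Z}(P)|=0$ into the formula of Proposition~\ref{prop:lstar-3d} to obtain
\[
l^*_P(t) \;=\; \Bigl(|\interior_{\Z}(2P)| - \sum_{F \leq P \text{ facet}} |\interior_{\Z}(F)|\Bigr)\, t^2,
\]
so vanishing of $l^*_P(t)$ becomes exactly the equality
\[
|\interior_{\Z}(2P)| \;=\; \sum_{F \leq P \text{ facet}} |\interior_{\Z}(F)|.
\]
Conversely, if $P$ is hollow and this equality holds, then all coefficients of $l^*_P(t)$ vanish, so $P$ is thin.

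There is really no obstacle here: the statement is a direct specialisation of Proposition~\ref{prop:lstar-3d}, and the only subtlety worth flagging is that one must invoke palindromicity (or the explicit formula) to conclude that the two coefficients $l^*_1$ and $l^*_2$ are the only ones that can obstruct thinness in dimension three. I would write the proof as two short sentences: one deriving hollowness from $l^*_1=0$, and one reducing the remaining coefficient $l^*_2$ to the stated equality.
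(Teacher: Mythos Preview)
Your proposal is correct and matches the paper's approach: the corollary is stated immediately after Proposition~\ref{prop:lstar-3d} without a separate proof, since it is a direct specialization of that formula exactly as you describe. The only content is that $l^*_1=0$ forces hollowness and then $l^*_2=0$ reduces to the displayed equality, which is precisely your argument.
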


The second result that is not yet available in higher dimensions is a complete classification of hollow three-dimensional lattice polytopes. 

\begin{thm}[\cite{averkov2011maximal}]\label{thm:3Dhollow}
	Let $P \subseteq \R^3$ be a $3$-dimensional hollow lattice polytope. Then one of the following holds:
	\begin{enumerate}
		\item \label{it:exceptions} $P$ is contained in one of the $12$ maximal hollow lattice polytopes classified in \cite{averkov2011maximal}.
		\item \label{it:width1} There is a lattice projection $\R^3 \rightarrow \R^1$ mapping $P$ onto $\Delta_1$.
		\item \label{it:2delta2} There is a lattice projection $\R^3 \rightarrow \R^2$ mapping $P$ onto $2 \Delta_2$.
	\end{enumerate}
\end{thm}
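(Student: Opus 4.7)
The plan is to reduce the classification to a finite enumeration by means of a lattice-width dichotomy, following the overall strategy of \cite{averkov2011maximal}. First, I would observe that it suffices to classify \emph{maximal} hollow lattice $3$-polytopes: any hollow $P$ is contained in some inclusion-maximal hollow $P'$ (enlarging a hollow polytope strictly must eventually create an interior lattice point), and each of the three conclusions of the theorem is inherited by sub-polytopes. The three conditions partition the maximal hollow polytopes into a "width $1$" class, a "projects onto $2\Delta_2$" class, and a genuinely exceptional class, and the goal is to show that the last class is finite and has precisely $12$ members.

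The next step is a dichotomy by lattice width. By the flatness theorem in the sharp form for dimension $3$ (due to work going back to Hurkens and later refined), any hollow lattice $3$-polytope has lattice width at most $2$. If $w(P') = 1$, then a width-attaining projection $\pi\colon \R^3 \to \R$ realizes $\pi(P') = \Delta_1$, placing $P'$ in case \ref{it:width1}. Thus one may assume $w(P') = 2$. In this case I would fix a width-$2$ functional and study the three non-empty fibers $\pi^{-1}(0), \pi^{-1}(1), \pi^{-1}(2)$, each of which is a hollow lattice polygon in a $2$-dimensional affine lattice. The middle slice $\pi^{-1}(1)$ plays a distinguished role because it must contain every lattice point of $P'$ not on the two boundary slices without any of them being interior to $P'$.

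The next step treats projections to $\R^2$. By the classification of hollow lattice polygons, any projection $P' \to \R^2$ whose image is hollow lands on either a polygon of width $1$ or on $2\Delta_2$. If some such projection yields $2\Delta_2$, we are in case \ref{it:2delta2}. Otherwise every hollow $2$-dimensional projection is of width $1$, and combined with $w(P')=2$ this severely restricts both the shape of the fibers and the way they can stack. Using the flatness inequality applied to the fibers, together with a bound on the lattice volume of hollow lattice polygons of width $1$, one can bound $\vol_\Z(P')$ a priori, which in turn bounds the number of vertices and places $P'$ inside a universal bounding box up to unimodular equivalence.

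The main obstacle is the final enumeration: producing the explicit list of $12$ maximal hollow polytopes within this bounded search space and verifying for each that it genuinely admits no lattice projection onto $\Delta_1$ or $2\Delta_2$ (so it is not already covered by cases \ref{it:width1} or \ref{it:2delta2}), and that each is inclusion-maximal among hollow polytopes. This is a delicate computer-assisted case analysis, carried out in \cite{averkov2011maximal}; the combinatorial heart is to show that the stacking constraints of the three slices, together with maximality, leave only finitely many—indeed exactly $12$—configurations up to unimodular equivalence.
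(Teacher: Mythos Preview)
The paper does not give a proof of this statement at all: Theorem~\ref{thm:3Dhollow} is quoted verbatim from \cite{averkov2011maximal} and used as a black box in the proof of Theorem~\ref{thm:3d}. So there is no ``paper's own proof'' to compare your sketch against, and a full argument would in any case require reproducing the computer-assisted enumeration of \cite{averkov2011maximal}.

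That said, your sketch contains two genuine errors that would prevent it from being turned into a proof. First, the claim that every hollow lattice $3$-polytope has lattice width at most $2$ is false. The simplex $3\Delta_3=\conv(0,3e_1,3e_2,3e_3)$ is hollow (any interior lattice point would need all coordinates $\ge 1$ and sum $<3$) and has lattice width exactly $3$: for any nonzero $\ell\in(\Z^3)^*$ the values on the vertices are $0,3\ell_1,3\ell_2,3\ell_3$, so the spread is at least $3$. In fact several of the $12$ exceptional polytopes in \cite{averkov2011maximal} have width $3$, so the width-$\le 2$ dichotomy cannot be the organizing principle.

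Second, your reduction to inclusion-maximal hollow lattice polytopes is not well-posed: not every hollow lattice $3$-polytope is contained in a maximal one. For instance $[0,1]^2\times[0,N]$ is hollow for every $N\ge 1$, so there is no maximal hollow lattice polytope in this chain; the maximal hollow convex body here is the infinite prism $[0,1]^2\times\R$. The actual dichotomy used in \cite{averkov2011maximal} is of a different nature: a hollow lattice $d$-polytope either admits a lattice projection onto a hollow lattice $(d-1)$-polytope, or has bounded lattice volume. In dimension $3$ the first alternative, combined with the classification of hollow lattice polygons (width $1$ or $2\Delta_2$), yields cases \ref{it:width1} and \ref{it:2delta2}; the second alternative reduces to a finite enumeration producing the $12$ polytopes of case \ref{it:exceptions}.
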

	
Before giving the proof of Theorem~\ref{thm:3d} let us also recall the following well-known formula for the mixed volume (e.g. \cite[Corollary~3.2]{nill2020mixed}):

\begin{lem}\label{lem:MVinterior}
	Let $P_1, P_2 \subseteq \R^2$ be lattice polytopes. Then
	\begin{align*}
		\MV(P_1, P_2) = 1 &+ (-1)^{\dim(P_1 + P_2)}|\interior_{\Z}(P_1 + P_2)| \\
		&+(-1)^{\dim(P_1) - 1}|\interior_{\Z}(P_1)| + (-1)^{\dim(P_2) - 1}|\interior_{\Z}(P_2)|.
	\end{align*}
\end{lem}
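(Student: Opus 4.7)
My plan is to reduce the formula to a direct computation via Pick's theorem. The starting point is the two-dimensional Minkowski expansion specialized to $\lambda_1 = \lambda_2 = 1$:
\begin{equation*}
    \vol_\Z(P_1 + P_2) \;=\; \vol_\Z(P_1) \,+\, 2\,\MV(P_1,P_2) \,+\, \vol_\Z(P_2),
\end{equation*}
noting that $\vol_\Z(P_i) = 0$ whenever $P_i$ is lower-dimensional. I would then replace each normalized volume on the right by an expression in interior and boundary lattice counts, and finally eliminate the boundary contributions by exploiting additivity of the lattice perimeter under planar Minkowski sums.

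In the primary case where $P_1$, $P_2$, and $P_1 + P_2$ are all full-dimensional in $\R^2$, Pick's theorem gives $\vol_\Z(R) = 2|\interior_\Z(R)| + b(R) - 2$ for each $R$, where $b(R)$ counts boundary lattice points. After substitution, everything reduces to showing $b(P_1+P_2) = b(P_1) + b(P_2)$. This is where I would invoke the edge-length representation $b(R) = \sum_e \ell(e)$ for a planar lattice polygon, combined with the observation that each edge of $P_1 + P_2$ with outer normal $n$ is the Minkowski sum of the corresponding faces of $P_1$ and $P_2$, so its lattice length is $\ell_{P_1}(n) + \ell_{P_2}(n)$ (taking $\ell = 0$ on a vertex-face). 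Summing over outer normals gives the additivity, and dividing by two while tracking $(-1)^{\dim(P_i)-1} = -1$ for $\dim(P_i) = 2$ yields the claimed formula.

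The degenerate cases I would verify one by one. If $\dim(P_1+P_2) \le 1$, then $\MV(P_1,P_2) = 0$, and substituting the elementary interior counts for segments ($|\interior_\Z| = \ell - 1$) and points ($|\interior_\Z| = 1$), together with the sign pattern $(-1)^{\dim - 1}$, forces the right-hand side to collapse to zero. If $\dim(P_1+P_2) = 2$ with $P_2$ a segment of lattice length $\ell$ and $P_1$ full-dimensional, the Minkowski expansion still applies with $\vol_\Z(P_2) = 0$; the analog of edge-length additivity now reads $b(P_1+P_2) = b(P_1) + 2\ell$, since adding a segment in direction $v$ extends the polygon by $\ell$ lattice units in both normal directions $\pm v^\perp$. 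Combined with the identity $2\ell = 2|\interior_\Z(P_2)| + 2$, this absorbs the segment contribution with the correct sign. The remaining case of two non-parallel segments summing to a lattice parallelogram is handled by a direct Pick computation on the parallelogram.

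The main obstacle I anticipate is not any single computational step but rather the consistent bookkeeping across sign conventions: the exponent $\dim(P_i) - 1$ flips sign as dimensions vary, and the non-standard convention $|\interior_\Z(\{\mathrm{pt}\})| = 1$ must be used throughout. A conceptually cleaner unified proof would evaluate the two-variable Ehrhart function $L(n_1,n_2) := |(n_1 P_1 + n_2 P_2) \cap \Z^2|$ at $(n_1, n_2) = (-1,-1)$ via mixed Ehrhart reciprocity; this packages all dimensional combinations into a single formula, at the cost of developing the mixed Ehrhart machinery.
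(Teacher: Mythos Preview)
Your proposal is correct, and in fact goes well beyond what the paper does: the paper does not prove this lemma at all but simply cites it as a known formula (referring to \cite[Corollary~3.2]{nill2020mixed}). Your Pick-theorem argument, driven by the additivity of lattice perimeter under planar Minkowski sums, is a clean self-contained proof, and your case analysis for the lower-dimensional situations checks out, including the convention $|\interior_\Z(\{\mathrm{pt}\})|=1$ coming from Ehrhart--Macdonald reciprocity. The mixed-Ehrhart-reciprocity alternative you sketch at the end is essentially the approach taken in the cited reference, so your ``cleaner unified proof'' is in spirit the paper's route; what you gain with the Pick/perimeter argument is that it is entirely elementary and requires no mixed Ehrhart machinery.
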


\begin{proof}[Proof of Theorem~\ref{thm:3d}]
By Corollary~\ref{cor:thinCondition}, $P$ is hollow. We treat the three cases of Theorem~\ref{thm:3Dhollow} separately. A direct computation in \texttt{Magma} deals with case~\ref{it:exceptions}, see \cite{github}.

For case~\ref{it:width1}, denote by $P_1, P_2 \subseteq \R^2$ the preimages in $P$ of the vertices of $\Delta_1$. Note that $P_1$ and $P_2$ are faces of $P$ such that every lattice point of $P$ is either contained in $P_1$ or $P_2$. (We remark that $P$ is a Cayley polytope of $P_1$ and $P_2$ in the notation of Definition~\ref{def:cayley}.) We denote by $\interior_{\Z}^2(Q)$ the set of lattice points in the \emph{absolute} interior of a lattice polytope $Q \subseteq \R^2$. Then
\begin{align*}
	\sum_{F \leq P \text{ facets}} |\interior_{\Z}(F)| &= |\interior_{\Z}^2(P_1)| + |\interior_{\Z}^2(P_2)|, \\
	|\interior_{\Z}(2P)| &= |\interior_{\Z}(P_1 + P_2)|,
\end{align*}
where the second equation follows from the so-called Cayley trick. Therefore, Corollary~\ref{cor:thinCondition} translates into
\begin{equation*}
	|\interior_{\Z}^2(P_1)| + |\interior_{\Z}^2(P_2)| = |\interior_{\Z}(P_1 + P_2)|.
\end{equation*}
In case $\dim(P_1) = \dim(P_2) = 2$, plugging this into Lemma~\ref{lem:MVinterior} yields $\MV(P_1, P_2) = 1$, thus $(P_1, P_2) \cong (\Delta_2, \Delta_2)$ by \cite[Proposition~2.7]{cattani2013mixed}, hence $\deg(P) = 1$.
	
If $\dim(P_1)=2$ and $\dim(P_2)=1$, then Lemma~\ref{lem:MVinterior} yields $\MV(P_1,P_2) = 1 + |\interior_{\Z}(P_2)|$. On the other hand, $\MV(P_1,P_2) = V(\pi_{P_2}(P_1))(|\interior_{\Z}(P_2)|+1)$ by \cite[Theorem~5.3.1]{schneider_2013}, where $\pi_{P_2}$ is a lattice projection along the line segment $P_2$ and $V(\pi_{P_2}(P_1))$ denotes the lattice volume.
Hence, $V(\pi_{P_2}(P_1)) = 1$ and therefore $\pi_{P_2}(P_1) \cong \Delta_1$. The lattice projection of $P$ along $P_2$ is then a lattice projection onto $\Delta_2$. Thus, $\codeg(P) \ge 3$ and hence $\deg(P) \leq 1$, so either $\deg(P) = 1$ or $P \cong \Delta_3$ is a lattice pyramid.
	
The case $\dim(P_1) = \dim(P_2) = 1$ is similar. Lemma~\ref{lem:MVinterior} yields $\MV(P_1,P_2)= 1 + |\interior_{\Z}(P_1)| + |\interior_{\Z}(P_2)|$. On the other hand, again $\MV(P_1,P_2)=V(\pi_{P_2}(P_1))	(|\interior_{\Z}(P_2)|+1)$. We may assume $|\interior_{\Z}(P_1)| \leq |\interior_{\Z}(P_2)|$. If $V(\pi_{P_2}(P_1)) \geq 2$ or $V(\pi_{P_2}(P_1)) = 0$, we obtain a contradiction, so $V(\pi_{P_2}(P_1)) = 1$. The same argument as above shows $\deg(P) \leq 1$.

Finally, if one of the $P_i$ is zero-dimensional, then $P$ is a lattice pyramid.\vspace{1em}

It is left to study case~\ref{it:2delta2}, and we may assume $P$ to be of lattice width at least $2$ because width $1$ is equivalent to $P$ being a Cayley polytope which is precisely case \ref{it:width1}.
	
We distinguish several cases and always start by showing how, in each case, we can associate to each lattice point in the interior of a facet of $P$, in an injective way, a lattice point in the interior of $2P$. We then prove that there always exists an additional lattice point in the interior of $2P$, therefore showing that case~\ref{it:2delta2} does not yield any new thin polytopes by Corollary~\ref{cor:thinCondition}.
	
We may assume that $P$ projects onto $2 \Delta_2$ along the $z$-axis. As lattice projections map interior lattice points to interior lattice points, all interior lattice points of a facet of $P$ are of the form $x_1^a=(1,0,a)$, $x_2^a=(0,1,a)$, or $x_3^a=(1,1,a)$ for suitable $a \in \Z$. By fixing vertices $v_1=(0,2,\alpha)$, $v_2=(2,0,\beta)$, $v_3=(0,0,\gamma)$ of $P$ we hence obtain points $\frac{1}{2}(x_i^a + v_i) \in \interior(P)$, and therefore $(x_i^a + v_i) \in \interior_{\Z}(2P)$ for all $a \in \Z$ such that $x_i^a$ is an interior point of a facet of $P$. Then $(x_i^a+v_i) \neq (x_j^b+v_j)$ if~$i \neq j$ or~$a \neq b$.

Now we show the existence of an additional interior lattice point. Indeed, $P$ can have at most three facets containing interior lattice points, namely at most those facets, if there are such, that project to one of the three edges of $2 \Delta_2$. 

We proceed by distinguishing these different cases. If there is no such facet at all, then Corollary~\ref{cor:thinCondition} implies that $2P$ is hollow, so $\deg(P) \leq 1$, contradicting the fact that there is no lattice polytope of degree $\leq 1$ with width $>1$ by~Theorem~\ref{thm:BN}.
	
Next, assume that $P$ has exactly two facets containing interior lattice points, and say these are the facets opposite to $v_1$ and $v_2$. We pick two such points $(0,1,q)$, $(1,0,r)$. Then we obtain as many interior lattice points in $2P$ of the form $x_1^a+v_1$ or $x_2^a+v_2$ as there are points in the interiors of facets of $P$, and $(1,1,q+r)$ is an additional interior lattice point of $2P$.
	
Next, assume $P$ has three facets containing interior lattice points.	If there exists $i \in \{1,2,3\}$ such that the fiber of $2P$ containing $v_i$ contains more than one lattice point, then we can similarly construct an additional interior lattice point of $2P$ by considering the two points of minimal and maximal height in this fiber. Therefore, we may assume $v_1, v_2, v_3$ to be the unique lattice points of $P$ over $(0,2)$, $(2,0)$ and $(0,0)$, respectively.
This implies that all three facets $F_1$, $F_2$, $F_3$ of $P$ lying over the three edges of $2 \Delta_2$ have a special form. E.g., the facet projecting to the edge $[(0,0), (2,0)]$ is a quadrangle or triangle which, after a unimodular equivalence, looks similar to the following:
	
\begin{center}
		\begin{tikzpicture}[scale=0.5,baseline=-5pt] 
		\draw[->, line width=0.8pt] (-1,-2) -- (-1,2) node[midway, left]{$z$};\draw[->, line width=0.8pt] (0,-3) -- (2,-3) node[midway, below]{$x$};\draw[line width=1pt] (1, -2) -- (2, -1);\draw[line width=1pt] (1, 2) -- (2, -1);\draw[line width=1pt] (0, 0) -- (1, 2);\draw[line width=1pt] (0, 0) -- (1, -2);\draw[line width=1pt, dotted] (0, 0) -- (2, -1);\node at (1, -1/4) {}; \node at (1, 0) {}; \node[above] at (1,2) {$p_{\mathrm{max}}$};
		\fill (0,0) circle (0.08);\fill (1,-2) circle (0.08);\fill (1,-1) circle (0.08);\fill (1,0) circle (0.08);\fill (1,1) circle (0.08);\fill (1,2) circle (0.08);\fill (2,-1) circle (0.08);\fill (0,1) circle (0.08);\fill (0,2) circle (0.08);\fill (0,-1) circle (0.08);\fill (0,-2) circle (0.08);\fill (2,0) circle (0.08);\fill (2,-2) circle (0.08);\fill (2,1) circle (0.08);\fill (2,2) circle (0.08);
		\end{tikzpicture}
\end{center}
Now, we fix one interior lattice point $u_i \coloneqq x_i^{a_i}$ in each of the facets for some suitable $a_i \in \Z$.
The three maps $x_i^a \mapsto x_i^a + u_j$ for $(i,j) \in \{(1,2),(2,3),(3,1)\}$ map each interior lattice point of the three facets $F_i$ injectively to an interior lattice point of $2P$. Moreover, the images of these three maps are disjoint by construction.
Now, we may assume that in the facet projecting to the edge $[(0,0), (2,0)]$, the lattice point $p_{\mathrm{max}}$ lying over $(1,0)$ with maximal third coordinate is a vertex as in the picture. But then we obtain the additional interior lattice point $p_{\mathrm{max}}+u_2$ of $2P$ not covered by the images of the three maps above.\vspace{1em}
	
The only remaining case is the one where only one facet $F$ of $P$ contains interior lattice points. We may assume $F$ is the facet which projects onto the edge $[(0,0), (2,0)]$. Then all interior lattice points of $F$ project to $(1,0)$ and are of the form $x_1^a$ for $a \in \Z$ ranging in a suitable interval. From this we obtain $|\interior_{\Z}(F)|$ interior lattice points $v_1 + x_1^a$ of $2P$. Observe that all of them have second coordinate $2$. Therefore, it is enough to show that $2P$ contains an interior lattice point with second coordinate $1$.

Again we proceed by distinguishing cases. If $F$ contains at least three lattice points projecting to $(1,0)$, then it contains the convex hull of $(1,0,a)$, $(1,0, a+1)$, $(1,0,a+2)$, $(2,0,b)$ for some $a,b \in \Z$. Recall that $v_1=(0,2,\alpha)$. Then the point
\begin{equation*}
	2 \cdot \left(\frac{1}{4} v_1 + \frac{1}{4} (1,0,a) + \frac{1}{4} (2,0,b) + \frac{1}{4} (1,0,a+n)\right) = (2,1,a + \frac{\alpha + b + n}{2})
\end{equation*}is an interior lattice point of $2P$ with second coordinate $1$ for precisely one choice of $n \in \{1, 2\}$. Hence, by Corollary~\ref{cor:thinCondition}, $P$ is not thin if $F$ contains at least three lattice points over $(1,0)$, which in particular includes the case $|\interior_{\Z}(F)| \geq 3$. 

Before we proceed further let us observe that the fibers of $P$ over the points $(0,0)$ and $(2,0)$ both consist of at most three lattice points because otherwise another facet than $F$ would contain interior lattice points. This is because, up to unimodular equivalence, there are exactly two lattice pyramids of height $2$ over a lattice segment of length $\geq 3$, and both contain an interior lattice point:
\begin{center}
		\begin{tikzpicture}[scale=0.5,baseline=-5pt] 
		\draw[line width=1pt] (0, 2) -- (3, 0);\draw[line width=1pt] (0, 0) -- (3, 0);\draw[line width=1pt] (0, 0) -- (0, 2);\draw[line width=1pt] (0, 0) -- (1, 2);\draw[line width=1pt] (1, 2) -- (3, 0);\node at (1, 2/3) {}; 
		\fill (0,0) circle (0.08);\fill (1,0) circle (0.08);\fill (2,0) circle (0.08);\fill (3,0) circle (0.08);\fill (0,1) circle (0.08);\fill (1,1) circle (0.11);\fill (0,2) circle (0.08);\fill (1,2) circle (0.08);\fill (2,1) circle (0.08);\fill (2,2) circle (0.08);\fill (3,1) circle (0.08);\fill (3,2) circle (0.08);
		\end{tikzpicture}
\end{center}

Now, we can deal with the remaining case $|\interior_{\Z}(F)| \in \{1,2\}$. Let $F' \subseteq F$ run through the inclusion-minimal subpolygons of $F$ which contain the same interior lattice points as $F$. We will show that there are only two possibilities for $F'$.

Let us first consider the case $|\interior_{\Z}(F)| = 2$. By the previous argument, after a suitable shear fixing the $x = 1$ line (within the $x$-$z$-plane, i.e. $y=0$), $F'$ fits inside the following box:

\begin{center}
		\begin{tikzpicture}[scale=0.5,baseline=-5pt] 
		\draw[->, line width=0.8pt] (-1,0) -- (-1,3) node[midway, left]{$z$};\draw[->, line width=0.8pt] (0,-1) -- (2,-1) node[midway, below]{$x$};\draw[line width=1pt] (0, 3) -- (2, 3);\draw[line width=1pt] (2, 0) -- (2, 3);\draw[line width=1pt] (0, 0) -- (2, 0);\draw[line width=1pt] (0, 0) -- (0, 3);\node at (1, 3/2) {}; 
		\fill (0,0) circle (0.08);\fill (0,1) circle (0.08);\fill (0,2) circle (0.08);\fill (0,3) circle (0.08);\fill (1,0) circle (0.08);\fill (1,1) circle (0.08);\fill (1,2) circle (0.08);\fill (1,3) circle (0.08);\fill (2,0) circle (0.08);\fill (2,1) circle (0.08);\fill (2,2) circle (0.08);\fill (2,3) circle (0.08);
		\end{tikzpicture}
\end{center}

As $F'$ is inclusion-minimal, it is isomorphic to one of the following polygons:

\begin{center}
		\begin{tikzpicture}[scale=0.5,baseline=-5pt] 
		\draw[line width=1pt] (1, 3) -- (2, 0);\draw[line width=1pt] (0, 0) -- (2, 0);\draw[line width=1pt] (0, 0) -- (1, 3);\node at (1, 1) {}; 
		\fill (0,0) circle (0.08);\fill (1,0) circle (0.08);\fill (1,1) circle (0.08);\fill (1,2) circle (0.08);\fill (1,3) circle (0.08);\fill (2,0) circle (0.08);
		\end{tikzpicture}
		\hspace{10pt}
		\begin{tikzpicture}[scale=0.5,baseline=-5pt] 
		\draw[line width=1pt] (1, 3) -- (2, 1);\draw[line width=1pt] (0, 0) -- (2, 1);\draw[line width=1pt] (0, 0) -- (1, 3);\node at (1, 4/3) {}; 
		\fill (0,0) circle (0.08);\fill (1,1) circle (0.08);\fill (1,2) circle (0.08);\fill (1,3) circle (0.08);\fill (2,1) circle (0.08);
		\end{tikzpicture}
		\hspace{10pt}
		\begin{tikzpicture}[scale=0.5,baseline=-5pt] 
		\draw[line width=1pt] (1, 0) -- (2, 2);\draw[line width=1pt] (1, 3) -- (2, 2);\draw[line width=1pt] (0, 2) -- (1, 3);\draw[line width=1pt] (0, 2) -- (1, 0);\node at (1, 7/4) {}; 
		\fill (0,2) circle (0.08);\fill (1,0) circle (0.08);\fill (1,1) circle (0.08);\fill (1,2) circle (0.08);\fill (1,3) circle (0.08);\fill (2,2) circle (0.08);
		\end{tikzpicture}
		\hspace{10pt}
		\begin{tikzpicture}[scale=0.5,baseline=-5pt] 
		\draw[line width=1pt] (1, 0) -- (2, 2);\draw[line width=1pt] (1, 3) -- (2, 2);\draw[line width=1pt] (0, 1) -- (1, 3);\draw[line width=1pt] (0, 1) -- (1, 0);\node at (1, 3/2) {}; 
		\fill (0,1) circle (0.08);\fill (1,0) circle (0.08);\fill (1,1) circle (0.08);\fill (1,2) circle (0.08);\fill (1,3) circle (0.08);\fill (2,2) circle (0.08);
		\end{tikzpicture}
		\hspace{10pt}
		\begin{tikzpicture}[scale=0.5,baseline=-5pt] 
		\draw[line width=1pt] (0, 2) -- (2, 3);\draw[line width=1pt] (2, 1) -- (2, 3);\draw[line width=1pt] (0, 0) -- (2, 1);\draw[line width=1pt] (0, 0) -- (0, 2);\node at (1, 3/2) {}; 
		\fill (0,0) circle (0.08);\fill (0,1) circle (0.08);\fill (0,2) circle (0.08);\fill (1,1) circle (0.08);\fill (1,2) circle (0.08);\fill (2,1) circle (0.08);\fill (2,2) circle (0.08);\fill (2,3) circle (0.08);
		\end{tikzpicture}
\end{center}

But only the last one does not contain three lattice points over $(1,0)$. 

Let us also consider the case of $|\interior_{\Z}(F)| = 1$. Here, we can fit $F'$ inside the standard square $[0,2]^2$ (inside the $x$-$z$-plane) after a suitable shear fixing the $x = 1$ line, so $F'$ can be taken to be one of the following polygons:

\begin{center}
		\begin{tikzpicture}[scale=0.5,baseline=-5pt] 
		\draw[line width=1pt] (1, 2) -- (2, 0);\draw[line width=1pt] (0, 0) -- (2, 0);\draw[line width=1pt] (0, 0) -- (1, 2);\node at (1, 2/3) {}; 
		\fill (0,0) circle (0.08);\fill (1,0) circle (0.08);\fill (1,1) circle (0.08);\fill (1,2) circle (0.08);\fill (2,0) circle (0.08);
		\end{tikzpicture}
		\hspace{10pt}
		\begin{tikzpicture}[scale=0.5,baseline=-5pt] 
		\draw[line width=1pt] (1, 0) -- (2, 1);\draw[line width=1pt] (1, 2) -- (2, 1);\draw[line width=1pt] (0, 1) -- (1, 2);\draw[line width=1pt] (0, 1) -- (1, 0);\node at (1, 1) {}; 
		\fill (0,1) circle (0.08);\fill (1,0) circle (0.08);\fill (1,1) circle (0.08);\fill (1,2) circle (0.08);\fill (2,1) circle (0.08);
		\end{tikzpicture}
		\hspace{10pt}
		\begin{tikzpicture}[scale=0.5,baseline=-5pt] 
		\draw[line width=1pt] (1, 2) -- (2, 1);\draw[line width=1pt] (0, 0) -- (2, 1);\draw[line width=1pt] (0, 0) -- (1, 2);\node at (1, 1) {}; 
		\fill (0,0) circle (0.08);\fill (1,1) circle (0.08);\fill (1,2) circle (0.08);\fill (2,1) circle (0.08);
		\end{tikzpicture}
\end{center}

Again, only the last one does not contain three lattice points over $(1,0)$. 

Lastly, for each of these two remaining polygons $F'$ we may choose a lattice subpolytope $P'$ of $P$ that is a pyramid of height $2$ over $F'$. We observe that for given $F'$ there are at most four non-isomorphic possibilities for $P'$ to consider as the first two coordinates of an apex in $\R^2 \times \{2\}$ over a base polytope in $\R^2 \times \{0\}$ may be chosen by a unimodular shearing to be in $\{(0,0),(1,0),(0,1),(1,1)\}$. Now, an explicit computation in \texttt{SageMath} shows that for all these at most eight cases we have $|\interior_{\Z}(2P')| > |\interior_{\Z}(F')| = |\interior_{\Z}(F)|$, concluding the proof.
\end{proof}

We can now answer the original question in \cite{GKZ94} in dimension $3$. 

\begin{cor}
A three-dimensional lattice simplex is thin if and only if it is a lattice pyramid.\label{cor:tetra}
\end{cor}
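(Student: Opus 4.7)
The plan is to derive this as an immediate consequence of Theorem~\ref{thm:3d}, essentially by observing that among the two families appearing there—lattice pyramids over polygons and Lawrence prisms—only the first can yield a $3$-simplex.

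First, the "if" direction: if $P$ is a lattice pyramid, then $P$ is thin by Proposition~\ref{prop:pyr}, with no simplex assumption needed.

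For the converse, suppose $P \subset \R^3$ is a thin lattice simplex of dimension $3$. By Theorem~\ref{thm:3d}, $P$ is either a lattice pyramid over a lattice polygon or (isomorphic to) a Lawrence prism. In the first case we are done. In the second case, a $3$-dimensional Lawrence prism, by Definition, has the form $\conv(0, e_1, e_2, k_0 e_3, e_1 + k_1 e_3, e_2 + k_2 e_3)$ with $k_0, k_1, k_2 \in \Z_{\ge 1}$, so its vertex set has cardinality $6$. Since a $3$-simplex has only $4$ vertices, $P$ cannot be a Lawrence prism. Hence $P$ is a lattice pyramid over a lattice polygon, and since $P$ is a $3$-simplex this polygon must be a triangle.

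There is no real obstacle here; the only thing to verify is the combinatorial count that a $3$-dimensional Lawrence prism genuinely has six distinct vertices (which is clear from the defining expression, as the first three lie in the hyperplane $\{x_3 = 0\}$ and the last three have $x_3$-coordinate $k_i \ge 1$, and within each triple the points are affinely independent). The heavy lifting has already been done in the proof of Theorem~\ref{thm:3d}.
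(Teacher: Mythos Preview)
Your proof is correct and follows the same approach as the paper, which simply states that the corollary ``follows directly from Theorem~\ref{thm:3d}.'' You have just spelled out the immediate observation that a three-dimensional Lawrence prism has six vertices and hence cannot be a simplex.
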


This follows directly from Theorem~\ref{thm:3d}. The reader is cautioned not to jump to the conclusion that the same result may be true in higher dimensions. In dimension $4$, $[-1,1] \circ_\Z 2 \Delta_2$ is an example of a (trivially) thin simplex that is not a lattice pyramid.

\section{Thin Gorenstein polytopes and Gorenstein joins}
\label{sec:join}

\subsection{Gorenstein polytopes and their duals}

\begin{defn}
A lattice polytope $P$ is called {\em Gorenstein} if $h^*_P$ is palindromic.
\end{defn}

Let us recall that {\em reflexive polytopes} are precisely the Gorenstein polytopes of codegree one. For more background on reflexive and Gorenstein polytopes, its relevance in toric geometry and mirror symmetry, as well as alternative characterizations we refer to \cite{BB96b, batyrev2008combinatorial, NS13}. Here, let us summarize definitions and properties of the dual Gorenstein polytope. We remark that the codegree of a Gorenstein polytope is often called its index.

\begin{defn}
    Let $P \subset \R^d$ be a $d$-dimensional Gorenstein polytope. In this case, the dilate $\codeg(P) \cdot P$ is a reflexive polytope (up to lattice translation), and we denote its unique interior lattice point by $w$. Then 
\[P^\times := \{y \in (\R^{d+1})^* \;:\; \pro{y}{w} = 1 \text{ and } \pro{y}{x} \ge 0 \;\forall\, x \in P \times \{1\}\}\]
is called the {\em dual Gorenstein polytope} of $P$. 
\label{def:dual-def}
\end{defn}

\begin{prop}
Let $P \subset \R^d$ be a $d$-dimensional Gorenstein polytope. Then $P^\times$ is a Gorenstein polytope of the same dimension and degree as $P$, and it is combinatorially dual to $P$. 
\label{prop:dual-def}
\end{prop}

Note that, if a Gorenstein polytope is lower-dimensional, we consider, as usual, its ambient lattice in order to get its dual Gorenstein polytope.

\begin{defn}
If $F$ is a face of $P$, we denote by $F^*$ the {\em dual face}, i.e., the corresponding face of $P^\times$.
\end{defn}

Attention: it is important to distinguish the dual face $F^\ast$ from $F^\times$, the latter being defined only if $F$ is itself a Gorenstein polytope which is not true in general. Even if this is the case, the two polytopes might have completely different dimensions (since the one definition is relative to $P$ while the other one is intrinsic). 

Local $h^*$-polynomials of Gorenstein polytopes (often called $\tilde{S}$-polynomials) allow to give an elegant formula for computing stringy $E$-polynomials of Calabi-Yau complete intersections in toric Gorenstein Fano varieties (we refer to \cite{BorisovMavlyutov, batyrev2008combinatorial}). In this context, several questions about stringy $E$-polynomials are still open, see \cite{batyrev2008combinatorial,NS13}. Here, we make some progress in this direction by addressing the question when the local $h^*$-polynomial of a Gorenstein polytope vanishes. As one consequence of our main result, Theorem~\ref{thm:main}, we will see that not only the degree of the $h^*$-polynomials of Gorenstein polytopes and their duals are the same but also of their $\ell^*$-polynomials (Corollary~\ref{cor:dualdeg}).

\subsection{Joins, Cayley polytopes, and Cayley joins}

In the sequel let us discuss some important notions of decomposing lattice polytopes that turn up naturally when studying Gorenstein polytopes (we refer to \cite{batyrev2008combinatorial, NS13}). Let us first introduce a formal notation for a lattice polytope being a join (as already defined in Subsection~\ref{subsec:ex}).

\begin{defn}
	Let $P \subseteq \R^d$ be a polytope and $F, G$ non-empty subsets of $P$. Then $P$ is the {\em join of $F$ and $G$}, written $P = F \circ G$, if $P = \conv(F,G)$ and $\dim(P) = \dim(F) + \dim(G) + 1$. 
\end{defn}

Equivalently, $P$ is affinely-isomorphic to the free join $F \circ_\Z G$. In particular, $F$ and $G$ are automatically faces of $P$.

\begin{rmk}{\rm Note that the join property is associative. Namely, given faces $F,G,H$ of $P$, then $P=F \circ (G \circ H)$, respectively, $P=(F \circ G) \circ H$, are both equivalent to $P=\conv(F,G,H)$ and $\dim(P) = \dim(F) + \dim(G) + \dim(H) + 2$.\label{remark:assoc}
}
\end{rmk}

Let us also give the formal notation for a lattice polytope being a Cayley polytope. We recall that the notion of Cayley polytopes and Cayley sums was already shortly mentioned and defined in Subsection~\ref{subsec:two}. Here, we will solely focus on the case of two factors. Note that if a Cayley polytope has more than two factors, it is still a Cayley polytope with two factors.

\begin{defn}
	Let $P \subseteq \R^d$ be a lattice polytope and $F, G$ non-empty subsets of $P$. Then $P$ is the {\em Cayley polytope} of (factors) $F$ and $G$, written $P = F * G$, if $P = \conv(F,G)$ and there exists an affine-linear map $\R^d \to \R$ mapping $\Z^d \to \Z$, such that $F \mapsto 0$ and $G \mapsto 1$. In other words, $P$ is a Cayley polytope if and only if there is a lattice projection mapping $P$ onto $\Delta_1$.
	\label{def:cayley}
\end{defn}

If $P=F*G$, then $F$ and $G$ are necessarily faces of $P$. Cayley polytopes can also be characterized as lattice polytopes with lattice width one. Cayley sums are explicit descriptions of Cayley polytopes.

\begin{rmk}
Given lattice polytopes $F$ and $G$ in $\R^d$, the convex hull of $F \times \{0\}$ and $G \times \{1\}$ is called the {\em Cayley sum} of $F$ and $G$. Its dimension is one larger than the dimension of the Minkowski sum of $F$ and $G$. If $P=F*G$, then $P$ is isomorphic to the Cayley sum of $F$ and $G$.\label{rem:cayley}
\end{rmk}

Cayley sums are important in the construction of high-dimensional Gorenstein polytopes, see e.g. \cite[Theorem~2.6]{batyrev2008combinatorial}. Note that the degree of a Cayley polytope is at most the dimension of the Minkowski sum of its factors, see Proposition~\cite[Proposition~1.12]{batyrev2007multiples}. 

\begin{defn}
	Let $P \subseteq \R^d$ be a full-dimensional lattice polytope and $F,G \subseteq P$ faces. Then $P$ is the {\em Cayley join} of $F$ and $G$, written $P = F \circ_{\Cay} G$, if $P=F \circ G$ and $P=F * G$.\label{def:cayley-join}
\end{defn}

Clearly, the notion of a Cayley join is more restrictive than that of a Cayley polytope (e.g., $[0,1]^2$ is a Cayley polytope of two edges but not a Cayley join). The reader should be aware that Cayley polytopes and Cayley joins are in general not associative in the sense of Remark~\ref{remark:assoc}, see Example~\ref{ex:tetra} below.

\smallskip
Let us recall some properties of a Gorenstein polytope that is a join or Cayley join, see \cite[Lemma~4.8, Proposition~4.9]{NS13}.


\begin{prop}
\label{prop:NS-prop}
	Let $P$ be a Gorenstein polytope. 
	\begin{itemize}
	    \item If $P=F \circ G$, then $P^\times = F^* \circ G^*$ with $F$ and $G^*$ (respectively, $G$ and $F^*$) being combinatorially dual to each other.
\item	If $P=F \circ_{\Cay} G$, then $F^\ast$ is a Gorenstein polytope with dual Gorenstein polytope $(F^\ast)^\times$, which can be identified with the lattice polytope $G$ with respect to a refined lattice.
	\end{itemize}
	In the last statement, the lattice does not have to be refined if the Cayley join is even a free join.
\end{prop}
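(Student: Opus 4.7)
The plan is to handle the two bullets in turn. The first is a combinatorial analysis of the face lattice of a join, and the second combines this with the extra lattice datum from the Cayley projection together with the multiplicativity of $h^\ast$-polynomials under free joins.

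For the first bullet, I would use that the face lattice of a join $P = F \circ G$ is canonically isomorphic to the product $[\emptyset, F] \times [\emptyset, G]$, where $(F', G')$ corresponds to $\conv(F' \cup G')$ (with $(\emptyset, \emptyset)$ identified with the empty face). Under this identification the upper interval $[F, P]$ equals $\{F\} \times [\emptyset, G] \cong [\emptyset, G]$. Since by general polytope duality the interval $[\emptyset, F^\ast]$ in $P^\times$ is anti-isomorphic to $[F, P]$, this forces $F^\ast$ to be combinatorially dual to $G$, and symmetrically $G^\ast$ to $F$. In particular $\dim F^\ast + \dim G^\ast + 1 = \dim G + \dim F + 1 = \dim P^\times$. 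The facets of $P$ are of the form $F' \circ G$ or $F \circ G'$ (allowing the degenerate cases where $F$ or $G$ is a point), whose dual vertices in $P^\times$ lie in $G^\ast$ or $F^\ast$ respectively, so $P^\times = \conv(F^\ast, G^\ast)$; combined with the dimension count, $P^\times = F^\ast \circ G^\ast$.

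For the second bullet, the Cayley hypothesis provides a lattice projection $\pi\colon P \to \Delta_1$ with $\pi(F) = 0$ and $\pi(G) = 1$. The plan is to show that the ambient lattice of $P$ admits a splitting along the affine spans of $F$, $G$ and the Cayley direction — possibly only after passing between $M$ and a finite-index sublattice $M' \subseteq M$ — which realises $P$ as the free join $F \circ_\Z G$. Dualising, $P^\times$ becomes a free join $F^\ast \circ_\Z G^\ast$ in the corresponding dual lattice, and Proposition~\ref{prop:mult} then yields $h^\ast_{P^\times}(t) = h^\ast_{F^\ast}(t)\, h^\ast_{G^\ast}(t)$. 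Since $P^\times$ is Gorenstein as the dual of a Gorenstein polytope, this product is palindromic, and the standard fact that a free join is Gorenstein if and only if both factors are (which follows from the Ehrhart-theoretic description of codegrees together with the multiplicativity of $h^\ast$ under free joins) lets me conclude that $F^\ast$ is Gorenstein. Applying the first bullet once more, now to the Gorenstein polytope $F^\ast$, identifies its dual $(F^\ast)^\times$ combinatorially with $G$.

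For the final assertion, in the free-join case $P = F \circ_\Z G$ the ambient lattice $M$ itself splits as a direct sum along $F$, $G$ and the Cayley direction, so no refinement is required and $(F^\ast)^\times$ coincides with $G$ as a lattice polytope. In the general Cayley case, the index of the splitting failure in $M$ governs the mismatch between the natural lattice on $(F^\ast)^\times$ (inherited via the dual construction) and the ambient lattice of $G$, which matches the refinement appearing in the statement. The main obstacle is precisely this lattice bookkeeping: one has to verify that the refinement index agrees on both sides by carefully tracing through the dual lattice $M^\vee$, the affine spans of $F^\ast$ and $G^\ast$, and the induced lattices, and by comparing these with the lattices coming from $F$ and $G$ inside $M$. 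The combinatorial and multiplicativity parts of the argument, by contrast, are essentially immediate from the first bullet together with Proposition~\ref{prop:mult}.
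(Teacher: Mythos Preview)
The paper does not prove this proposition; it is quoted from \cite[Lemma~4.8, Proposition~4.9]{NS13}. So there is no in-paper argument to compare with, and I evaluate your sketch on its own.

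Your treatment of the first bullet is fine and is the standard combinatorial argument. The second bullet, however, has genuine gaps that your closing disclaimer about ``lattice bookkeeping'' does not cover. First, passing to the sublattice $M' = M(F)\oplus M(G)$ to turn $P$ into a free join need not preserve the Gorenstein property (the $h^\ast$-polynomial changes under coarsening), so you cannot speak of ``dualising'' $P$ with respect to $M'$; and even if you could, the resulting dual lives in a \emph{super}lattice of $N=M^\ast$, so the product formula from Proposition~\ref{prop:mult} computes the wrong $h^\ast$. Second, your justification of ``a free join is Gorenstein iff both factors are'' via palindromicity of the product is invalid: with $p=1+2t+t^2+t^3$ and $q=p^{\mathrm{rev}}=1+t+2t^2+t^3$ one has $pq=1+3t+5t^2+7t^3+5t^4+3t^5+t^6$ palindromic with neither factor palindromic, and both have constant term and leading coefficient~$1$. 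Third, ``applying the first bullet once more'' to $F^\ast$ makes no sense (nothing says $F^\ast$ is a join); at best you get that $(F^\ast)^\times$ is \emph{combinatorially} isomorphic to $G$, which is far weaker than the lattice identification the statement asserts.

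The argument that actually works (and is what \cite{NS13} does) stays in the cone picture: identify $C_{F^\ast}=C_P^\vee\cap C_F^\perp$ and its dual $C_{F^\ast}^\vee$ with the image of $C_P$ under the quotient $V\to V/\operatorname{span}(C_F)$. Because $P=F\circ G$, this projection restricts to a linear isomorphism on $\operatorname{span}(C_G)$ carrying $C_G$ onto $C_{F^\ast}^\vee$, and the quotient lattice pulls back to a (possibly strict) refinement of the ambient lattice of $G$. The Cayley functional $u$ with $u(F)=0$, $u(G)=1$, viewed as a lattice point of $C_F^\perp$, then witnesses that $C_{F^\ast}^\vee$ is a Gorenstein cone and at the same time pins down its height-$1$ slice $(F^\ast)^\times$ as $G$ in that refined lattice; in the free-join case the quotient lattice agrees with the ambient lattice of $G$ and no refinement occurs.
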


\subsection{Gorenstein joins}

In \cite{NS13} the following notion was defined. 

\begin{defn}\label{def:Gorenstein-join}
Let $F$ and $G$ be faces of a Gorenstein polytope $P$. We say $P$ is a {\em Gorenstein join} of $F$ and $G$, denoted by $P = F \circ_{\Gor} G$, if $P = F \circ_{\Cay} G$ and $P^\times = F^* \circ_{\Cay} G^*$. We call $F$ and $G$ the {\em factors} of the Gorenstein join. 
\end{defn}

We remark that Gorenstein joins do not have to be free joins, see \cite[Example~4.14]{NS13}. The following result, a strengthening of Stanleys monotonicity theorem in the case of faces of Gorenstein polytopes, motivated the previous definition of a Gorenstein join and gives a direct enumerative criterion for its existence (\cite[Theorems~3.6+4.12]{NS13}).

\begin{thm}
    Let $P$ be a Gorenstein polytope and $F$ a non-empty proper face of $P$. Then $\codeg(P) \le \codeg(F) + \codeg(F^*)$ (equivalently, $\deg(F) + \deg(F^*) \le \deg(P)$), with equality if and only if $P$ is a Gorenstein join with factor $F$. In this case, $F$ is a Gorenstein polytope.
    \label{thm:NS}
\end{thm}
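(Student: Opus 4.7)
The plan is to establish the numerical inequality first and then address the equality characterization. By the combinatorial duality in Proposition~\ref{prop:dual-def}, $\dim(F) + \dim(F^*) + 1 = \dim(P)$, so combined with $\codeg = \dim + 1 - \deg$, the inequality $\codeg(P) \le \codeg(F) + \codeg(F^*)$ is equivalent to $\deg(F) + \deg(F^*) \le \deg(P)$, and I would work with this degree version throughout.

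For the inequality, the starting point is the positive decomposition
\[ h^*_P(t) \;=\; \sum_{\emptyset \le G \le P} l^*_G(t)\, g_{[G,P)}(t) \]
from Proposition~\ref{prop:schepers}. The Gorenstein duality $P \leftrightarrow P^\times$ induces an anti-isomorphism of face-lattice intervals that identifies $g_{[F,P)}(t)$ with a $g$-polynomial built from the face lattice of $F^*$, so that faces above $F$ in $P$ are controlled by faces of $F^*$. The plan is then to show that the top-degree support of $h^*_P(t)$ reaches at least $\deg(F) + \deg(F^*)$ by combining Stanley's monotonicity $h^*_F \le h^*_P$ with Kalai's duality Theorem~\ref{thm:Kalai} and the palindromicity of $h^*_P$ coming from Gorensteinness. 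More concretely, I expect a product-type lower bound of the shape $h^*_F(t) \cdot (\text{a polynomial of degree } \deg(F^*)) \le h^*_P(t)$, extracted from the decomposition by isolating the summand at $G=F$ and using the nonnegativity of every other term.

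For the easy direction of the equality (Gorenstein join implies equality), suppose $P = F \circ_\Gor G$. The Cayley structure $P = F \circ_\Cay G$ provides a lattice projection $P \to \Delta_1$ sending $F \mapsto 0$ and $G \mapsto 1$. Interior lattice points of $nP$ distribute into fibers indexed by $1, \ldots, n-1$, and a direct count, using that the fibers are combinatorially Minkowski sums involving $F$ and $G$, yields $\codeg(P) = \codeg(F) + \codeg(G)$. By Proposition~\ref{prop:NS-prop}, $G$ is identified with $(F^*)^\times$ up to a lattice refinement, and since Gorenstein duality preserves codegree, one has $\codeg(G) = \codeg(F^*)$, giving equality.

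The main obstacle lies in the converse. Assume $\codeg(P) = \codeg(F) + \codeg(F^*)$; the goal is to construct $G$ with $P = F \circ_\Cay G$ and $P^\times = F^* \circ_\Cay G^*$. The idea is to exploit extremality in the decomposition of Proposition~\ref{prop:schepers}: the codegree equality forces the low-degree coefficients of $h^*_P(t)$ to be explained entirely by contributions from $F$ and faces below it, which should in turn force every lattice point in the first $\codeg(F^*)-1$ dilates of $P$ to lie on $F$. This rigidity would produce a lattice functional exhibiting $P$ as a Cayley polytope with factor $F$; applying the symmetric argument to $P^\times$ with the face $F^*$ then yields the dual Cayley structure. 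That $F$ itself is Gorenstein follows from Proposition~\ref{prop:NS-prop} applied on the dual side. The hardest part will be this rigidity step: extracting a concrete Cayley projection from purely numerical extremality requires carefully pinning down where the degree $\deg(P)$ is actually attained in the decomposition and leveraging the nonnegativity theorems for $l^*$- and toric $g$-polynomials to rule out alternative configurations.
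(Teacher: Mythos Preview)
The paper does not actually prove Theorem~\ref{thm:NS}; it is quoted from \cite[Theorems~3.6+4.12]{NS13} as a known input. So there is no ``paper's own proof'' to compare against here.

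That said, your sketch has real gaps that would not close even if fleshed out along the lines you indicate. For the inequality, invoking Proposition~\ref{prop:schepers} and looking at the summand $l^*_F(t)\,g_{[F,P)}(t)$ gives an upper bound $\deg(l^*_F)+\deg(g_{[F,P)})\le\deg(P)$, and Kalai's theorem identifies $\deg(g_{[F,P)})=\deg(g_{F^*})$; but you need $\deg(F)+\deg(F^*)\le\deg(P)$, and in general $\deg(l^*_F)\le\deg(F)$ and $\deg(g_{F^*})\le\deg(F^*)$ point the wrong way. The actual argument in \cite{NS13} is a strengthened Stanley monotonicity for faces of Gorenstein polytopes (this is exactly how the paper advertises the result just before stating it), not a direct reading of the $l^*$--$g$ decomposition.

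For the equality characterization, your ``easy'' direction already leans on $\codeg(G)=\codeg(F^*)$ via Proposition~\ref{prop:NS-prop}, but note that in this paper Lemma~\ref{lemma:GorensteinCodegree} derives that very fact \emph{from} Theorem~\ref{thm:NS}, so you would need an independent argument to avoid circularity. For the hard direction, you correctly identify the rigidity step (producing a Cayley projection from the numerical equality) as the crux, but nothing in your outline explains how to manufacture the required lattice functional; ``forces every lattice point in the first $\codeg(F^*)-1$ dilates of $P$ to lie on $F$'' is the conclusion you want, not a mechanism. This step in \cite{NS13} is genuinely the content of their Theorem~4.12 and does not fall out of the nonnegativity statements available in the present paper.
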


Gorenstein polytopes that are not Gorenstein joins have been previously also called {\em irreducible} in \cite{NS13}. As we see from the following result it is not necessary to compute the dual Gorenstein polytope to check whether a Cayley join is a Gorenstein join.

\begin{lem}\label{lemma:GorensteinCodegree}
	Let $P = F \circ_{\Cay} G$ be a Gorenstein polytope which is the Cayley join of two faces $F, G \leq P$. Then $P = F \circ_{\Gor} G$ if and only if $\codeg(P) = \codeg(F) + \codeg(G)$ (or equivalently, $\deg(P) = \deg(F)+\deg(G)$).
\end{lem}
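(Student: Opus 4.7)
The plan is to prove the lemma by assembling Theorem~\ref{thm:NS} and Proposition~\ref{prop:NS-prop}, hinging on the observation that codegree is non-increasing under lattice refinement.

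The underlying monotonicity is immediate: if a lattice polytope $Q$ is equipped with a refinement of its lattice, every lattice point in the relative interior of a dilate $mQ$ stays interior under the finer lattice, so the codegree can only decrease upon refinement. Coupled with Proposition~\ref{prop:dual-def}, which says that Gorenstein duality preserves dimension and degree and hence codegree, I can apply Proposition~\ref{prop:NS-prop} to $P = F \circ_{\Cay} G$ to conclude that $F^\ast$ is Gorenstein and
\[\codeg(F^\ast) \;=\; \codeg((F^\ast)^\times) \;\le\; \codeg(G),\]
since $(F^\ast)^\times$ is, as a polytope, $G$ equipped with a (possibly) finer lattice. Feeding this into Theorem~\ref{thm:NS} applied to the face $F \le P$ yields the key chain
\[\codeg(P) \;\le\; \codeg(F) + \codeg(F^\ast) \;\le\; \codeg(F) + \codeg(G).\]

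For the $(\Leftarrow)$ direction, the assumption $\codeg(P) = \codeg(F) + \codeg(G)$ forces both inequalities in the chain to be equalities; in particular $\codeg(P) = \codeg(F) + \codeg(F^\ast)$. By the equality case of Theorem~\ref{thm:NS}, $P$ is then a Gorenstein join with factor $F$. Because the other factor of a Cayley join is uniquely determined by the first (it is the face of $P$ spanned by the vertices not on $F$), this other factor must coincide with $G$, so $P = F \circ_{\Gor} G$.

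For the $(\Rightarrow)$ direction, assume $P = F \circ_{\Gor} G$, so that additionally $P^\times = F^\ast \circ_{\Cay} G^\ast$. I apply Proposition~\ref{prop:NS-prop} a second time, now to this dual Cayley join with $G^\ast$ in the role of the first factor. This yields that $(G^\ast)^\ast = G$ is Gorenstein and that $G^\times$ is identified with $F^\ast$ equipped with a refined lattice. The same codegree-monotonicity argument then produces the reverse inequality
\[\codeg(G) \;=\; \codeg(G^\times) \;\le\; \codeg(F^\ast),\]
so the previous bound is tight: $\codeg(F^\ast) = \codeg(G)$. Since Theorem~\ref{thm:NS} gives $\codeg(P) = \codeg(F) + \codeg(F^\ast)$ in the Gorenstein join case, substitution yields $\codeg(P) = \codeg(F) + \codeg(G)$. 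The parenthetical equivalence $\deg(P) = \deg(F) + \deg(G)$ is immediate from $\deg + \codeg = \dim + 1$ together with $\dim(P) = \dim(F) + \dim(G) + 1$, which holds because a Cayley join is in particular a join. I do not foresee a serious obstacle: the whole argument is a bookkeeping exercise around Theorem~\ref{thm:NS} and Proposition~\ref{prop:NS-prop}. The one mildly delicate point is that Proposition~\ref{prop:NS-prop} must be invoked twice, on $P$ and on $P^\times$, so that the a priori one-sided bound $\codeg(F^\ast) \le \codeg(G)$ gets upgraded to an equality precisely under the Gorenstein join hypothesis.
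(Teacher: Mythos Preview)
Your proof is correct and follows essentially the same approach as the paper: both hinge on Theorem~\ref{thm:NS} and the inequality $\codeg(F^\ast) = \codeg((F^\ast)^\times) \le \codeg(G)$ obtained from Proposition~\ref{prop:NS-prop} via codegree monotonicity under lattice refinement. The only minor difference is in the $(\Rightarrow)$ direction: the paper simply cites \cite[Theorem~4.12]{NS13} for $\codeg(G) = \codeg(F^\ast)$, whereas you re-derive it by applying Proposition~\ref{prop:NS-prop} a second time to the dual Cayley join $P^\times = G^\ast \circ_{\Cay} F^\ast$ to obtain the reverse inequality $\codeg(G) \le \codeg(F^\ast)$---a slightly more self-contained but equivalent argument.
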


\begin{proof}
	By Theorem~\ref{thm:NS}, $P = F \ast_{\Gor} G$ if and only if $\codeg(F) + \codeg(F^\ast) = \codeg(P) =: r$, and in this case by \cite[Theorem~4.12]{NS13} $\codeg(G) = \codeg(P) - \codeg(F) = \codeg(F^\ast)$. Conversely, assume $\codeg(G) = \codeg(P) - \codeg(F)$. By Theorem~\ref{thm:NS}, the inequality $\codeg(F) + \codeg(F^\ast) \geq r$ always holds, so that by Theorem~\ref{thm:NS} again we only need to prove $\codeg(F^\ast) \leq \codeg(G)$. As $P$ is the Cayley join of $F$ and $G$, Proposition~\ref{prop:NS-prop} yields that $\codeg(F^\ast) = \codeg((F^\ast)^\times) \leq \codeg(G)$ as the codegree can only decrease under refinements of the lattice.
\end{proof}

\begin{rmk}As we will need it for the upcoming proofs, let us recall how to characterize Gorenstein polytope via cones. For more details, we refer to \cite{batyrev2008combinatorial}. The {\em cone over $P$} is denoted by $C_P \subseteq \R^{d+1}$ spanned by $P \times \{1\} \subset \R^{d+1}$. Any polyhedral cone in $\R^{d+1}$ that is unimodularly equivalent to some $C_P$ is called a {\em Gorenstein cone}. Now, $P$ is a Gorenstein polytope if and only if the dual cone $C_P^\vee = \{y \in (\R^{d+1})^*: \langle y, x \rangle \ge 0 \;\forall\, x \in C_P\}$ is a Gorenstein cone. In this case, $C_P^\vee$ is unimodularly equivalent to the cone over $P^\times$.
\end{rmk}

The following proposition contains a positive result regarding associativity of Gorenstein joins. In general, however, we do not expect associativity to hold.

\begin{prop}
	Let $P \subseteq \R^d$ be a $d$-dimensional Gorenstein polytope with faces $F, G, H \leq P$ such that $P = (F \ast_{\Gor} G) \ast_{\Gor} H$. If $F$ is a vertex or $H$ is a vertex, then $P = F \ast_{\Gor} (G \ast_{\Gor} H)$. In particular, Gorenstein joins are associative for $\dim(P) \leq 3$.
\end{prop}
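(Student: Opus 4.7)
The plan is to exploit that combinatorial joins associate (Remark~\ref{remark:assoc}): $G \circ H$ is a face of $P$ with $P = F \circ (G \circ H)$. Moreover, restricting the Cayley projection $\pi_P: P \to \Delta_1$ arising from $P = K \ast_{\Gor} H$ (with $K := F \ast_{\Gor} G$) to the affine hull of $G \circ H$ immediately exhibits $G \circ H = G \circ_{\Cay} H$ as a Cayley join, since $G \subseteq K$ maps to $0$ while $H$ maps to $1$. By Lemma~\ref{lemma:GorensteinCodegree}, the task thus reduces to the two degree identities $\deg(G \circ H) = \deg G + \deg H$ and $\deg P = \deg F + \deg(G \circ H)$.

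If $H$ is a vertex, then $P$ is a lattice pyramid over $K$ with apex $H$, and the same restriction also exhibits $G \circ H$ as a lattice pyramid over $G$ with apex $H$. Since lattice pyramids preserve the $h^\ast$-polynomial (hence the degree and the Gorenstein property), $\deg(G \circ H) = \deg G = \deg G + \deg H$. For the outer join, I would extend the Cayley projection $\pi_K$ of $K = F \circ_{\Cay} G$ to an affine map on the affine hull of $P$ by declaring that $H$ maps to $1$; the lattice-pyramid structure of $P$ over $K$ with apex $H$ makes this extension a primitive integer affine map with $F \mapsto 0$ and $G \cup H \mapsto 1$, so $P = F \circ_{\Cay} (G \circ H)$. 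Combined with $\deg P = \deg K + \deg H = \deg F + \deg G = \deg F + \deg(G \circ H)$, Lemma~\ref{lemma:GorensteinCodegree} closes this case.

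If $F$ is a vertex, then $P$ is a combinatorial pyramid over $G \circ H$ with apex $F$, so Theorem~\ref{thm:NS} applied to $F$ reduces the problem to proving $\deg F^\ast = \deg G + \deg H$, where $F^\ast$ denotes the dual face of $F$ in $P^\times$. Once this is known, Theorem~\ref{thm:NS} yields a Gorenstein join $P = F \ast_{\Gor} B$ whose second factor must be $G \circ H$ (the unique facet of $P$ opposite the apex $F$), and then $\deg(G \circ H) = \deg P = \deg G + \deg H$ gives $G \ast_{\Gor} H$ via Lemma~\ref{lemma:GorensteinCodegree}. To control $\deg F^\ast$, I would first apply Theorem~\ref{thm:NS} to the face $\conv(F, H)$—itself a lattice pyramid over $H$ with apex $F$, hence Gorenstein of degree $\deg H$—to get $\deg E \leq \deg G$, where $E$ is the dual face of $\conv(F, H)$ in $P^\times$. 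Proposition~\ref{prop:NS-prop} applied to $P = K \ast_{\Gor} H$ identifies the Gorenstein dual $(H^\ast)^\times$ with $K$ up to lattice refinement, and under this identification $E$ matches the facet of $K^\times$ dual to the vertex $F$ of $K$; that facet has degree $\deg G$ by Proposition~\ref{prop:NS-prop} applied to $K = F \ast_{\Gor} G$. Since lattice refinement cannot decrease the degree, one deduces $\deg E = \deg G$; by the symmetric argument $\deg K^\ast = \deg H$. The main obstacle is then to upgrade the Cayley decomposition $F^\ast = K^\ast \circ_{\Cay} E$—obtained by intersecting $F^\ast$ with $P^\times = K^\ast \circ_{\Cay} H^\ast$—to a Gorenstein join, equivalently to verify via Lemma~\ref{lemma:GorensteinCodegree} that the codegrees add, which requires tracking how the refinements from Proposition~\ref{prop:NS-prop} cohere on $F^\ast$.

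For the associativity claim in dimension $\leq 3$, the identity $\dim F + \dim G + \dim H = \dim P - 2 \leq 1$ forces at least two of $F$, $G$, $H$ to be vertices; in particular one of $F$ or $H$ is a vertex, so associativity follows from the above cases.
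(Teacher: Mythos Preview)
Your treatment of the case where $H$ is a vertex is correct and essentially coincides with the paper's argument: both extend the Cayley projection of $K=F\ast_{\Gor}G$ to all of the lattice pyramid $P$ by sending the apex $H$ to $1$, and then invoke Lemma~\ref{lemma:GorensteinCodegree} together with the codegree identity $\codeg(P)=\codeg(F)+\codeg(G)+\codeg(H)$ obtained from the two given Gorenstein joins.

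The case where $F$ is a vertex, however, is not finished. You reduce via Theorem~\ref{thm:NS} to $\deg F^\ast=\deg G+\deg H$, then try to compute $\deg F^\ast$ by passing through the identification of $(H^\ast)^\times$ with a refinement of $K$ coming from Proposition~\ref{prop:NS-prop}. As you yourself note, this leaves the obstacle of upgrading the Cayley decomposition $F^\ast=K^\ast\circ_{\Cay}E$ to a Gorenstein join, which amounts to controlling how two different lattice refinements interact on $F^\ast$. You have not done this, and it is not clear that this route can be made to work without substantial additional input.

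The paper avoids the dual picture entirely and argues much more directly. Since $F$ is a vertex, the only thing to check is that $F$ lies at lattice distance~$1$ from the hyperplane $\mathrm{aff}(G\circ H)$; once this is known, $P$ is a lattice pyramid over $G\circ H$, hence $P=F\circ_{\Cay}(G\circ H)$, and the already established identity $\codeg(P)=\codeg(F)+\codeg(G)+\codeg(H)$ together with the chain $\codeg(P)\le\codeg(F)+\codeg(G\circ H)\le\codeg(F)+\codeg(G)+\codeg(H)$ forces all equalities, so Lemma~\ref{lemma:GorensteinCodegree} finishes. For the distance computation the paper works in the Gorenstein cone $C_P$: the unique interior lattice point $p$ on height $r=\codeg(P)$ decomposes as $p=f+g+h$ with $f,g,h$ the interior points of $C_F,C_G,C_H$ on the appropriate heights (this uses that $\conv(F,G)$ is Gorenstein of codegree $\codeg(F)+\codeg(G)$). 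If $u$ is the primitive inner normal of the facet $\conv(G,H)$, then $\langle u,p\rangle=1$ since $u$ is a vertex of $P^\times$, while $\langle u,g+h\rangle=0$; hence $\langle u,f\rangle=1$, which is exactly the desired lattice distance. This cone argument is the missing idea in your approach.
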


\begin{proof}
	 By Lemma~\ref{lemma:GorensteinCodegree}, the faces $\conv(F,G)$ and $H$ of $P$ are themselves Gorenstein polytopes with $r := \codeg(P) = \codeg(\conv(F,G)) + \codeg(H)$. Applying the same result to the Gorenstein polytope $\conv(F,G) = F \ast_{\Gor} G$, we obtain that $F$ and $G$ are Gorenstein polytopes with $\codeg(\conv(F,G)) = \codeg(F) + \codeg(G)$. Hence, $r = \codeg(F) + \codeg(G) + \codeg(H)$. By Lemma~\ref{lemma:GorensteinCodegree}, it hence suffices to show $P = F \ast_{\Cay} (G \ast_{\Cay} H)$. That the join of $G$ and $H$ is a Cayley join is immediate from the assumption, so it is enough to show that the join of $F$ and $\conv(G,H)$ is a Cayley join.\medskip
	
	Let first $F$ be a vertex. Hence, $P$ is a pyramid with vertex $F$ and base $\conv(G,H)$. Consider the cone $C_P \subset \R^{d+1}$. As $P$ is Gorenstein of codegree $r$, there is a unique interior lattice point $p \in C_P$ on height $p_{d+1} = r$. Similarly, let $f$, $g$ and $h$ be the unique interior lattice points of $C_F, C_G, C_H \subseteq C_P$ on heights $\codeg(F), \codeg(G), \codeg(H)$, respectively. Then necessarily $f+g \in C_{\conv(F,G)} \subseteq C_P$ is the unique interior lattice point on height $\codeg(\conv(F,G))$ of the Gorenstein polytope $\conv(F,G)$. Hence, $p = f + g + h$. Let $u \in (\Z^{d+1})^*$ be the primitive inner facet normal of the hyperplane containing $\conv(G,H)$. As $u$ is a vertex of $P^\times$, we have $\langle u, p \rangle = 1$ (cf. Definition~\ref{def:dual-def}). Therefore,
	\begin{equation*}
	1 = \langle u, p \rangle = \langle u, f \rangle + \langle u, g+h \rangle = \langle u, f \rangle.
	\end{equation*}
	But this means that $F$ and $\conv(G,H)$ have lattice distance equal to $1$, i.e., the combinatorial join $F \circ \conv(G,H)$ is a Cayley join.\medskip

	Let now $H$ be a vertex, so $P$ is a lattice pyramid with vertex $H$ and base $\conv(F,G)$. As $F \circ_{\Cay} G$, we may assume that ${\rm lin}(F,G)=\R^{d-1} \times\{0\}$, $H = \{e_d\}$, and there exists some $u \in (\Z^{d-1} \times \{0\})^*$ such that $\langle u, F \rangle = 0$ and $\langle u, G \rangle = 1$. Now, $\langle u + e^*_d, F \rangle = 0$ and 
	$\langle u + e^*_d, \conv(G,H) \rangle = 1$. In particular, the join of $F$ and $\conv(G,H)$ is a Cayley join.
	
	\smallskip
	Finally, if $d = \dim(P) \leq 3$ and $P$ is the join of $F$, $G$ and $H$, then necessarily at least one of $F$ and $H$ is a vertex for dimension reasons, concluding the proof.
\end{proof}

We observe that a Gorenstein polytope $P$ is a lattice pyramid over a face $F$ with apex a vertex $v$ of $P$ if and only if $P$ is a Gorenstein join of $F$ and $v$. Hence, the previous result has the following consequence.

\begin{cor}
If $P$ is a Gorenstein join of two faces with one face a lattice pyramid, then $P$ is also a lattice pyramid.
\label{cor:pyr}
\end{cor}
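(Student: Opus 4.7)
The plan is to reduce this corollary to the preceding proposition about associativity of Gorenstein joins when a vertex is one of the three factors, combined with the observation stated immediately above the corollary identifying Gorenstein lattice pyramids with Gorenstein joins in which one factor is a vertex.

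Concretely, I would write the hypothesis as $P = A \circ_{\Gor} B$ with $A$ a lattice pyramid. By the observation preceding the corollary, being a lattice pyramid means exactly that $A$ itself admits a Gorenstein join decomposition $A = v \circ_{\Gor} A'$, where $v$ is the apex (a vertex of $A$) and $A'$ is the base. Substituting into the decomposition of $P$ gives
\[ P = (v \circ_{\Gor} A') \circ_{\Gor} B. \]
Now I would apply the preceding proposition with $F = v$, $G = A'$, $H = B$. Since the first factor $F = v$ is a vertex, the hypothesis of that proposition is satisfied, and we conclude
\[ P = v \circ_{\Gor} (A' \circ_{\Gor} B). \]
Invoking the observation a second time, in the opposite direction, this equality says precisely that $P$ is a lattice pyramid with apex $v$ and base $A' \circ_{\Gor} B$, which is what we wanted.

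The only subtle point, and really the main thing to get right rather than a genuine obstacle, is arranging the three factors so that the vertex occupies an outer slot of the nested join: this is the shape of the hypothesis for which the preceding associativity result applies. If one were tempted to write the pyramid decomposition as $A = A' \circ_{\Gor} v$, the vertex would sit in the \emph{middle} of the resulting threefold join, a situation the proposition explicitly does not cover in general. Writing it instead as $v \circ_{\Gor} A'$ — which is legitimate because the Gorenstein join is symmetric in its two factors (both the combinatorial join and the Cayley condition are symmetric, and dualization respects the swap) — places $v$ in the correct position and the argument goes through without further work. The case where $B$ rather than $A$ is the lattice pyramid is symmetric, or equivalently handled by swapping the roles of $A$ and $B$ at the start.
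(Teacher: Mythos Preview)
Your proposal is correct and follows exactly the approach the paper intends: the corollary is stated immediately after the associativity proposition, preceded by the observation that a Gorenstein polytope is a lattice pyramid if and only if it is a Gorenstein join with a vertex, and the paper simply says ``Hence, the previous result has the following consequence.'' You have spelled out precisely this deduction. One small point you leave implicit is that the face $A$ is itself Gorenstein (needed to invoke the observation for $A$), but this follows from Theorem~\ref{thm:NS}, which guarantees that factors of a Gorenstein join are Gorenstein.
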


This result is already contained in the master's thesis \cite{masterthesis}. Let us give an example that shows that it fails for Cayley joins.

\begin{ex}
Consider $F := \conv(e_1,e_2) \times \{0\}$ and $G := \conv(0,-e_1-e_2) \times \{1\}$ in $\R^3$. Then its convex hull $P$ is a tetrahedron that is a Gorenstein polytope of lattice volume~$2$ (with $h^*_P(t)=1+t^2$ and $\ell^*_P(t)=t^2$). It is a Cayley join $P = F \circ_{\Cay} G$ but not a Gorenstein join as $\deg(P)=2\not=0=\deg(F)+\deg(G)$. Note that $F$ and $G$ are lattice pyramids, but $P$ is not. In particular, this example shows that the Cayley join property is not associative, and moreover, a Cayley join does not have to be thin if a factor of the Cayley join is thin.\label{ex:tetra}
\end{ex}

\subsection{Local $h^*$-polynomials of joins}

Recall that $h^\ast$- and $\ell^\ast$-polynomials are multiplicative with respect to free joins (Proposition~\ref{prop:mult}). For general joins, one still gets inequalities. 

\begin{lem}\label{lemma:inequalities}
    Let $P$ be a lattice polytope which is the join of two faces $F$ and $G$. Then
    \begin{equation*}
        \ell^\ast_F(t) \cdot \ell^\ast_G(t) \leq \ell^\ast_P(t) \ \text{ and } \ h^\ast_F(t) \cdot h^\ast_G(t) \leq h^\ast_P(t).
    \end{equation*}
    If moreover $P$ is a Gorenstein polytope which is the Gorenstein join of $F$ and $G$, then also
    \begin{equation*}
        \ell^\ast_{P^\times}(t) \leq \ell^\ast_{F^\times}(t) \cdot \ell^\ast_{G^\times}(t) \ \text{ and } \ h^\ast_{P^\times}(t) \leq h^\ast_{F^\times}(t) \cdot h^\ast_{G^\times}(t).
    \end{equation*}
\end{lem}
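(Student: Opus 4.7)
My plan for the first pair of inequalities is to exhibit $P$ as a lattice refinement of the free join $F \circ_\Z G$ and then invoke Corollary~\ref{cor:lattice} combined with the multiplicativity of $h^\ast$ and $l^\ast$ under free joins (Proposition~\ref{prop:mult}). Since $P = F \circ G$, the polytope $P$ is affinely isomorphic to the free join $F \circ_\Z G$ via an isomorphism sending $F$ to $F$ and $G$ to $G$. Under this identification, the ambient lattice of $P$ pulls back to a lattice in the affine hull of $F \circ_\Z G$ which I claim contains the natural free-join lattice $M_F \oplus M_G \oplus \Z$: it certainly contains the translation lattices of $F$ and of $G$ (since these are sublattice polytopes of $P$), while the difference between any lattice vertex of $F$ and any lattice vertex of $G$ projects to $1$ in the ``join direction'' and thereby supplies the missing generator. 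With this refinement in place, Corollary~\ref{cor:lattice} and Proposition~\ref{prop:mult} yield $h^\ast_F(t) \cdot h^\ast_G(t) = h^\ast_{F \circ_\Z G}(t) \le h^\ast_P(t)$ and likewise for $l^\ast$.

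For the second part, my plan is to dualise the above refinement via the cone description of Gorenstein polytopes. Under the Gorenstein join assumption, Theorem~\ref{thm:NS} guarantees that $F$ and $G$ are themselves Gorenstein, so $F^\times$ and $G^\times$ are defined. A direct coordinate computation will show that the cone over the free join decomposes as the product $C_{F \circ_\Z G} = C_F \times C_G$ with the natural product lattice, and dualising gives $C_{F \circ_\Z G}^\vee = C_F^\vee \times C_G^\vee$, which I then identify as the cone over $F^\times \circ_\Z G^\times$, yielding the free-join identity $(F \circ_\Z G)^\times = F^\times \circ_\Z G^\times$. Combining this with the lattice refinement from the first part, the dual lattice on $C_P^\vee$ is coarser than the free-join dual lattice on $C_{F^\times \circ_\Z G^\times}$. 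Moreover, by Theorem~\ref{thm:NS} the Gorenstein join hypothesis forces $\codeg(P) = \codeg(F) + \codeg(G) = \codeg(F \circ_\Z G)$, so the interior lattice point $w$ of $\codeg(P) \cdot P$ coincides with the one of $\codeg(P) \cdot (F \circ_\Z G)$, and the affine slice defining $P^\times$ agrees with the one defining $F^\times \circ_\Z G^\times$. Hence $P^\times$ is a lattice coarsening of $F^\times \circ_\Z G^\times$, and Corollary~\ref{cor:lattice} together with Proposition~\ref{prop:mult} then yields $h^\ast_{P^\times}(t) \le h^\ast_{F^\times}(t) \cdot h^\ast_{G^\times}(t)$ and the analogue for $l^\ast$.

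The main obstacle I anticipate is the careful bookkeeping needed to verify that Gorenstein duality commutes with the free join at the level of lattice polytopes (not merely at the level of cones), and to track that the interior lattice point $w$ and the affine slice defining the dual are preserved when passing between $P$ and $F \circ_\Z G$. This is precisely where the full strength of being a Gorenstein join (rather than merely a Gorenstein polytope happening to be a combinatorial join) enters, via the codegree equality $\codeg(P) = \codeg(F) + \codeg(G)$ from Theorem~\ref{thm:NS}.
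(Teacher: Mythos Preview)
Your proposal is correct and follows essentially the same route as the paper's proof: for the first part you pass to the coarser free-join lattice and invoke Corollary~\ref{cor:lattice} with Proposition~\ref{prop:mult}, and for the second part you use the codegree equality to pin down the interior lattice point, dualise the lattice inclusion, and identify the dual of the free join with the free join of the duals. The only minor imprecision is that the equality $\codeg(P)=\codeg(F)+\codeg(G)$ does not follow directly from Theorem~\ref{thm:NS} (which gives $\codeg(P)=\codeg(F)+\codeg(F^\ast)$) but rather from Lemma~\ref{lemma:GorensteinCodegree}; the paper cites both, and also invokes Proposition~\ref{prop:NS-prop} for the free-join duality instead of redoing the cone computation you sketch.
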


\begin{proof}
We will use the notation of \cite{NS13}.Let $\overline{M} = \Z^{d+1}$, and $M(F)$ denote the sublattice of $\overline{M}$ spanned by the lattice points in the linear hull of $F \times \{1\}$. Relative to the sublattice $M(F) \oplus_\Z M(G)$ of $\overline{M}$, $P$ becomes the free join of $F$ and $G$. Recall that by Corollary~\ref{cor:lattice} both the $h^\ast$-polynomial and the $\ell^\ast$-polynomial are (weakly) monotonically increasing under refinements of the lattice. It hence follows from Proposition~\ref{prop:mult} that $\ell^\ast_F(t) \cdot \ell^\ast_G(t) \leq \ell^\ast_P(t)$ and $h^\ast_F(t) \cdot h^\ast_G(t) \leq h^\ast_{P}(t)$.\medskip

For the second claim, assume that $P \subseteq \R^d$ is a full-dimensional Gorenstein polytope of codegree $r$ with respect to the lattice $M = \Z^d \subseteq \R^d$. By assumption, $P$ is the Gorenstein join of two faces $F$ and $G$. By Theorem~\ref{thm:NS} and Lemma~\ref{lemma:GorensteinCodegree}, $F$ and $G$ are Gorenstein polytopes and $\codeg(F) + \codeg(G) = r$. For $\overline{M} = \Z^{d+1}$ we define the dual lattice $\overline{N} \coloneqq \mathrm{Hom}_\Z(\overline{M},\Z) \subseteq (\R^{d+1})^*$. By definition, as $P$ is a Gorenstein polytope, $C_P^\vee$ is a Gorenstein cone with respect to $\overline{N}$. Let $n = e_{d+1}^\ast \in \overline{N}$ be the unique interior lattice point of $C_P^\vee$ with $P \times \{1\} = C_P \cap \{x \in \R^{d+1}: \langle n, x \rangle = 1\}$. In the same way, we denote by $m \in \overline{M}$ the unique interior lattice point of $C_P$ such that $P^\times = C_P^\vee \cap \{y \in (\R^{d+1})^*: \langle y, m \rangle = 1\}$. Recall that $\langle n, m \rangle = r$ and hence $m = (p,r) \in M \oplus \mathbb{Z} = \overline{M}$ with $p \in M$ the unique interior lattice point of the $r$-th dilate $rP$ of $P$. 

Now, let us consider the sublattice $M(F) \oplus M(G) \subseteq \overline{M}$. With respect to this coarser lattice, the polytope $P$ is the \emph{free} join of $F$ and $G$, and this is clearly a Gorenstein polytope of codegree $\codeg(F) + \codeg(G) = r$. Hence, the $r$-th dilate $rP$ contains a unique interior lattice point in the original as well as in the coarser lattice. These two points must therefore agree, so the unique interior lattice point $m = (p,r) \in (rP) \times \{r\} \subseteq C_P$ with respect to the original lattice actually lies in $M(F) \oplus M(G)$. Let now $\Tilde{N} \subset (\R^{d+1})^*$ be the dual lattice of $M(F) \oplus M(G)$. Hence, $P^\times$ is with respect to the finer lattice $\Tilde{N}$ the dual Gorenstein polytope of the Gorenstein polytope $P$ considered with respect to the coarser lattice $M(F) \oplus M(G)$. By Proposition~\ref{prop:NS-prop}, the Gorenstein dual of the free join of the Gorenstein polytopes $F$ and $G$, is the free join of the Gorenstein duals $F^\times$ and $G^\times$. Again, monotonicity and multiplicativity proves the second claim: $\ell^\ast_{P^\times}(t) \leq \ell^\ast_{F^\times}(t) \cdot \ell^\ast_{G^\times}(t)$ and $h^\ast_{P^\times}(t) \leq h^\ast_{F^\times}(t) \cdot h^\ast_{G^\times}(t)$, where $P^\times$ is considered with respect to the original lattice $\overline{N}$ again.
\end{proof}

\begin{rmk}
    It follows in the situation of the second part of Lemma~\ref{lemma:inequalities} from Proposition~\ref{prop:NS-prop} and Remark~\ref{rmk:refine} that $\ell^\ast_{F}(t) \leq \ell^\ast_{(G^\ast)^\times}(t)$ and $\ell^\ast_G(t) \leq \ell^\ast_{(F^\ast)^\times}(t)$, because $(G^\ast)^\times$ is just the polytope $F$ with a possibly finer lattice, and analogously for $(F^\ast)^\times$ and $G$. The same holds for the $h^\ast$-polynomial.
\end{rmk}

\section{Characterization of thin Gorenstein polytopes}

\subsection{The main result}

\label{sec:gorst}

The following notion will occur naturally in the proof of Theorem~\ref{thm:main}.

\begin{defn}
A lattice polytope $P$ is called {\em $g$-thin} if $\deg(g_P) = \deg(P)$. 
\end{defn}

For instance, any unimodular simplex is $g$-thin. Note that by Corollary~\ref{cor:schepers}, we always have $\deg(g_P) \le \deg(P)$. As by construction (Definition~\ref{def:fgh}) $\deg(g_P) \le \dim(P)/2$, we observe that
\[g\text{-thin} \quad\lora\quad \text{trivially thin}\]

\begin{ex}
Let $P$ denote the lattice pyramid over $[-1,1]$. Then $P$ has dimension $2$, degree $1$ and $\deg(g_P)=0$ as it is a simplex. This is an example of a trivially thin Gorenstein polytope that is not $g$-thin. For another example, consider $2 \Delta_2$ which is a trivially thin (non-Gorenstein) simplex that is not $g$-thin. This example shows that a spanning thin polytope that is not a free join does not have to be $g$-thin (i.e., in Question~\ref{question} 'trivially thin' cannot be strengthened by 'g-thin').
\end{ex}

Here is our main result. 

\begin{thm}\label{thm:main}
    Let $P$ be a Gorenstein polytope. Then the following are equivalent:
    \begin{enumerate}[label=(\roman*)]
        \item $P$ is thin,\label{item:thin}
        \item $P$ is trivially thin or $P = F \ast_{\Gor} G$ with at least one factor trivially thin,\label{item:trivially_thin}
        \item $P$ is $g$-thin or $P = F \ast_{\Gor} G$ with $\deg(\ell^*_F) = \deg(F)$ and $G$ $g$-thin.\label{item:g-thin}
    \end{enumerate}
    Moreover, if $P$ is not thin, then $\deg(\ell_P^\ast) = \deg(P)$.
\end{thm}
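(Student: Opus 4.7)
The plan is to prove the equivalence (i) $\Leftrightarrow$ (ii) $\Leftrightarrow$ (iii) by establishing (iii) $\Rightarrow$ (ii), isolating a key lemma (a Gorenstein join with a thin factor is itself thin) that yields both (ii) $\Rightarrow$ (i) and (iii) $\Rightarrow$ (i), and proving (i) $\Rightarrow$ (iii) via a top-coefficient analysis of the decomposition $h^\ast_P(t) = \sum_{F \leq P} l^\ast_F(t) g_{[F, P)}(t)$ from Proposition~\ref{prop:schepers}. The Moreover clause will fall out as a contrapositive of the same analysis.

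The step (iii) $\Rightarrow$ (ii) is immediate, since $\deg(g_Q) \leq \dim(Q)/2$ forces a $g$-thin polytope to be trivially thin; applied to $P$ or to the factor $G$ this converts (iii) directly into (ii). The key lemma asserts: if $P = F \ast_{\Gor} G$ is a Gorenstein join with $G$ thin, then $P$ is thin. Granting it, (ii) $\Rightarrow$ (i) follows because in (ii) the trivially thin factor is thin, and (iii) $\Rightarrow$ (i) follows because in (iii) the $g$-thin factor $G$ is trivially thin and hence thin. For the key lemma itself I plan to realize $P$ with respect to the coarser sublattice $M(F) \oplus M(G) \subseteq \overline{M}$ as the free join $F \circ_\Z G$, for which Proposition~\ref{prop:mult} gives $l^\ast_{F \circ_\Z G} = l^\ast_F \cdot l^\ast_G = 0$. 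The delicate point is then to show that the Gorenstein-specific lattice refinement from $M(F) \oplus M(G)$ back to the original lattice $\overline{M}$ of $P$ (intrinsic to a Gorenstein but not free join, by Proposition~\ref{prop:NS-prop}) does not introduce new interior lattice points contributing to $l^\ast_P$; this compatibility is presumably the content of the Schepers lemma referenced in the acknowledgment, and should follow from a careful analysis of the Gorenstein cone together with the monotonicity from Corollary~\ref{cor:lattice} and the upper bound $l^\ast_{P^\times} \leq l^\ast_{F^\times} l^\ast_{G^\times}$ of Lemma~\ref{lemma:inequalities}(2).

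The main content is (i) $\Rightarrow$ (iii). Suppose $P$ is thin; if $P$ is $g$-thin we are done, so assume $\deg(g_P) < \deg(P)$. In the decomposition of Proposition~\ref{prop:schepers}, the $F = P$ term vanishes (since $l^\ast_P = 0$) and the $F = \emptyset$ term is $g_P$ of degree strictly less than $\deg(P)$. By Gorenstein palindromicity, $h^\ast_{P, \deg(P)} = 1$, so the contributions $c_F \coloneqq (l^\ast_F g_{[F, P)})_{\deg(P)}$ from proper nonempty faces must sum to $1$. Using $\deg(l^\ast_F) \leq \deg(F)$, $\deg(g_{[F, P)}) \leq \dim(F^\ast)/2$, and $\dim(P) = \dim(F) + \dim(F^\ast) + 1$, the condition $c_F > 0$ forces $2(\codeg(P) - \codeg(F)) \geq \dim(F^\ast) + 2$, hence $\codeg(F^\ast) \geq (\dim(F^\ast) + 2)/2$, so in particular $F^\ast$ is trivially thin. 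Combined with Theorem~\ref{thm:NS} and the extremality of the unique proper nonempty face $F$ with $c_F = 1$ (forced by $\sum c_F = 1$ and nonnegativity), one obtains the codegree equality $\codeg(F) + \codeg(F^\ast) = \codeg(P)$, exhibiting $P = F \ast_{\Gor} G$ as a Gorenstein join with $G \cong F^\ast$ (modulo lattice refinement). Tracking equalities in the degree bounds yields $\deg(l^\ast_F) = \deg(F)$, and Kalai--Braden duality (Theorem~\ref{thm:Kalai}) applied to the interval $[F, P)$, combinatorially isomorphic to $[\emptyset, F^\ast)^\ast$, converts $\deg(g_{[F, P)}) = \dim(F^\ast)/2$ into $\deg(g_G) = \deg(G)$, showing that $G$ is $g$-thin.

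The Moreover clause follows by contrapositive of the same analysis. Assume $P$ is not thin. Then $P$ is not $g$-thin and not trivially thin ($g$-thin and trivially thin each imply thin), so $(g_P)_{\deg(P)} = 0$ as in the previous paragraph; and by the established (iii) $\Rightarrow$ (i), $P$ does not satisfy (iii). Crucially, the top-coefficient analysis above requires only $l^\ast_{P, \deg(P)} = 0$ (together with $(g_P)_{\deg(P)} = 0$) rather than the stronger $l^\ast_P = 0$; hence if $l^\ast_{P, \deg(P)}$ were $0$, the argument would produce a face $F$ giving a Gorenstein join decomposition of $P$ of the (iii) form, contradicting the assumption. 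Therefore $l^\ast_{P, \deg(P)} \geq 1$, and since $l^\ast_P \leq h^\ast_P$ coefficientwise forces $l^\ast_{P, \deg(P)} \leq h^\ast_{P, \deg(P)} = 1$, we pin down $l^\ast_{P, \deg(P)} = 1$, giving $\deg(l^\ast_P) = \deg(P)$. The principal obstacles I foresee are the proof of the key lemma, where the vanishing of $l^\ast_P$ must be reconciled with the Gorenstein lattice refinement, and the codegree arithmetic that converts $c_F > 0$ into both the Gorenstein join identity and the extra (iii) conditions; both steps rely on Theorem~\ref{thm:NS}, Theorem~\ref{thm:Kalai}, and the Katz--Stapledon nonnegativity of Theorem~\ref{thm:local_h}(1) to guarantee that the inequalities in the degree bounds tighten to the required equalities.
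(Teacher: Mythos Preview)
Your overall architecture matches the paper's, but there are two genuine gaps.

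\textbf{In (i) $\Rightarrow$ (iii).} Your chain uses the crude bound $\deg(g_{[F,P)}) \le \dim(F^*)/2$, which only yields $\deg(F^*) \le \dim(F^*)/2$ (so $F^*$ is trivially thin) but does \emph{not} force the Gorenstein join equality $\deg(F)+\deg(F^*)=\deg(P)$. Concretely, from
\[
\deg(P)=\deg(l^*_F)+\deg(g_{[F,P)}) \le \deg(F)+\tfrac{\dim(F^*)}{2}
\quad\text{and}\quad
\deg(F)+\deg(F^*)\le \deg(P)
\]
you only obtain $\deg(F^*)\le \dim(F^*)/2$; the slack between $\deg(F^*)$ and $\dim(F^*)/2$ leaves room for $\deg(F)+\deg(F^*)<\deg(P)$, and the uniqueness of the face with $c_F=1$ does not close it. The paper invokes Kalai's Theorem~\ref{thm:Kalai} \emph{at this step}: since $(F,P]^*$ is combinatorially the proper face poset of $F^*$, one has $\deg(g_{[F,P)})=\deg(g_{F^*})\le \deg(F^*)$, and then the sandwich
\[
\deg(F^*) \ge \deg(g_{F^*}) = \deg(P)-\deg(l^*_F) \ge \deg(P)-\deg(F) \ge \deg(F^*)
\]
forces all equalities simultaneously: $F^*$ is $g$-thin, $\deg(l^*_F)=\deg(F)$, and $\deg(F)+\deg(F^*)=\deg(P)$ (hence the Gorenstein join by Theorem~\ref{thm:NS}). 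Your later appeal to Kalai, starting from the unjustified premise $\deg(g_{[F,P)})=\dim(F^*)/2$, does not repair this. Also note that $G$ is combinatorially \emph{dual} to $F^*$, not isomorphic to it; the transfer of $g$-thinness from $F^*$ to the actual join factor $G$ goes through Lemma~\ref{lem:deg}.

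\textbf{In (ii) $\Rightarrow$ (i).} Your ``key lemma'' (Gorenstein join with a \emph{thin} factor is thin) is stronger than what is needed and your sketch does not prove it: passing to the coarser lattice $M(F)\oplus M(G)$ gives $l^*=0$ there, but monotonicity (Corollary~\ref{cor:lattice}) goes the wrong way when refining back to the original lattice. The inequality you cite from Lemma~\ref{lemma:inequalities} bounds $l^*_{P^\times}$, not $l^*_P$. The paper instead applies that lemma to the dual decomposition $P^\times=F^*\ast_{\Gor}G^*$, obtaining $l^*_P \le l^*_{(F^*)^\times}\cdot l^*_{(G^*)^\times}$, and then uses Lemma~\ref{lem:deg} to transport ``$F$ trivially thin'' to ``$(G^*)^\times$ trivially thin,'' killing the right-hand side. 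This genuinely requires \emph{trivially} thin (a condition on dimension and degree, stable under the identifications of Lemma~\ref{lem:deg}), not merely thin.

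Once these two points are fixed as above, your treatment of the Moreover clause (running the same argument under the weaker hypothesis $l^*_{P,\deg(P)}=0$) is exactly how the paper proceeds.
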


Let us remark that if $P$ is not thin, the last statement implies that $\ell^*_P(t)$ and $h^*_P(t)$ have the same leading coefficient $1$, as $h^*_P(t)$ is palindromic with constant coefficient $1$.

We have to leave it as an open question whether it is possible to strengthen in the previous result `Gorenstein join' to `free join'. Let us note the following situation in which being a Gorenstein join (or even just a spanning join of faces) automatically implies being a free join.

\begin{cor}
Let $P$ be a spanning Gorenstein polytope. Then $P$ is thin if and only if it is trivially thin or a free join with a trivially thin factor (necessarily also a spanning Gorenstein polytope).\label{cor:spanning}
\end{cor}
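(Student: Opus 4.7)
The plan is to deduce Corollary~\ref{cor:spanning} from Theorem~\ref{thm:main} by upgrading ``Gorenstein join'' to ``free join'' using the spanning hypothesis. The easy direction ($\Leftarrow$) is handled first: if $P$ is trivially thin then it is thin by the proposition on trivially thin polytopes; if $P = F \circ_\Z G$ is a free join with, say, $F$ trivially thin, then $l^*_F(t) = 0$, so Proposition~\ref{prop:mult} gives $l^*_P(t) = l^*_F(t)\cdot l^*_G(t) = 0$, i.e.\ $P$ is thin.

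For the main direction ($\Rightarrow$), assume $P$ is a spanning thin Gorenstein polytope. By Theorem~\ref{thm:main}(ii), either $P$ is trivially thin (and we are done) or $P = F \ast_{\Gor} G$ with at least one factor trivially thin. In the latter case, since by Definition~\ref{def:cayley-join} a Gorenstein join is in particular a Cayley join, there exists a lattice projection $\pi$ mapping $P$ onto $\Delta_1$ with $\pi(F) = 0$ and $\pi(G) = 1$. Every lattice point of $P$ projects to a lattice point of $\Delta_1$, i.e.\ to $0$ or $1$, so it lies in $F$ or in $G$. Combined with the spanning assumption on $P$, the observation recorded just before Question~\ref{question} (that a spanning join whose lattice points all lie in one of the two factors is automatically a free join) shows $P = F \circ_\Z G$.

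It remains to check that the trivially thin factor is itself a spanning Gorenstein polytope. That it is Gorenstein follows directly from Theorem~\ref{thm:NS} (factors of a Gorenstein join are Gorenstein). For the spanning property, I will use the coordinate description of a free join: placing $F$ at height $0$ in $\R^n \times \{0\} \times \{0\}$ and $G$ at height $1$ in $\{0\} \times \R^m \times \{1\}$, any lattice point $v \in \mathrm{aff}(F) \cap \Z^n$ lifts to a lattice point $(v,0,0) \in \mathrm{aff}(P)$. Writing it as an integer affine combination $\sum a_i (f_i,0,0) + \sum b_j (0,g_j,1)$ of lattice points of $P$ (using that $P$ is spanning), the last coordinate forces $\sum b_j = 0$ and hence $\sum a_i = 1$, so projecting onto $\R^n$ expresses $v = \sum a_i f_i$ as an integer affine combination of lattice points of $F$. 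Hence $F$ is spanning, and similarly $G$ is spanning.

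The only mildly delicate point is the passage from Cayley join to free join using the spanning hypothesis; everything else is a direct citation of Theorem~\ref{thm:main}, Proposition~\ref{prop:mult}, Theorem~\ref{thm:NS}, and the elementary observation on spanning joins. So the whole corollary reduces to unpacking these facts in the right order.
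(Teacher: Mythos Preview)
Your proof is correct and follows exactly the route the paper intends: the sentence preceding Corollary~\ref{cor:spanning} and the observation just before Question~\ref{question} signal that one should apply Theorem~\ref{thm:main}(ii) and then upgrade the Gorenstein (hence Cayley) join to a free join via the spanning hypothesis. The paper does not spell out a proof, so your write-up---including the verification that the factors inherit the spanning property and the citation of Theorem~\ref{thm:NS} for the Gorenstein property---is a faithful expansion of the implicit argument.
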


The proof of Theorem~\ref{thm:main} relies critically on the decomposition of the $h^*$-polynomial into $\ell^*$-polynomials and $g$-polynomials (Corollary~\ref{cor:schepers}), valid also for general lattice polytopes.

\begin{lem}
Let $P$ be a lattice polytope with $\deg(\ell^*_P) < \deg(P)$. Then $P$ is $g$-thin or there exists a non-empty, proper face $F$ of $P$ with 
$\deg(P)=\deg(\ell^*_F)+\deg(g_{[F,P)})$.
\label{lem:scheperscons}
\end{lem}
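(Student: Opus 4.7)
The approach is a direct application of the positive decomposition from Proposition~\ref{prop:schepers}:
\[
h^*_P(t) \;=\; \sum_{\emptyset \le F \le P} l^*_F(t)\, g_{[F,P)}(t).
\]
Every summand is a product of two polynomials with nonnegative integer coefficients (the $l^*$-factors by Theorem~\ref{thm:lstar}(1), the $g$-factors by Theorem~\ref{thm:h-pol} applied to the face-interval $[F,P]$ in the face lattice of $P$). Since leading coefficients of nonnegative polynomials cannot cancel upon addition, the positive leading coefficient of $h^*_P$ must be contributed by some summand of matching degree. Hence I would extract a face $F$ with $l^*_F(t)$ and $g_{[F,P)}(t)$ both nonzero such that
\[
\deg(P) \;=\; \deg(h^*_P) \;=\; \deg(l^*_F) + \deg(g_{[F,P)}).
\]

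It then remains to identify which of the three types of faces this $F$ can be. For $F = P$ one has $g_{[P,P)}(t) = g_\emptyset(t) = 1$, so the identity degenerates to $\deg(P) = \deg(l^*_P)$, which is excluded by hypothesis. For $F = \emptyset$ one has $l^*_\emptyset(t) = 1$ and $g_{[\emptyset,P)}(t) = g_P(t)$, so the identity becomes $\deg(P) = \deg(g_P)$, that is, $P$ is $g$-thin. Otherwise, $\emptyset < F < P$ is a non-empty proper face that witnesses the alternative conclusion $\deg(P) = \deg(l^*_F) + \deg(g_{[F,P)})$.

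I do not anticipate any real obstacle here: the proof is essentially a one-line consequence of nonnegativity of the Schepers decomposition. The only point requiring care is bookkeeping of the conventions ($l^*_\emptyset = 1$, $g_{[P,P)} = g_\emptyset = 1$, and $\deg 0 = 0$), which is harmless since summands whose $l^*$- or $g$-factor vanishes contribute nothing to the right-hand side and therefore do not interfere with the max-of-degrees argument.
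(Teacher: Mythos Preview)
Your proof is correct and follows essentially the same approach as the paper's: both use the nonnegativity of the $l^*$- and $g$-polynomials in the decomposition of Proposition~\ref{prop:schepers} to extract a face $F$ with $\deg(P)=\deg(l^*_F)+\deg(g_{[F,P)})$, then rule out $F=P$ by hypothesis and interpret $F=\emptyset$ as the $g$-thin case. Your write-up is slightly more explicit about why nonnegativity forces some summand to attain the top degree and about the boundary conventions, but the argument is the same.
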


Here, we recall $\deg(\ell^*_{\emptyset})=\deg(1)=0$ and $\deg(g^*_{\emptyset})=\deg(1)=0$.

\begin{proof}
By the nonnegativity of $\ell^*$- and $g$-polynomials, Corollary~\ref{cor:schepers} implies that there exists a face $F$ of $P$ with $\deg(\ell^*_F)+\deg(g_{[F,P)}) = \deg(h^*_P)$. By our assumption, $F\not=P$. If $F=\emptyset$, then $\deg(h^*_P) = \deg(g_{[\emptyset,P)})$, so $P$ is $g$-thin.
\end{proof}

Let us note the following observation:

\begin{lem}
Let $P = F \ast_{\Gor} G$. Then the Gorenstein polytopes $F$, $G^*$, $F^\times$, $(G^*)^\times$ have the same degree, dimension, and degree of their $g$-polynomials. In particular, if any of these Gorenstein polytopes are trivially thin (respectively, $g$-thin), then all of them are.
\label{lem:deg}
\end{lem}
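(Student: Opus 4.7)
The plan is to reduce everything to three independent facts: combinatorial duality, preservation of $\dim$ and $\codeg$ under Gorenstein duality, and the codegree identity for Gorenstein joins. The argument is essentially bookkeeping once these are lined up.

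First, I would record the combinatorial relationships. By Proposition~\ref{prop:NS-prop}, since $P = F \circ_{\Gor} G$ is in particular a join, $F$ and $G^*$ are combinatorially dual; moreover $G^*$ is itself a Gorenstein polytope. By Proposition~\ref{prop:dual-def}, $F$ is combinatorially dual to $F^\times$ and $G^*$ is combinatorially dual to $(G^*)^\times$. Consequently all four polytopes are pairwise combinatorially dual or combinatorially equivalent, and in particular they share the same dimension.

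Second, I would handle the degrees. For any Gorenstein polytope $Q$ we have $\deg(Q) = \dim(Q) + 1 - \codeg(Q)$, and both $\dim$ and $\codeg$ are preserved under Gorenstein duality (Proposition~\ref{prop:dual-def}), so $\deg(F) = \deg(F^\times)$ and $\deg(G^*) = \deg((G^*)^\times)$. For the cross equality $\deg(F) = \deg(G^*)$, I invoke the identity established in the proof of Lemma~\ref{lemma:GorensteinCodegree}: $P = F \circ_{\Gor} G$ forces $\codeg(F) + \codeg(F^*) = \codeg(P)$ and $\codeg(G) = \codeg(F^*)$, and symmetrically $\codeg(G) + \codeg(G^*) = \codeg(P)$ with $\codeg(F) = \codeg(G^*)$. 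Combined with $\dim(F) = \dim(G^*)$, this yields $\deg(F) = \deg(G^*)$.

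Third, for the $g$-polynomial degrees I appeal to Theorem~\ref{thm:Kalai}: combinatorially dual polytopes have $g$-polynomials of the same degree. The three combinatorial dualities $F \leftrightarrow F^\times$, $F \leftrightarrow G^*$, and $G^* \leftrightarrow (G^*)^\times$ from the first step then force $\deg(g_F) = \deg(g_{G^*}) = \deg(g_{F^\times}) = \deg(g_{(G^*)^\times})$.

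Finally, the corollaries on trivial thinness ($\dim \geq 2\deg$) and $g$-thinness ($\deg(g) = \deg$) are formal consequences, since both conditions depend only on invariants proven equal among the four polytopes. There is no real obstacle here; the main care is just to cite the right form of Proposition~\ref{prop:NS-prop} for the combinatorial duality $F \leftrightarrow G^*$ and to extract the codegree identity $\codeg(F) = \codeg(G^*)$ cleanly from the Gorenstein-join hypothesis rather than from the Cayley-join hypothesis alone (the latter would only give an inequality via Theorem~\ref{thm:NS}).
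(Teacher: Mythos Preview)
Your proposal is correct and follows essentially the same route as the paper's own proof: combinatorial duality of $F$ and $G^*$ via Proposition~\ref{prop:NS-prop}, Kalai's theorem (Theorem~\ref{thm:Kalai}) for equality of $g$-polynomial degrees under combinatorial duality, invariance of dimension and degree under Gorenstein duality, and the additivity $\deg(F)+\deg(G)=\deg(P)=\deg(G)+\deg(G^*)$ (equivalently your codegree version) from Lemma~\ref{lemma:GorensteinCodegree} and Theorem~\ref{thm:NS} to get $\deg(F)=\deg(G^*)$. The only cosmetic difference is that the paper phrases the last step in terms of degrees rather than codegrees.
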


\begin{proof}
The Gorenstein property follows from Proposition~\ref{prop:NS-prop}. It is well-known, cf. \cite{batyrev2008combinatorial}, that duality of Gorenstein polytopes keeps dimension and degree invariant. It follows from Theorem~\ref{thm:Kalai} that this is also true for the degree of the $g$-polynomial. By Proposition~\ref{prop:NS-prop}, $F$ and $G^*$ are combinatorially dual to each other, hence, have the same dimension and by Theorem~\ref{thm:Kalai} the same degree of the $g$-polynomial. Finally, by Lemma~\ref{lemma:GorensteinCodegree} and Theorem~\ref{thm:NS},
\[\deg(P) - \deg(F) = \deg(G) = \deg(P) - \deg(G^*),\]
hence, $\deg(F)=\deg(G^*)$. 
\end{proof}

\begin{proof}[Proof of Theorem~\ref{thm:main}]
The implication \ref{item:g-thin} $\Rightarrow$ \ref{item:trivially_thin} is immediate.

\ref{item:trivially_thin} $\Rightarrow$ \ref{item:thin}: 
Let $P = F \ast_{\Gor} G$ with $F$ trivially thin. By Lemma~\ref{lem:deg}, it follows that $(G^\ast)^\times$ is trivially thin as well. Now, applying Lemma~\ref{lemma:inequalities} to the factorization $P^\times = F^\ast \ast_{\Gor} G^\ast$ yields $\ell^\ast_P(t) \leq \ell^\ast_{(F^\ast)^\times}(t) \cdot \ell^\ast_{(G^\ast)^\times}(t) = 0$, so $P$ is thin.

\ref{item:thin} $\Rightarrow$ \ref{item:g-thin}:
Let $P$ be a Gorenstein polytope. We assume only that $\deg(\ell_P^\ast) < \deg(P)$ and will deduce \ref{item:g-thin}, so that $P$ is in particular thin by the implications we already proved (and thus, if $P$ is not thin, then $\deg(\ell^\ast_P) = \deg(P)$). Let us assume that $P$ is not $g$-thin. Now, by Lemma~\ref{lem:scheperscons} there exists a non-empty, proper face $F$ of $P$ with $\deg(P)=\deg(\ell^*_F)+\deg(g_{[F,P)})$. Theorem~\ref{thm:Kalai} 
shows that $\deg(g_{[F,P)})=\deg(g_{(F, P]^*}) = \deg(g_{F^*})$. Thus, Corollary~\ref{cor:schepers} and Theorem~\ref{thm:NS} imply that
\[\deg(F^*) \ge \deg(g_{F^*}) = \deg(P) - \deg(\ell^*_F) \ge 
\deg(P) - \deg(F) \ge \deg(F^*).\]
Therefore, $F^*$ is $g$-thin, $\deg(\ell^*_F)=\deg(F)$, and $\deg(F)+\deg(F^*)=\deg(P)$, which implies \ref{item:g-thin} by Theorem~\ref{thm:NS} and Proposition~\ref{prop:NS-prop} (with the roles of $F$ and $G$ exchanged).
\end{proof}

\begin{cor}
Every thin Gorenstein polytope (of dimension $>0$) has lattice width $1$.\label{cor:flat-gorst}
\end{cor}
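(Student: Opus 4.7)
The plan is to apply Theorem~\ref{thm:main}(ii) to split a thin Gorenstein polytope $P$ of positive dimension into two structural cases: either $P$ is trivially thin, i.e., $\dim P \ge 2 \deg P$, or $P = F \ast_{\Gor} G$ is a Gorenstein join of two non-empty faces.

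In the Gorenstein join case, the conclusion is immediate from the definitions. By definition, every Gorenstein join is in particular a Cayley join (Definition~\ref{def:cayley-join}), and every Cayley join is a Cayley polytope (Definition~\ref{def:cayley}); this means there exists a lattice projection from $P$ onto $\Delta_1$, which is precisely the statement that $P$ has lattice width $1$.

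For the trivially thin case, the condition $\dim P \ge 2 \deg P$ is equivalent to $\codeg P > \tfrac{1}{2}\dim P$. Here I would invoke the Batyrev--Nill structure result that any Gorenstein polytope whose codegree is strictly greater than half its dimension is a Cayley polytope (in fact, a Cayley polytope over several factors), and hence has lattice width $1$.

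The main obstacle is this last step: it relies on the Batyrev--Nill Cayley-structure theorem for high-codegree Gorenstein polytopes, which is an input from outside the material developed in the excerpt. As an alternative route one could instead use Theorem~\ref{thm:main}(iii), reducing the non-join case to that of $g$-thin Gorenstein polytopes; but then one still has to argue directly that $g$-thin Gorenstein polytopes of positive dimension have lattice width $1$, which appears to need comparable toric-geometric input (for instance via the interpretation of $g_P$ through intersection cohomology of the associated toric variety).
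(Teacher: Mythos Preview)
Your proposal is correct and follows essentially the same route as the paper: apply Theorem~\ref{thm:main} to reduce to the two cases, handle the Gorenstein-join case directly via the chain Gorenstein join $\Rightarrow$ Cayley join $\Rightarrow$ Cayley polytope $\Rightarrow$ lattice width $1$, and for the trivially thin case invoke an external structure theorem guaranteeing that such Gorenstein polytopes are Cayley polytopes. The only discrepancy is bibliographic: the paper attributes the needed result (a trivially thin Gorenstein polytope of positive dimension is a Cayley polytope) to \cite[Theorem~3.1]{HNP09} rather than to Batyrev--Nill, and simply cites it rather than treating it as an obstacle.
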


\begin{proof}
As Gorenstein joins are Cayley joins, it remains by Theorem~\ref{thm:main} to show that a trivially thin Gorenstein polytope of dimension $>0$ is a Cayley polytope. This is precisely the statement of Theorem~3.1 in \cite{HNP09}.
\end{proof}

\begin{ex}
Let us illustrate Theorem~\ref{thm:main} by showing that all thin Gorenstein polytopes $P$ of dimension $d=3$ are lattice pyramids over Gorenstein polygons (without using Theorem~\ref{thm:3d} directly). Let us assume otherwise. If $P$ is trivially thin, then $\deg(P) \le 1$, so by Theorem~\ref{thm:BN} $P$ is a Lawrence prism. Palindromicity implies $h^*_P(t)=1 + t$, so lattice volume $2$, which is a contradiction because any three-dimensional Lawrence prism has at least lattice volume $3$. Hence, by Theorem~\ref{thm:main} $P$ must be a lattice pyramid or a Gorenstein join of two Gorenstein intervals one of them being thin. As a thin interval is a unimodular simplex, thus a lattice pyramid, also $P$ is a lattice pyramid by Corollary~\ref{cor:pyr}. 
\end{ex}

\subsection{Borisov's proof of the degree of $\ell^*$-polynomials of non-thin Gorenstein polytopes}

Theorem~\ref{thm:main} answers affirmatively Question~6.3(b) in \cite{NS13} asking whether for Gorenstein polytopes having a non-vanishing $\ell^*$-polynomial forces its degree to be maximal (i.e., equal to the degree of the $h^*$-polynomial). Lev Borisov has provided us with an alternative algebraic proof of this fact that we reproduce here. It uses the description of the local $h^*$-polynomial of a lattice polytope as a Hilbert series of a graded ideal given in \cite{BorisovMavlyutov}.

\begin{prop}Let $P \subseteq \R^d$ be a Gorenstein polytope of codegree $r$. Then either $P$ is thin or $\ell^\ast_P(t)$ starts with $t^r$. In this case, $\ell^\ast_{P}(t)$ has degree $\deg(P)$ and leading coefficient~$1$.\label{prop:borisov}
\end{prop}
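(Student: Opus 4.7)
The plan is to use the Hilbert-series interpretation of $l^*_P(t)$ due to Borisov and Mavlyutov \cite[Proposition~5.5]{BorisovMavlyutov}, combined with the classical fact that for a Gorenstein polytope the interior of the cone $C_P$ is a rank-one free module over the semigroup algebra.

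Concretely, let $R = \C[C_P \cap M]$ be the semigroup algebra of the cone $C_P \subseteq \R^{d+1}$ over $P$, graded by height, and let $I$ be the $R$-module spanned by lattice points in the interior $C_P^\circ$. Because $P$ is Gorenstein of codegree $r$, the interior $C_P^\circ$ contains no lattice points in heights $< r$, exactly one lattice point $p$ at height $r$, and in fact $C_P^\circ \cap M = p + (C_P \cap M)$. Hence $I$ is a free $R$-module of rank one, generated by $p$ in degree $r$. The Borisov-Mavlyutov description realises $l^*_P(t)$ as the Hilbert series of a graded quotient $V = I/J$, where $J$ is an $R$-submodule built from contributions of interior lattice points of cones over the proper faces of $P$.

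Given this setup, the argument is short. Since $V$ is a cyclic $R$-module generated by the image $\bar{p}$ of $p$ in degree $r$, there is a clean dichotomy: either $\bar{p} = 0$, in which case $V = 0$ and so $l^*_P \equiv 0$, i.e.\ $P$ is thin; or $\bar{p} \neq 0$, in which case $V_r = \C\cdot\bar{p}$ is one-dimensional (as $I_r$ is), so that $l^*_r = 1$ and $l^*_P(t)$ starts with $t^r$. In the second case, palindromicity $l^*_i = l^*_{d+1-i}$ from Theorem~\ref{thm:lstar}(2) combined with $l^*_i = 0$ for $i < r$ forces $l^*_j = 0$ for all $j > d+1-r = \deg(P)$, so $l^*_P(t)$ has degree exactly $\deg(P)$ and leading coefficient $l^*_{d+1-r} = l^*_r = 1$.

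The delicate point I would expect to cost the most effort is verifying that the Borisov-Mavlyutov quotient is genuinely an $R$-module quotient of the cyclic free module $I$, so that the generation-by-$\bar{p}$ argument applies. This amounts to recalling the precise ideal-theoretic definition of $\tilde{S}$ in \cite{BorisovMavlyutov} and checking that the relations defining $V$ are stable under multiplication by elements of $R$; once this is in place, the Gorenstein structure does the rest of the work essentially for free.
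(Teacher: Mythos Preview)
Your approach is essentially the same as the paper's: both exploit the Borisov--Mavlyutov realisation of $l^*_P(t)$ as the Hilbert series of a graded module which, for a Gorenstein polytope, is cyclic and generated in degree $r$ by the monomial corresponding to the unique interior lattice point $p$ at height $r$; the dichotomy ``$\bar p = 0$ or $\dim V_r = 1$'' and the reciprocity argument for the degree and leading coefficient are identical.

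The one imprecision worth flagging is your description of the submodule $J$. In \cite{BorisovMavlyutov} the relevant object $R_1(f,K)$ is not a quotient by ``contributions of interior lattice points of cones over proper faces''; rather, one takes a non-degenerate element $f \in \C[K]_1$, forms the ideal $I$ generated by its logarithmic derivatives, and defines $R_1(f,K)$ as the image of the interior ideal (your $I$, the paper's $J$) in the quotient ring $\C[K]/I$. This is automatically a cyclic $\C[K]/I$-module generated by the image of $x^p x_{d+1}^r$, so the module-theoretic point you were worried about is immediate once the correct definition is in hand --- no extra verification is needed.
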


\begin{proof}
Let $K \subseteq \Z^{d+1}$ be the lattice points in the Gorenstein cone over $P \times \{1\}$. Denote by $\C[K]$ the associated affine semi-group algebra with $\mathbb{N}_0$-grading given by the exponent of $x_{d+1}$, viewing $\C[K] \subseteq \C[x_1^{\pm 1}, \ldots, x_d^{\pm 1}, x_{d+1}]$.
As in \cite[Section~4]{BorisovMavlyutov}, we let $f \in \C[K]_1$ be non-degenerate and $I \subseteq \C[K]$ the homogeneous ideal generated by the so called logarithmic derivatives of $f$.
Let moreover $J \subseteq \C[K]$ be the homogeneous ideal generated by all lattice points in the relative interior $K^\circ$ of $K$.
Then Borisov and Mavlyutov define $R_1(f,K)$ to be the image of $J$ in the quotient ring $\C[K]/I$, i.e., $R_1(f,K)$ is the homogeneous ideal $(I+J)/I$ of $\C[K]/I$.\\
Now, by \cite[Proposition~5.5]{BorisovMavlyutov}, $\ell^\ast_P(t)$ is the Hilbert series of $R_1(f,K)$.
Moreover, as $P$ is Gorenstein of codegree $r$, we have $K^\circ = (p,r) + K$, where $p \in (rP) \cap \Z^d$ is the unique interior lattice point of $rP$. Therefore, $R_1(f,K)$ is just the image of the principal ideal $(x^p x_{d+1}^r)$ in the quotient $\C[K]/I$. Hence, $R_1(f,K)$ is $0$ if and only if $x^p x_{d+1}^r \in I$, and otherwise the lowest degree of its non-zero homogeneous components is $r$ with $R_1(f,K)_r = \langle x^p x_{d+1}^r \rangle$ of vector space dimension $1$. This proves the first claim, and the second follows from reciprocity, Theorem~\ref{thm:lstar}(2).
\end{proof}

In dimensions $\leq 4$ it is a consequence of the reciprocity of $\ell^\ast_P(t)$ that for any lattice polytope $P$ either $P$ is thin or $\deg(\ell^\ast_P) = \deg(P)$. In higher dimensions, this property fails for non-Gorenstein lattice polytopes.

\begin{ex}
	Consider the full-dimensional lattice simplex $P \subseteq \R^5$ given as the convex hull $P = \conv(0, e_1, e_2, e_3, (0,1,1,2,0), (5,3,3,2,6))$. Then $\ell_P^\ast(t) = 4t^3$ while $h^\ast_P(t) = t^4 + 5t^3 + 4t^2 + t + 1$. In particular, $P$ is not thin but $\deg(\ell^\ast_P) < \deg(h^\ast_P) = \deg(P)$. This is the only such example among lattice simplices of dimension $5$ with lattice volume $\leq 15$. It was found using the database \cite{gabriele}. The computations were performed in \texttt{SageMath} with backend \texttt{Normaliz}.
\end{ex}

\begin{ex}
    Consider the full-dimensional lattice simplex $P \subseteq \R^5$ given as the convex hull $P = \conv(0, e_1, e_2, (1, 1, 2, 0, 0), (3, 5, 6, 8, 0), (1, 1, 0, 0, 2))$. Then $\ell_P^\ast(t) = t^3$ while $h^\ast_P(t) = 7t^3 + 19t^2 + 5t + 1$. So $\deg(\ell_P^\ast) = \deg(h_P^\ast)$ but the leading coefficient of $\ell_P^\ast$ is strictly smaller.
\end{ex}

\subsection{Thinness is invariant under duality}

It was noted in Lemma~\ref{lem:deg} that being trivially thin as well as being $g$-thin is invariant under duality of Gorenstein polytopes. Let us explain how this allows us to deduce that also thinness has this beautiful duality property:

\begin{cor}
Let $P$ be a Gorenstein polytope. Then $P$ is thin if and only if $P^\times$ is thin.\label{cor:dual}
\end{cor}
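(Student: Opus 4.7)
The plan is to invoke Theorem~\ref{thm:main}(ii), which gives a clean characterization of thinness for Gorenstein polytopes purely in terms of trivial thinness and the Gorenstein join structure, together with Lemma~\ref{lem:deg}, which tells us that trivial thinness is preserved when passing between the factors of a Gorenstein join and their duals. Since $P \mapsto P^\times$ is an involution on Gorenstein polytopes, it will be enough to show one direction.

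First I would note that if $P$ is trivially thin, then $P^\times$ is trivially thin as well: this is immediate from the fact that dimension and degree are invariant under Gorenstein duality (as recalled in the proof of Lemma~\ref{lem:deg}). So if $P$ falls into the first case of Theorem~\ref{thm:main}(ii), so does $P^\times$, and we are done.

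The remaining case is that $P = F \ast_{\Gor} G$ with, say, $F$ trivially thin. By the very definition of a Gorenstein join we have $P^\times = F^\ast \ast_{\Gor} G^\ast$, so $G^\ast$ appears as a factor of $P^\times$. Now Lemma~\ref{lem:deg}, applied to the join $P = F \ast_{\Gor} G$, says that $F$ and $G^\ast$ share the same dimension and the same degree; consequently, $G^\ast$ is also trivially thin. Thus $P^\times$ is a Gorenstein join one of whose factors is trivially thin, so by Theorem~\ref{thm:main}(ii) it is thin.

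The other direction follows by swapping the roles of $P$ and $P^\times$ and using $(P^\times)^\times = P$. There is no serious obstacle here: the content of the corollary is already packaged in Theorem~\ref{thm:main}(ii) and Lemma~\ref{lem:deg}; the only thing one needs to observe is the symmetry between the $F$-factor of $P$ and the $G^\ast$-factor of $P^\times$ within the join decomposition, which is exactly what Lemma~\ref{lem:deg} was designed to provide.
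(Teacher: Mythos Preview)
Your argument is correct and mirrors the paper's proof essentially verbatim: both reduce via Theorem~\ref{thm:main}(ii) to the Gorenstein-join case, pass to the dual decomposition $P^\times = F^\ast \ast_{\Gor} G^\ast$, and invoke Lemma~\ref{lem:deg} to see that $G^\ast$ inherits trivial thinness from $F$. The only difference is that you spell out the trivially-thin base case and the involution $(P^\times)^\times = P$ explicitly, whereas the paper handles these implicitly.
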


\begin{proof}

By Theorem~\ref{thm:main}(ii) we may assume that $P$ is a thin Gorenstein polytope such that $P$ is a Gorenstein join of faces $F$ and $G$ with $F$ being trivally thin. Hence, by Lemma~\ref{lem:deg} we also have $P^\times = F^* \ast_{\Gor} G^*$ with $G^*$ being trivially thin. Again, by Theorem~\ref{thm:main} this implies that $P^\times$ is thin.
\end{proof}

Having such a direct proof answers a question of Lev Borisov, who communicated to us that this statement might also be proven using vertex algebra techniques. 

In particular, as Theorem~\ref{thm:main} implies that there are only two choices for the degree of the $\ell^*$-polynomial of a Gorenstein polytope we see that its degree is also invariant under duality (as it holds for the degrees of the $h^*$-polynomial and the $g$-polynomial).

\begin{cor}
Let $P$ be a Gorenstein polytope. Then $\deg(\ell^*_P) = \deg(\ell^*_{P^\times})$.
\label{cor:dualdeg}
\end{cor}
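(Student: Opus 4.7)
The proof plan is to split into two cases based on whether $P$ is thin, and to use the dichotomy coming out of Theorem~\ref{thm:main}. Recall that the theorem ends with the stronger statement that if $P$ is a Gorenstein polytope that is \emph{not} thin, then $\deg(l_P^\ast) = \deg(P)$. Combined with the convention that the zero polynomial has degree zero, this forces the degree of $l^\ast_P$ to take only two possible values for Gorenstein $P$: either $0$ (exactly when $P$ is thin) or $\deg(P)$.

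First I would handle the thin case. If $P$ is thin, then by Corollary~\ref{cor:dual} so is $P^\times$, hence $l^\ast_P(t) = 0 = l^\ast_{P^\times}(t)$, and in particular $\deg(l^\ast_P) = 0 = \deg(l^\ast_{P^\times})$ under the degree convention for the zero polynomial.

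Next I would handle the non-thin case. If $P$ is not thin, then by Corollary~\ref{cor:dual} neither is $P^\times$. Applying the final clause of Theorem~\ref{thm:main} to each of $P$ and $P^\times$ gives
\begin{equation*}
    \deg(l^\ast_P) = \deg(P) \quad \text{and} \quad \deg(l^\ast_{P^\times}) = \deg(P^\times).
\end{equation*}
Since duality of Gorenstein polytopes preserves degree (as already invoked in Lemma~\ref{lem:deg} and standard in the setting of \cite{batyrev2008combinatorial}), we conclude $\deg(l^\ast_P) = \deg(P) = \deg(P^\times) = \deg(l^\ast_{P^\times})$.

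There is essentially no obstacle here beyond quoting the preceding results: the entire content of Corollary~\ref{cor:dualdeg} is already packaged inside Theorem~\ref{thm:main} (which collapses the possible degrees of $l^\ast_P$ to the two values $0$ and $\deg(P)$) together with Corollary~\ref{cor:dual} (which transports the dichotomy across duality) and the elementary invariance of $\deg(P)$ under the Gorenstein duality $P \leftrightarrow P^\times$.
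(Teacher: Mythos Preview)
Your proposal is correct and matches the paper's own reasoning essentially verbatim: the paper deduces Corollary~\ref{cor:dualdeg} by noting that Theorem~\ref{thm:main} forces $\deg(l^\ast_P)\in\{0,\deg(P)\}$, that Corollary~\ref{cor:dual} makes the thin/non-thin dichotomy dual-invariant, and that $\deg(P)=\deg(P^\times)$.
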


\begin{ex}
The reader should be aware that the local $h^*$-polynomials of a Gorenstein polytope $P$ and its dual $P^\times$ may differ. For instance, for $P=[-1,1]^3$ we have $\ell^*_P(t) = t + 17 t^2 + t^3$ and $\ell^*_{P^\times}(t)=t + 3 t^2 + t^3$.
\end{ex}

\subsection{Thin Gorenstein simplices}

For the special case of Gorenstein simplices, we can answer the original question in \cite{GKZ94} about classifying thin simplices.

\begin{cor}
Let $P$ be a Gorenstein simplex. Then $P$ is thin if and only if $P$ is a lattice pyramid.\label{cor:gorsimplex}
\end{cor}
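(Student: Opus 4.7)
The plan is to derive the corollary as an almost immediate consequence of the equivalence between (i) and (iii) in Theorem~\ref{thm:main}, using that simplices have trivial $g$-polynomial. The direction ``lattice pyramid $\Rightarrow$ thin'' is already contained in Proposition~\ref{prop:pyr}, so the only work is in the converse.

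For the converse, I would start with a thin Gorenstein simplex $P$ of dimension $d$ and apply Theorem~\ref{thm:main}(iii): either $P$ is itself $g$-thin, or there is a decomposition $P = F \ast_{\Gor} G$ in which $\deg(l^\ast_F) = \deg(F)$ and $G$ is $g$-thin. The crucial preliminary observation is that for any lattice simplex $Q$, the closed interval $[\emptyset, Q]$ in the face poset is Boolean, so Remark~\ref{simplicial} gives $g_Q(t) = g_{[\emptyset,Q)}(t) = 1$, whence $\deg(g_Q) = 0$. Therefore the condition ``$Q$ is $g$-thin'' collapses to $\deg(Q) = 0$, i.e., $Q$ is a unimodular simplex; in particular $Q$ is a lattice pyramid (any vertex serves as an apex, with the convention that a point is a pyramid).

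Armed with this, both cases dispatch quickly. In the first case $P$ itself is $g$-thin, so $P$ is a unimodular simplex and hence a lattice pyramid. In the second case $G$ is a face of $P$, therefore a simplex, and the $g$-thinness of $G$ forces $G$ to be a unimodular simplex, hence a lattice pyramid. Then Corollary~\ref{cor:pyr}, applied to the Gorenstein join $P = F \ast_{\Gor} G$ whose factor $G$ is already a pyramid, promotes this to the conclusion that $P$ itself is a lattice pyramid.

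There is no real obstacle in this argument; the heavy lifting is all absorbed into Theorem~\ref{thm:main}(iii), whose $g$-thin formulation is precisely engineered to interact well with simplices via the triviality of $g_Q$. Notably, no induction on dimension is required and we do not have to analyze trivially thin Gorenstein simplices directly, which would otherwise seem to be the delicate step.
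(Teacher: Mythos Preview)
Your proof is correct and follows essentially the same route as the paper: both invoke Theorem~\ref{thm:main}(iii), use that $g_Q(t)=1$ for a simplex $Q$ to reduce $g$-thinness to $\deg(Q)=0$ (hence $Q$ unimodular), and then finish with Corollary~\ref{cor:pyr} in the Gorenstein-join case. The only cosmetic difference is that you state the easy direction via Proposition~\ref{prop:pyr} explicitly, which the paper leaves implicit.
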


\begin{proof}

Let $P$ be thin. If $P$ is $g$-thin, then $\deg(P)=\deg(g_P)=0$ as $P$ is a simplex. Hence, $P$ is a unimodular simplex, in particular, a lattice pyramid.

Otherwise, Theorem~\ref{thm:main}(iii) implies that there are faces $F$ and $G$ of $P$ such that $P=F \ast_{\Gor} G$ with $G$ $g$-thin. As $G$ is also a simplex, the previous consideration shows that $G$ is a unimodular simplex, thus, a lattice pyramid. Hence, Corollary~\ref{cor:pyr} implies that $P$ is also a lattice pyramid.
\end{proof}

In particular, if a Gorenstein simplex $P$ satisfies $\dim(P) \ge 2 \deg(P)$, then $P$ is a lattice pyramid. This statement can also be deduced from \cite[Cor.~3.10(2)]{Adjunction}.

\medskip
\bibliographystyle{alpha}
\bibliography{l-star_references}

\end{document}